\DeclarePairedDelimiter\floor{\lfloor}{\rfloor}
\newtheorem{cor}{Corollary}
\newtheorem{thm}{Theorem}
\newtheorem{lem}{Lemma}
\newtheorem{definition}{Definition}
\newtheorem{prop}{Proposition}
\newtheorem*{thm*}{Theorem}
\title{Computationally Checking if a Reaction Network is Monotone or Non-expansive}
\author{Alon Duvall}
\date{}
\begin{document}

\maketitle

\begin{abstract}
   We present a systematic procedure for testing whether reaction networks exhibit non-expansivity or monotonicity. This procedure identifies explicit norms under which a network is non-expansive or cones for which the system is monotone—or provides proof that no such structures exist. Our approach reproduces known results, generates novel findings, and demonstrates that certain reaction networks cannot exhibit monotonicity or non-expansivity with respect to any cone or norm. Additionally, we establish a duality relationship which states that if a network is monotone, so is its dual network.
\end{abstract}

\section{Introduction}

Reaction networks, a class of differential equations, are widely used to model biological and chemical phenomena. Understanding their qualitative behavior is crucial, and properties such as monotonicity and contractivity can offer significant insights into stability, convergence, and long-term dynamics.

The study of monotonicity and contractivity has rich theoretical foundations. Monotonicity is well-developed in the context of control theory \cite{1235373} and synthetic biology \cite{10.1371/journal.pcbi.1004881}, while contractivity has found applications in areas like Riemannian geometry \cite{SIMPSONPORCO201474} and data-driven control \cite{hu2024enforcing}. Connections between the two have been explored in \cite{8854175, KAWANO2022105358,jafarpour2023monotonicity}, among others.

Monotonicity and contractivity have been extensively studied in reaction networks. Past works have characterized classes of monotone reaction networks \cite{angeligraphtheoretic, doi:10.1080/14689360802243813, BANAJI20131359, doi:10.1137/120898486}, while others have explored contractivity properties \cite{Electrontransfernetworks, duvall2024interplay} or piecewise-linear Lyapunov functions \cite{7097666, BLANCHINI20142482}. These properties provide powerful tools for understanding global behavior, including stability and convergence. Despite these advances, there remains a need for systematic tools to test whether reaction networks possess a property like monotonicity. 

While algorithmic methods for testing non-expansivity exist, analogous tools for monotonicity are less developed. Addressing this gap, we present a systematic approach \footnote{Code available online at https://github.com/alon-duvall/testing-for-monotonicity-and-non-expansivity-in-reaction-networks} to determine whether a reaction network is monotone with respect to any constant cone. By leveraging geometric insights, our method identifies constraints that these properties impose. As a byproduct, we also develop a new algorithm to test whether a reaction network is non-expansive for any norm. We also establish a duality theorem, demonstrating that a reaction network is monotone with respect to a cone if and only if its dual system is. Using our approach, we resolve an open question from \cite{doi:10.1080/14689360802243813}, proving that a specific reaction network is not monotone with respect to any cone.

\section{Notation}

The following is some basic notation that we make use of. For a vector $x$, the transpose is $x^{\top}$. For a set $U$ the boundary is indicated by $\mbox{bd}(U)$. We define a cone to be any set $K$ such that if $k\in K$ and $\alpha > 0$ then $\alpha k \in K$. Unless stated otherwise, we assume all cones are closed, pointed, and convex. Recall that a cone is \textit{pointed} if $K \cap -K = 0$. For a cone $K$ we indicate the dual cone by $K^*$. Recall for a cone $K$ the dual cone is the set of vectors satisfying $K^* = \{x \in \mathbb{R}^n | x^{\top} k \geq 0 \,\, \forall \,\, k \in K \}$. For a vector $k$ in a cone $K$ we let $k^* = \{x \in K^*| x^{\top} k = 0 \}$. For a cone $K$ we refer to a vector $k$ as an \textit{boundary vector} if $k \in \mbox{bd}(K)$. We say $k$ is an \textit{extreme} vector if it is not strict convex combination of some other vectors in $K$ (i.e., $k \neq \lambda_i k_i$ where $1 = \sum_i \lambda_i$ and for all $i$ $\lambda_i \neq 1$, and $k_i \in K$ is a set of unique vectors). For a vector $x \in \mathbb{R}^n$ we indicate the $i$'th coordinate of $x$ by $(x)_i$. We set $\mathbb{R}_{\geq}^n = \{x \in \mathbb{R}^n| (x)_i \geq 0 \, \forall \, i \}.$ For a set $S$ we indicate its affine span by $\mbox{Aff}(S)$. The dimension of a convex set will be the same as the dimension of its affine span. We indicate the interior of a set $U$ by $\mbox{int}(U)$. For a function $f:\mathbb{R}^n \rightarrow \mathbb{R}$ we indicate its gradient by $\partial f = [\frac{\partial f}{\partial (x)_1},\frac{\partial f}{\partial (x)_2},...,\frac{\partial f}{\partial (x)_n}]$. If $f$ is a function $\mathbb{R}^n \rightarrow \mathbb{R}^n$ we indicate its Jacobian by $J_f$. We say a cone $K$ is generated by some vectors $k_i$ if each vector $k \in K$ can be written as $k = \sum_i \alpha_i k_i$ for some $\alpha_i \geq 0$. We say a cone $K$ is \textit{polyhedral} if it is generated by a finite set of vectors. We indicate the image of a linear map $\Gamma$ by $\mbox{Im}(\Gamma)$ (we sometimes think of a matrix $\Gamma$ as a linear map sending $x$ to $\Gamma x$). For two sets $A,B$ we indicate the \textit{set difference} by $A-B = \{x| \exists a \in A, \exists b \in B \, \, \mbox{such that} \,\, x = a-b \}$. We say a graph is \textit{connected} if between any two nodes of the graph there exists a path (directed or otherwise). We say a directed graph is \textit{strongly connected} if between any two nodes there exists a directed path. A \textit{strongly connected component} of a directed graph is a subgraph $G'$ of a graph $G$ such that $N$ is strongly connected, and it is not a subgraph of any other strongly connected graph $G''$ such that $G' \subseteq G'' \subseteq G$. For a dynamical system $\dot{x} = f(x)$ we indicate the forward time evolution mapping by $\phi_t(.)$

\section{Background}

A \textit{reaction network} is a parameterized family of dynamical systems. A reaction network comprises a set of \textit{species}, which we identify with coordinates in $\mathbb{R}^n$ (for $n$ species). It also consists of a set of reactions, represented as two nonnegative vectors in $\mathbb{R}^n$: one for the \textit{reactant complex} and one for the \textit{product complex}. Each reaction is classified as either \textit{reversible} or \textit{irreversible}. For example, if the reactant complex is $[a_1, a_2, \dots, a_n]$ and the product complex is $[b_1, b_2, \dots, b_n]$, and the species are denoted $A_i$, an irreversible reaction can be represented graphically as:
\[
\sum_i a_i A_i \Rightarrow \sum_i b_i A_i.
\]
For a reversible reaction, we use:
\[
\sum_i a_i A_i \Leftrightarrow \sum_i b_i A_i.
\]

We focus on \textit{non-catalytic reactions}, meaning that $a_i b_i = 0$ for all $i$. In this case, if the reaction’s reversibility is known, the reaction can be unambiguously represented by the vector $[b_1, b_2, \dots, b_n] - [a_1, a_2, \dots, a_n]$. The \textit{reactants} of a reaction are indexed by $A \subset \{i\}_{i=1}^n$ such that $a_j > 0$ for $j \in A$, while the \textit{products} are indexed by $j$ where $b_j > 0$. We refer to this difference as the \textit{reaction vector}. The \textit{stoichiometric matrix} $\Gamma$ is a matrix whose columns are the reaction vectors, and we may refer to the reaction network as $\Gamma$. Each column vector of $\Gamma$ is written as $\Gamma_i$.

Each reaction $\Gamma_i$ is assigned a \textit{reaction rate}, denoted $R_i$. This reaction rate is a function $\mathbb{R}_{\geq}^n \rightarrow \mathbb{R}$, which we assume is $C^1$ on the interior of $\mathbb{R}_{\geq}^n$. Reaction rates are governed by the assumptions of \textit{general kinetics}. Assume reactants are indexed by $A \subseteq \{i\}_{i=1}^n$ and products are indexed by the set $B \subseteq \{i\}_{i=1}^n$. For reversible reactions, the assumptions are as follows:

\begin{enumerate}
    \item We have that $(\Gamma_i)_j(\partial R_i(x))_j < 0$ for $x \in \mathbb{R}_{\geq}^n$ such that $(x)_l > 0$ for all $l \in A$ and all $j$ such that $(\Gamma_i)_j < 0$
    \item We have that $(\Gamma_i)_j(\partial R_i(x))_j < 0$ for $x \in \mathbb{R}_{\geq}^n$ such that $(x)_l > 0$ for all $l \in B$ and for all $j$ such that $(\Gamma_i)_j > 0$
    \item If $(\Gamma_i)_j = 0$ then $(\partial R_i(x))_j = 0$ for $x \in \mathbb{R}_{\geq}^n$
    \item If $(\Gamma_i)_j \neq 0$ and $(x)_j = 0$ then $ (R_i(x)) (\Gamma_i)_j \geq 0 $
\end{enumerate} 
For irreversible reactions with reactants from the set $A$, the assumptions are:

\begin{enumerate}
    \item We have that $(\Gamma_i)_j(\partial R_i(x))_j < 0$ for $x \in \mathbb{R}_{\geq}^n$ such that $(x)_l > 0$ for all $l \in A$ and $(\Gamma_i)_j < 0$
    \item If $(\Gamma_i)_j \geq 0$ then $(\partial R_i(x))_j = 0$ for $x \in \mathbb{R}_{\geq}^n$
    \item If $(\Gamma_i)_j < 0$ and $(x)_j = 0$ then $ R_i(x)  = 0 $
\end{enumerate}

These assumptions closely follow the kinetics assumptions described in \cite{7097666} and are similar to \textit{weakly monotonic kinetics} as described in Definition 4.5 of \cite{SHINAR201292}. They formalize intuitive principles: reactions proceed faster with more reactants, reaction rates depend only on the reactants, and a reaction cannot proceed if a reactant is absent.

The species dynamics of a reaction network $\Gamma$ are governed by the equation $\dot{x} = \Gamma R(x)$, defined on the set $\mathbb{R}^n_{\geq}$. The \textit{stoichiometric compatibility class} of a point $x$ is the set $\{x + \mathrm{Im}(\Gamma)\} \cap \mathbb{R}_{\geq}^n$. A network $\Gamma'$ is a subnetwork of $\Gamma$ if every reaction (i.e., column vector) of $\Gamma'$ is also a reaction of $\Gamma$. The \textit{R-graph} of a reaction network is a directed graph where nodes represent reactions, and there is an arrow from $R_i$ to $R_j$ if the species involved in $R_i$ can influence the reaction rate of $R_j$. Only connected R-graphs are considered, as disconnected ones can be treated as separate reaction networks. If an irreversible reaction $\Gamma_i$ follows \textit{power-law kinetics}, its reaction rate is expressed as $R_j = k_i \prod_i (x)_i^{\alpha_i}$, where $\alpha_i > 0$ when the $i$'th index of $\Gamma_j$ is a reactant, and $\alpha_i = 0$ otherwise.

Suppose a dynamical system $\dot{x} = f(x)$ is defined on a convex, forward-invariant set $C \subseteq \mathbb{R}^n$. The system is said to be \textit{monotone} if there exists a partial ordering on $C$ such that for any $x, y \in C$, if $x \leq y$, then $\phi_t(x) \leq \phi_t(y)$ for all $t \geq 0$. A cone $K \subseteq C$ induces a natural partial ordering on $C$ defined by $x \leq y$ if and only if $y - x \in K$. Throughout this paper, we will restrict our focus to partial orderings induced by cones.

A system is \textit{non-expansive} if there exists a norm $| \cdot |$ on $C$ such that for all $t \geq 0$ and all $x, y \in C$ where $x \neq y$, the inequality $ |\phi_t(x) - \phi_t(y)| \leq |x - y| $ holds. Similarly, the system is \textit{weakly contractive} if $|\phi_t(x) - \phi_t(y)| < |x - y|$ for all such $x, y$, and $t$.

This work focuses on testing reaction networks for their monotonicity and non-expansivity properties. Specifically, we address the following questions:
\begin{enumerate}
    \item Does there exist a norm under which the system $\dot{x} = \Gamma R(x)$ is non-expansive, regardless of the choice of general kinetics?
    \item Does there exist a cone $K$ such that the system $\dot{x} = \Gamma R(x)$ is monotone, independent of the choice of general kinetics?
\end{enumerate}

\section{Some preliminary results}

To determine whether a network is monotone we will rely on Proposition 1.5 in \cite{WALCHER2001543}. We have

\begin{thm}
\label{thm:monotone}
    \cite{WALCHER2001543} A system $\dot{x} = f(x)$ with phase space $\mathbb{R}_{\geq}^n$ is monotone with respect to a proper, pointed and convex cone $K$ iff for any $x \in \mathbb{R}_{\geq}^n$ and for all $k_1\in \mbox{bd}(K)$ and $k_2 \in K^*$ such that $k_2^{\top} k_1  = 0$ we have that $k_2^{\top} \mathcal{J}_f k_1 \geq 0$.
\end{thm}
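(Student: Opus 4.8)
The plan is to run the classical chain of equivalences relating $K$-monotonicity of the flow, forward invariance of $K$ under the variational (linearized) flow, a Nagumo-type subtangentiality condition, and its dual reformulation via supporting functionals of $K$; the stated inequality is exactly this last reformulation. The first step is to show that $\dot x = f(x)$ is monotone with respect to $K$ on $\mathbb{R}^n_{\ge}$ if and only if, for every $x \in \mathbb{R}^n_{\ge}$ and every $v \in K$, the solution $w(t) = D\phi_t(x)v$ of the variational equation $\dot w = J_f(\phi_t(x))\,w$, $w(0) = v$, stays in $K$ for all $t \ge 0$. For the ``if'' direction, convexity of $\mathbb{R}^n_{\ge}$ gives, whenever $y - x \in K$, the identity $\phi_t(y) - \phi_t(x) = \int_0^1 D\phi_t\big(x + s(y-x)\big)(y-x)\,ds$; each integrand lies in $K$ by hypothesis, and since $K$ is closed and convex, so does the integral. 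For ``only if,'' fix $x$ in the interior of $\mathbb{R}^n_{\ge}$ and $v \in K$; then $x + \varepsilon v \ge x$ for small $\varepsilon > 0$, so $\varepsilon^{-1}\big(\phi_t(x+\varepsilon v) - \phi_t(x)\big) \in K$, and letting $\varepsilon \downarrow 0$ and using closedness of $K$ yields $D\phi_t(x)v \in K$; the boundary points of $\mathbb{R}^n_{\ge}$ then follow by continuity of $x \mapsto D\phi_t(x)$.

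\emph{Nagumo subtangentiality.} Because the variational flow is linear, invariance of $w(t)$ in $K$ is precisely the statement that the closed convex set $K$ is forward invariant under the nonautonomous linear ODE $\dot w = J_f(\phi_t(x))\,w$. By Nagumo's invariance theorem, in its time-dependent form, this holds if and only if $J_f(\phi_t(x))\,k_1 \in T_K(k_1)$ for all $t \ge 0$ and all $k_1 \in \mathrm{bd}(K)$, where $T_K(k_1)$ denotes the tangent cone of $K$ at $k_1$. Since $\phi_t(x) \in \mathbb{R}^n_{\ge}$ whenever $x \in \mathbb{R}^n_{\ge}$, the sufficiency direction follows from requiring $J_f(x)\,k_1 \in T_K(k_1)$ for all $x \in \mathbb{R}^n_{\ge}$; for necessity, using $x$ itself as initial datum and differentiating $w$ at $t=0$ forces $J_f(x)\,k_1 \in T_K(k_1)$ directly. (At an interior vector $k_1$ the tangent cone is all of $\mathbb{R}^n$, so only $k_1 \in \mathrm{bd}(K)$ contribute.)

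\emph{Dualization.} It remains to rewrite $J_f(x)\,k_1 \in T_K(k_1)$ as the claimed inequality. For a closed convex cone $K$ and $k_1 \in \mathrm{bd}(K)$, the normal cone is $N_K(k_1) = \{y : y^\top(k - k_1) \le 0 \ \forall k \in K\}$; testing against $k = 0$ and $k = 2k_1$ shows $y^\top k_1 = 0$, whence $-y \in K^*$, so $N_K(k_1) = -k_1^* = \{-k_2 : k_2 \in K^*,\ k_2^\top k_1 = 0\}$. Since the tangent and normal cones of a closed convex set are mutually polar, $T_K(k_1) = \{v : v^\top y \le 0 \ \forall y \in N_K(k_1)\} = \{v : k_2^\top v \ge 0 \ \text{for all } k_2 \in K^* \text{ with } k_2^\top k_1 = 0\}$. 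Therefore $J_f(x)\,k_1 \in T_K(k_1)$ is exactly the condition $k_2^\top J_f(x)\,k_1 \ge 0$ for all $k_1 \in \mathrm{bd}(K)$ and all $k_2 \in K^*$ with $k_2^\top k_1 = 0$, which is the statement of the theorem.

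The main obstacle is the Nagumo/Brezis invariance step: proving sufficiency, i.e.\ that the merely nonstrict subtangential inequality $J_f(x)\,k_1 \in T_K(k_1)$ actually prevents trajectories of the linear variational equation from leaving $K$, and handling this carefully when $K$ has nonempty boundary while the initial vector $v$ may sit anywhere in $K$. There are also minor technical points — the intervals of existence of $\phi_t$, and the reduction of ``for all states reached along trajectories'' to ``for all $x \in \mathbb{R}^n_{\ge}$'' — but these are routine once the invariance theorem is in hand.
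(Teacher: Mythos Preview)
The paper does not prove this theorem; it simply quotes it as Proposition~1.5 of \cite{WALCHER2001543} and then uses it as a black box. So there is no in-paper argument to compare against. Your proposal is the standard proof of the cross-positivity characterization: reduce monotonicity of the flow to $K$-invariance of the variational equation via the integral formula and a differentiation argument, then apply Nagumo--Brezis for invariance of a closed convex set, and finally dualize the tangent-cone condition using $N_K(k_1) = -k_1^*$. The chain of equivalences is correct, and your identification of the Nagumo step as the only substantive ingredient is accurate; everything else is routine convex analysis.
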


The inequality in Theorem \ref{thm:monotone} is related to the notion of cross-positivity. A matrix $A$ is \textit{cross-positive} for a cone $K$ if it satisfies $k_2^{\top} A k_1 \geq 0$ for all $k_1 \in K$ and $k_2 \in K^*$ such that $k_2^{\top} k_1 = 0$ \cite{a74ff163-269a-3282-9d4a-078934afa3dc}. In other words, Theorem \ref{thm:monotone} asks that $J_f$ be cross-positive with respect to $K$.

By restricting to an invariant subspace, we can relax the requirement that the cone be proper. If a system $\dot{x} = f(x)$ has an invariant subspace $S$, then we define the restricted system to be the space $S$ evolving in time according to $\dot{x} = f(x)$. We only need the cone to be proper in an invariant subspace. In particular, we have:

\begin{cor}
    Suppose there exists a linear space $S$ such that for all $j \in \mathbb{R}^n$ that $\{S+j\} \cap \mathbb{R}^n_{\geq}$ is forward invariant for our system. Suppose we are given a cone $K \subset S$ of the same dimension as $S$ such that for any $x \in \mathbb{R}^n_{\geq}$ and for all $k_1\in K$ and $k_2 \in K^*$ such that $k_2^{\top} k_1  = 0$ we have that $k_2^{\top} \mathcal{J}_f(x) k_1 \geq 0$, then our system restricted to any set of the form $\{S+j\} \cap \mathbb{R}^n_{\geq}$ is monotone with respect to $K$.  
\end{cor}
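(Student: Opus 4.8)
The plan is to reduce the statement to Theorem~\ref{thm:monotone}, applied \emph{inside} the invariant subspace. Fix $j\in\mathbb{R}^n$ and write $A:=\{S+j\}\cap\mathbb{R}^n_\geq$ for the phase space of the restricted system; since any two points $x,y\in A$ satisfy $y-x\in S$, the order $x\leq_K y$ (meaning $y-x\in K$) is well-defined on $A$ because $K\subseteq S$. The first step is to record that forward invariance forces the vector field to be tangent to $S$ along $A$: for $x\in A$ the curve $\phi_t(x)$ stays in $S+j$, so $\phi_t(x)-x\in S$ and hence $f(x)=\lim_{t\to 0^+}\tfrac{1}{t}(\phi_t(x)-x)\in S$; differentiating in directions $v\in S$ then shows that $J_f(x)$ maps $S$ into $S$ for $x$ in $A$. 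Thus the restricted system is a genuine dynamical system on the convex, forward-invariant set $A$ whose linearization at each point is an endomorphism of the direction space $S$.

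The second step is to translate the cross-positivity hypothesis, stated with the ambient dual cone $K^*\subseteq\mathbb{R}^n$, into the intrinsic statement in $S$. Let $K^*_S:=\{y\in S: y^\top k\geq 0\ \text{for all}\ k\in K\}$ be the dual of $K$ relative to $S$. Because $K\subseteq S$, every $k_2\in K^*$ splits uniquely as $k_2=y_1+y_2$ with $y_1\in K^*_S$ and $y_2\in S^\perp$, i.e.\ $K^*=K^*_S\oplus S^\perp$. For $k_1\in K$ we then have $k_2^\top k_1=y_1^\top k_1$, and, since $J_f(x)k_1\in S$ by the first step, also $k_2^\top J_f(x)k_1=y_1^\top J_f(x)k_1$. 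Hence the hypothesis is equivalent to: for every $x\in A$, the operator $J_f(x)|_S$ is cross-positive with respect to $K$ regarded as a cone in $S$. Two remarks make this a clean application of Theorem~\ref{thm:monotone}: first, $K$ has the same dimension as $S$ and is closed, convex, and pointed by our standing conventions, so $K$ is a proper cone in $S$; second, it suffices to check the inequality for $k_1\in\mbox{bd}(K)$, since for $k_1\in\mbox{int}(K)$ the only $k_2\in K^*$ with $k_2^\top k_1=0$ is $k_2=0$, for which the inequality is trivial.

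Finally, apply Theorem~\ref{thm:monotone} in the space $S$ with phase space $A$: its hypotheses (proper cone, $C^1$ vector field, cross-positive Jacobian at the relevant boundary configurations) now hold for the restricted system, so it is monotone with respect to $K$ on $A$, which is exactly the claim. The one point that needs care, and which I expect to be the main obstacle, is that Theorem~\ref{thm:monotone} is stated with phase space $\mathbb{R}^n_\geq$, whereas here $A$ is a (possibly non-orthant) convex, forward-invariant set. This is not a genuine obstruction: Walcher's proof only uses convexity and forward invariance of the phase space — the key mechanism being that for $x\leq_K y$ in the phase space the segment $\{(1-s)\phi_t(x)+s\phi_t(y):s\in[0,1]\}$ stays in the phase space, so that $z(t):=\phi_t(y)-\phi_t(x)$ obeys $\dot z=M(t)z$ with $M(t)=\int_0^1 J_f\big((1-s)\phi_t(x)+s\phi_t(y)\big)\,ds$ a cross-positive matrix (the cross-positive matrices for $K$ form a convex cone, and they are preserved under averaging), after which a Nagumo-type differential inequality keeps $z(t)\in K$ for all $t\geq 0$. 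So the remaining work is essentially expository: either invoke Walcher's criterion in the form allowing an arbitrary convex forward-invariant phase space, or reproduce this short cross-positive-linear-flow argument directly on $A$.
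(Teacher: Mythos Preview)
Your proposal is correct and follows the same approach as the paper, which simply says to restrict the system to $S+j$ and apply Theorem~\ref{thm:monotone}. In fact you have supplied far more detail than the paper does: the tangency of $f$ to $S$, the reduction from $K^*$ to $K^*_S$, and the observation that Walcher's criterion really only needs a convex forward-invariant phase space are all points the paper leaves implicit in its one-line proof.
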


\begin{proof}
    This is an immediate extension of Theorem \ref{thm:monotone}. We simply need to restrict the system to $S+j$ and apply the previous theorem.
\end{proof}
We can manipulate the expression in this theorem to get a more geometric statement, specifically for reaction networks. First, we will define a few regions:

\begin{definition}
    Given a reversible reaction vector $v \in \mathbb{R}^n$ we define $Q_1(v) = \{x \in \mathbb{R}^n| (x)_i(v)_i \geq 0 \ \forall \ 1 \leq i \leq n \}$. If we have a irreversible reaction vector $v$, then we define $Q_1(v) = \{x \in \mathbb{R}^n| (x)_i(v)_i \geq 0 \ \forall \ 1 \leq i \leq n \mbox{ such that } (v)_i < 0 \}$  We define $Q_1^+(v) = Q_1(v) \setminus Q_1(-v)$, $Q_1^-(v) = Q_1(-v) \setminus Q_1(v)$ and $Q_2(v) = Q_1(v) \cap Q_1(-v)$.
\end{definition}

\begin{lem}
\label{lem:signs on dual faces}
    Suppose we have a reaction network with only one reaction and the corresponding system $\dot{x} = \Gamma_1 R(x)$. Then our network is monotone with respect to a cone $K$ iff 

    \begin{enumerate}
        \item When $k \in Q_1^+(\Gamma_1) \cap \mbox{bd}(K)$ then for all $k' \in k^*$ we have that $ \Gamma_1^{\top} k'  \leq 0$. 
        \item When $k \in Q_1^-(\Gamma_1) \cap \mbox{bd}(K)$ then for all $k' \in k^*$ we have that $ \Gamma_1^{\top} k' \geq 0$.
        \item When $k \not\in Q_1^-(\Gamma_1) \cup Q_1^+(\Gamma_1) \cup Q_2(\Gamma_1)$ and $k \in \mbox{bd}(K)$ then for all $k' \in k^*$ we have that $\Gamma_1^{\top} k' = 0$.
    \end{enumerate}
\end{lem}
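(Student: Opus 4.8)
The plan is to turn the cross-positivity test of Theorem~\ref{thm:monotone} into a statement about the sign of a single scalar, exploiting that a one-reaction system has a rank-one Jacobian. Writing $f(x)=\Gamma_1 R(x)$ with $R\colon\mathbb{R}^n_{\ge}\to\mathbb{R}$, the Jacobian is the outer product $\mathcal{J}_f(x)=\Gamma_1\,\partial R(x)$ of the column $\Gamma_1$ with the gradient row $\partial R(x)$, so
\[
k_2^{\top}\mathcal{J}_f(x)\,k_1=(\Gamma_1^{\top}k_2)\,\bigl(\partial R(x)\,k_1\bigr),
\]
where $\partial R(x)\,k_1$ denotes the scalar $\sum_j(\partial R(x))_j(k_1)_j$ (the gradient row applied to $k_1$). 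Applying Theorem~\ref{thm:monotone} for every choice of general kinetics $R$, the network is monotone with respect to $K$ if and only if $(\Gamma_1^{\top}k')\,\bigl(\partial R(x)\,k_1\bigr)\ge 0$ for all such $R$, all $x\in\mathbb{R}^n_{\ge}$, all $k_1\in\mathrm{bd}(K)$, and all $k'\in k_1^*$. Everything then reduces to the possible signs of $\partial R(x)\,k_1$.

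I would next record the sign dictionary the general-kinetics assumptions impose. Assumptions (1)--(3) force $(\partial R(x))_j$ to have sign $-\operatorname{sign}((\Gamma_1)_j)$ on the interior of $\mathbb{R}^n_{\ge}$ (and to vanish identically when $(\Gamma_1)_j=0$, and, for irreversible reactions, also when $(\Gamma_1)_j>0$), so each term $(\partial R(x))_j(k_1)_j$ has sign $-\operatorname{sign}((\Gamma_1)_j(k_1)_j)$. Reading this against the definition of $Q_1$: on $Q_1(\Gamma_1)$ every term is $\le 0$, so $\partial R(x)\,k_1\le 0$, with strict inequality at interior points exactly when some relevant coordinate is strictly engaged, i.e.\ precisely when $k_1\in Q_1^+(\Gamma_1)$; symmetrically $\partial R(x)\,k_1\ge 0$ on $Q_1(-\Gamma_1)$ and is $>0$ at interior points of $Q_1^-(\Gamma_1)$; on $Q_2(\Gamma_1)$ the relevant terms vanish and $\partial R(x)\,k_1\equiv 0$; and if $k_1$ avoids $Q_1^{\pm}(\Gamma_1)\cup Q_2(\Gamma_1)$ there are coordinates $p,q$ with $(\Gamma_1)_p(k_1)_p>0>(\Gamma_1)_q(k_1)_q$, so $\partial R(x)\,k_1$ contains a term that is always negative and a term that is always positive. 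For this last case I would exhibit, for a prescribed nonzero coordinate of $\Gamma_1$, a one-parameter family of power-law-type rates (a reactant monomial minus a product monomial, the second term dropped when the reaction is irreversible) whose gradient at $x^{\ast}=(1,\dots,1)$ is dominated by that coordinate (raise its exponent, shrink the others); choosing $p$ makes $\partial R(x^{\ast})\,k_1<0$, choosing $q$ makes it $>0$.

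Sufficiency then follows by substitution: granting conditions (1)--(3) and evaluating at an interior $x$, in each case the product $(\Gamma_1^{\top}k')\bigl(\partial R(x)\,k_1\bigr)$ is a product of two sign-compatible factors — nonpositive times nonpositive on $Q_1^+$, nonnegative times nonnegative on $Q_1^-$, anything times zero on $Q_2$ and on the residual region — hence nonnegative; the boundary of $\mathbb{R}^n_{\ge}$ follows by continuity, and Theorem~\ref{thm:monotone} delivers monotonicity. For necessity, monotonicity forces $(\Gamma_1^{\top}k')\bigl(\partial R(x)\,k_1\bigr)\ge 0$ for every choice of general kinetics: fixing any one valid $R$ and an interior $x$ shows $\partial R(x)\,k_1<0$ when $k_1\in Q_1^+(\Gamma_1)\cap\mathrm{bd}(K)$, which forces $\Gamma_1^{\top}k'\le 0$ (condition 1), and symmetrically condition 2; and when $k_1$ avoids $Q_1^{\pm}(\Gamma_1)\cup Q_2(\Gamma_1)$ the two dominating-coordinate constructions realize both signs of $\partial R(x)\,k_1$, so $\Gamma_1^{\top}k'$ can be neither positive nor negative, giving condition 3.

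The step I expect to be the real obstacle is this realizability construction: producing, for each relevant coordinate of $\Gamma_1$, a genuinely valid general kinetics (meeting all of (1)--(4)) whose gradient at some interior point is dominated by that coordinate, and checking it works uniformly for reversible and irreversible reactions. A secondary technical point is keeping careful track of which coordinates of $\partial R$ are forced to vanish — important for irreversible reactions, where $\partial R$ is supported only on the reactant coordinates, and for directions $k_1$ whose active coordinates vanish, which must be shown either to impose no constraint or to fall into $Q_2(\Gamma_1)$. The remaining bookkeeping — extending by continuity to $\mathrm{bd}(\mathbb{R}^n_{\ge})$, and matching the quantifier ``$k'\in k_1^*$'' with ``$k_2\in K^*$ with $k_2^{\top}k_1=0$'' in Theorem~\ref{thm:monotone} via the definition $k_1^*=\{x\in K^*:x^{\top}k_1=0\}$ — is routine.
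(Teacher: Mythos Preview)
Your proposal is correct and follows essentially the same approach as the paper: compute the rank-one Jacobian $\mathcal{J}_f=\Gamma_1\,\partial R$, factor $k_2^{\top}\mathcal{J}_f k_1=(\Gamma_1^{\top}k_2)(\partial R\,k_1)$, and case-split on the sign range of $\partial R\,k_1$ according to whether $k_1$ lies in $Q_1^{\pm}$, $Q_2$, or the residual region. The paper simply asserts that $\partial R(x)k$ ``can take on any negative value'' (resp.\ positive, resp.\ any real) in the respective regions, whereas you spell out the realizability via power-law kinetics with a dominating exponent; this is exactly the content the paper leaves implicit, so your extra care there is warranted but not a different argument.
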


\begin{proof}
    Using Theorem \ref{thm:monotone} we have that the system is monotone with respect to $K$ iff $ (\mathcal{J}_f k)^{\top} k' \geq 0$ whenever $ k^{\top} k' = 0$. We have that $f(x) = \Gamma_1 R(x)$ so that
    \[
     (\mathcal{J}_f k)^{\top} k' =   (\Gamma_1  \partial R(x) k)^{\top} k' =  (\Gamma_1^{\top} k') (\partial R(x) k) \geq 0.
    \]
    That the conditions are necessary and sufficient follow immediately from the last expression above. Indeed, when $k \in Q_1^+(\Gamma_1)$ then $\partial R(x) k$ can take on any negative value, and so we must have $\Gamma_1^{\top} k'  \leq 0$. When $k \in Q_1^-(\Gamma_1)$ then $\partial R(x) k \geq 0$ and can take on any positive value, and so $\Gamma_1^{\top} k' \geq 0$. If $k \not\in Q_1^-(\Gamma_1) \cup Q_1^+(\Gamma_1) \cup Q_2(\Gamma_1)$ then $\partial R(x) k$ can take on any real number, and so for the inequality to hold we must have $ \Gamma_1^{\top} k' = 0$. If $k \in Q_2(\Gamma_1)$ then $\partial R(x) k = 0$ and so $(\Gamma_1^{\top} k') (\partial R(x) k) = 0$.
\end{proof}

The following lemma is also present in \cite{doi:10.1080/14689360802243813} as Theorem 6.1. We include a proof for completeness.

\begin{lem}
\label{lem:one_reac_makes_strict_monotone}

    Suppose we are given a reaction network $\mathcal{N}$. We have that the network $\mathcal{N}$ is monotone with respect to a cone $K$ iff each individual reaction is monotone with respect to the cone.
    
\end{lem}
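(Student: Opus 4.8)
The plan is to reduce the monotonicity condition for the whole network, as characterized by Theorem~\ref{thm:monotone}, to a condition that is a conjunction over the individual reactions. Recall that for the network $\mathcal{N}$ with stoichiometric matrix $\Gamma$, the dynamics are $\dot x = \Gamma R(x) = \sum_i \Gamma_i R_i(x)$, so the Jacobian at a point $x$ is $\mathcal{J}_f(x) = \sum_i \Gamma_i\, \partial R_i(x)$. For a boundary vector $k_1 \in \mathrm{bd}(K)$ and $k_2 \in K^*$ with $k_2^{\top}k_1 = 0$, the cross-positivity expression decomposes linearly:
\[
k_2^{\top}\mathcal{J}_f(x) k_1 = \sum_i k_2^{\top}\Gamma_i\, \partial R_i(x)\, k_1 = \sum_i \bigl(\Gamma_i^{\top}k_2\bigr)\bigl(\partial R_i(x)\, k_1\bigr).
\]
So the whole-network condition is that this \emph{sum} of scalar products be nonnegative for every admissible $(x,k_1,k_2)$, while the individual-reaction condition (applying Theorem~\ref{thm:monotone} to $\dot x = \Gamma_i R_i(x)$ with the same $K$) is that \emph{each} term $(\Gamma_i^{\top}k_2)(\partial R_i(x)\, k_1)$ be nonnegative for every admissible triple.

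The $(\Leftarrow)$ direction is immediate: if every reaction is monotone with respect to $K$, then each summand is $\geq 0$ for all admissible $(x,k_1,k_2)$, hence so is the sum, and Theorem~\ref{thm:monotone} gives monotonicity of $\mathcal{N}$. The substantive direction is $(\Rightarrow)$: assuming the sum is always nonnegative, we must show each individual term is. The key observation is that the rate functions $R_i$ are \emph{independent} — the general kinetics assumptions constrain each $\partial R_i(x)$ only in terms of its own reaction vector $\Gamma_i$ and the reactant coordinates of reaction $i$, with no cross-constraints — and, crucially, the sign and magnitude of $\partial R_i(x)\, k_1$ can be controlled essentially independently for each $i$. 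Concretely, following the analysis in Lemma~\ref{lem:signs on dual faces}, for a fixed $k_1$ the quantity $\partial R_i(x)\, k_1 = \sum_j (\partial R_i(x))_j (k_1)_j$ ranges over: all of $(-\infty,0]$ when $k_1 \in Q_1^+(\Gamma_i)$, all of $[0,\infty)$ when $k_1 \in Q_1^-(\Gamma_i)$, all of $\mathbb{R}$ when $k_1 \notin Q_1^+(\Gamma_i)\cup Q_1^-(\Gamma_i)\cup Q_2(\Gamma_i)$, and $\{0\}$ when $k_1 \in Q_2(\Gamma_i)$ — and by choosing $x$ with the appropriate reactant coordinates positive and the partial derivatives scaled appropriately (this scaling freedom is exactly what general kinetics permits, since no normalization is imposed) one realizes these values, for one reaction at a time, while forcing every \emph{other} term to vanish (e.g.\ by taking the corresponding $x$-coordinates to make $\partial R_{i'}(x)\, k_1$ small or zero, or by first fixing the signs the conditions of Lemma~\ref{lem:signs on dual faces} must impose). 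Suppose some individual reaction $i_0$ violated its monotonicity condition: then there is an admissible $(x^0, k_1, k_2)$ with $(\Gamma_{i_0}^{\top}k_2)(\partial R_{i_0}(x^0)\, k_1) < 0$. Using the freedom just described, I would construct a kinetics and a point at which all other summands are $0$ (or arbitrarily small) while the $i_0$ term stays bounded away from $0$ and negative, contradicting the nonnegativity of the sum.

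The main obstacle is making the "control one term while killing the others" step rigorous: one must exhibit, for a given violating triple, an explicit choice of general kinetics $\{R_i\}$ and a point $x^0 \in \mathbb{R}^n_{\geq}$ — simultaneously compatible with all the kinetics assumptions for every reaction — at which $\partial R_{i_0}(x^0)$ produces the bad sign/magnitude on $k_1$ while $\partial R_{i'}(x^0)\, k_1 = 0$ (or is negligible) for $i' \neq i_0$. Power-law kinetics with tunable rate constants $k_i$ and a carefully chosen interior point $x^0$ should suffice: the rate constants rescale $\partial R_{i'}$ without changing which coordinates are zero, so once the signs dictated by Lemma~\ref{lem:signs on dual faces} are arranged, shrinking the constants $k_{i'}$ for $i' \ne i_0$ drives those terms to $0$; if any such term has a \emph{forced} sign opposing $i_0$ that cannot be shrunk away, that itself would already contradict monotonicity of reaction $i_0$ in the needed direction, so the argument closes either way. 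I would present this as: reduce to Lemma~\ref{lem:signs on dual faces}'s three-case dichotomy, note that the whole-network condition restricted to the support of a single reaction's influence is exactly that reaction's condition, and invoke the rate-scaling independence to separate the terms.
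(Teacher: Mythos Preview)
Your proposal is correct and follows essentially the same approach as the paper: decompose $k_2^{\top}\mathcal{J}_f k_1$ as $\sum_i (\Gamma_i^{\top}k_2)(\partial R_i(x)\,k_1)$, observe that sufficiency is immediate, and for necessity exploit the independent scaling freedom in general kinetics to make a single offending term control the sign of the sum. The only cosmetic difference is that the paper scales the bad term up to dominate the others, whereas you scale the other terms down; both arguments rest on the same observation that each $R_i$ can be rescaled independently.
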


\begin{proof}
    In the case that $f(x) = \Gamma R(x) = \sum_i \Gamma_i R_i$ (where $\Gamma_i$ is the $i$'th column of the stoichiometric matrix referring to the $i$'th reaction) we have that $\mathcal{J}_f = \sum_i \Gamma_i \partial R_i$. Then our monotonicity condition $(\mathcal{J}_f k_1)^{\top} k_2 \geq 0$ becomes
    \[
     (\mathcal{J}_f k_1)^{\top} k_2 = \sum_i ( \Gamma_i^{\top} k_2 ) ( \partial R_i(x) k_1) \geq 0.
    \]
    For sufficiency, note that if each summand is nonnegative (i.e., each reaction is monotone with respect to $K$), the sum is nonnegative (i.e., the whole network is monotone). For necessity, If one reaction is not monotone, we can find a summand which is negative for some choice of $x,k_1,k_2$, and due to our working with general kinetics we can choose the summand to be as negative as we want. Thus, in this case there exists a choice of kinetics for which the whole sum is negative. 
\end{proof}

\begin{lem}
\label{lem:reaction vectors extremal vectors}
    If a reaction vector $\Gamma_1$ is monotone with respect to a cone $K$, and $\Gamma_1 \in K$, it must be a boundary vector. If $K$ is polyhedral, it must be an extreme vector.
\end{lem}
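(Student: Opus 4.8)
The plan is to argue both parts by contradiction, feeding Lemma~\ref{lem:signs on dual faces} with a carefully chosen pair $(k_1,k')$. The one computation I will rely on throughout is that a reaction vector never lies in $Q_1(-\Gamma_1)$: for a reversible reaction $(\Gamma_1)_j(-\Gamma_1)_j=-(\Gamma_1)_j^2\le 0$ for every $j$, with strict inequality wherever $(\Gamma_1)_j\neq 0$, and at least one such coordinate exists; for an irreversible reaction the same holds provided it has at least one product. Hence $\Gamma_1\in Q_1^+(\Gamma_1)$. Note that $\Gamma_1$ itself cannot serve as the vector $k_1$ in Lemma~\ref{lem:signs on dual faces}, since that lemma only constrains boundary vectors, so the real work is to manufacture a suitable $k_1\in\mbox{bd}(K)$ together with a supporting functional.

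For the boundary claim I would assume $\Gamma_1\in\mbox{int}(K)$ (taking $K$ full-dimensional; the lower-dimensional case makes $\mbox{bd}(K)=K$ and is vacuous) and first show $\mbox{bd}(K)\not\subseteq Q_1(-\Gamma_1)$. This holds because $Q_1(-\Gamma_1)$ is a closed convex cone while $K$ is the closed conical hull of its extreme rays, each of which lies on $\mbox{bd}(K)$ when $\dim K\ge 2$; if $\mbox{bd}(K)$ were contained in $Q_1(-\Gamma_1)$ we would get $\Gamma_1\in K\subseteq Q_1(-\Gamma_1)$, contradicting the preceding paragraph. So I can pick $k_1\in\mbox{bd}(K)\setminus Q_1(-\Gamma_1)$; since $Q_1(-\Gamma_1)=Q_1^-(\Gamma_1)\cup Q_2(\Gamma_1)$, this $k_1$ lies in $Q_1^+(\Gamma_1)$ or outside $Q_1(\Gamma_1)\cup Q_1(-\Gamma_1)$, so clause $1$ or clause $3$ of Lemma~\ref{lem:signs on dual faces} forces $\Gamma_1^{\top}k'\le 0$ for every $k'\in k_1^*$. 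But a supporting hyperplane of $K$ at the boundary point $k_1$ yields some $k'\in K^*\setminus\{0\}$ with $(k')^{\top}k_1=0$, i.e.\ $k'\in k_1^*$, and $\Gamma_1\in\mbox{int}(K)$ gives $(k')^{\top}\Gamma_1>0$ — a contradiction. Hence $\Gamma_1\in\mbox{bd}(K)$.

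For the polyhedral case I would assume $K$ polyhedral and $\Gamma_1$ not extreme; by the first part $\Gamma_1\in\mbox{bd}(K)$, so let $F$ be the smallest face of $K$ containing $\Gamma_1$, whence $\Gamma_1\in\mbox{relint}(F)$ and $d:=\dim F\ge 2$ (it is $\ge 1$ since $\Gamma_1\neq 0$, and $d=1$ would make $\Gamma_1$ extreme). Write $\Gamma_1=\sum_i\beta_ig_i$ with $\beta_i>0$ over the extreme rays $g_1,\dots,g_m$ of $F$, which are extreme rays of $K$ and number at least $d\ge 2$. Not all $g_i$ can lie in $Q_1(-\Gamma_1)$, else $\Gamma_1$ would too; so some $k_1:=g_i\notin Q_1(-\Gamma_1)$, and as before Lemma~\ref{lem:signs on dual faces} forces $\Gamma_1^{\top}k'\le 0$ for all $k'\in k_1^*=K^*\cap k_1^{\perp}$. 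The punchline is a dimension count for polyhedral cones: the face $K^*\cap k_1^{\perp}$ conjugate to the extreme ray $k_1$ has dimension $n-1$, while $K^*\cap F^{\perp}$ has dimension $n-d\le n-2$; since $k_1\in F$ gives $F^{\perp}\subseteq k_1^{\perp}$, the latter is a proper subset of the former, so I can pick $k'\in(K^*\cap k_1^{\perp})\setminus(K^*\cap F^{\perp})$. Then $k'\in k_1^*$, and because $k'\in K^*$ fails to annihilate $F$ while $\Gamma_1\in\mbox{relint}(F)$ (a functional nonnegative on $F$ and vanishing at a relative-interior point of $F$ vanishes on all of $F$), we get $\Gamma_1^{\top}k'>0$, contradicting $\Gamma_1^{\top}k'\le 0$.

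The hard part will be the polyhedral step: from the sole assumption that $\Gamma_1$ sits in the relative interior of a face of dimension at least two, I must produce a dual vector that is simultaneously admissible in Lemma~\ref{lem:signs on dual faces} (orthogonal to a boundary vector lying outside $Q_1(-\Gamma_1)$) and strictly positive on $\Gamma_1$; making this clean leans on the face correspondence for polyhedral cones and the identity $\dim G+\dim(K^*\cap G^{\perp})=n$. I also expect to need a remark handling the degenerate situations where the statement is read up to the obvious conventions, notably irreversible reactions with no product (where already the basic claim $\Gamma_1\notin Q_1(-\Gamma_1)$ fails) and cones of dimension $\le 1$.
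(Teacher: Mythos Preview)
Your proposal is correct and follows the same core strategy as the paper: both hinge on the observation that $\Gamma_1\notin Q_1(-\Gamma_1)$, together with the fact that monotonicity forces any boundary vector $k$ admitting a supporting functional $k'\in k^*$ with $\Gamma_1^\top k'>0$ to lie in $Q_1(-\Gamma_1)$.

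The only real difference is in how the polyhedral step is organized. The paper argues globally: for \emph{every} extreme ray $k$ of $K$ not proportional to $\Gamma_1$ one can find $k'\in k^*$ with $\Gamma_1^\top k'>0$ (this is the standard fact that the bi-conjugate of an extreme ray is that ray), hence every such $k$ lies in $Q_1(-\Gamma_1)$; if $\Gamma_1$ is not extreme, all extreme rays of $K$ are in $Q_1(-\Gamma_1)$ and therefore $K\subseteq Q_1(-\Gamma_1)$, a contradiction. You instead zoom into the minimal face $F$ containing $\Gamma_1$, pick a single extreme ray of $F$ outside $Q_1(-\Gamma_1)$, and use the conjugate-face dimension formula $\dim G+\dim(K^*\cap G^{\perp})=n$ to manufacture the needed $k'$. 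Your route makes the existence of $k'$ fully explicit, which the paper simply asserts; the paper's route is shorter and avoids the face machinery by running the contrapositive over all extreme rays at once. Both are valid, and your caveats about degenerate cases (irreversible reactions with no product, cones of dimension $\le 1$) are genuine edge cases that the paper glosses over as well.
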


\begin{proof}
    Suppose $\Gamma_1 \in K$, this implies that $ \Gamma_1^{\top} k' \geq 0$ for all $k' \in K^*$ (this is due to the definition of dual cone). Thus, to satisfy $ (\Gamma_1^{\top} k' )  (\partial R(x) k) \geq 0$ we must have that $\partial R(x) k \geq 0$ for all $k$ whenever $\Gamma_1^{\top} k' > 0$. This implies we always have $k \in Q_1(-\Gamma_1)$ whenever $ \Gamma_1^{\top} k' \neq 0$. If $\Gamma_1 \in \mbox{Int}(K)$ then we always have $\Gamma_1^{\top} k' \neq 0$ and so $K \subseteq Q_1(-\Gamma_1)$ contradicting that $\Gamma_1 \in K$. 

    If $K$ is polyhedral, for an extreme vector $k \in K$ such that $k \neq \Gamma_1$ we can find $k' \in k^*$ such that $k'^{\top} \Gamma_1 \neq 0$ (and since $\Gamma_1 \in K$ we must have in this case $k'^{\top} \Gamma_1 > 0$). This implies that $k \in Q_1(-\Gamma_1)$. If $\Gamma_1$ is not any of the extreme vectors, then again we have $K \subseteq Q_1(-\Gamma_1)$, a contradiction. Thus, $\Gamma_1$ is one of the extreme vectors.
\end{proof}

\begin{lem}
\label{lem:subtract_off}
    Suppose we are given a cone $K$ and a reaction network consisting of a single reaction vector $\Gamma_1 \not\in K$. Our network is monotone with respect to $K$ iff the following properties hold: 
    \begin{enumerate}
        
        \item For all boundary vectors $k \not\in  Q_1(\Gamma_1) \cup Q_1(-\Gamma_1)$ we can find two other vectors $k'\in Q_1(\Gamma_1)$ and $k'' \in Q_1(-\Gamma_1)$ and two positive reals $\alpha_1,\alpha_2$ such that $\alpha_1\Gamma_1 = k - k'$ and $\alpha_2\Gamma_1 = k'' - k$ 
        \item For all boundary vectors $k \in Q_1^+(\Gamma_1)$ we can find a vector $k' \in Q_1(-\Gamma_1) \cap K$ and positive real $\alpha$ such that $\alpha\Gamma_1 = k - k'$.
        \item For all boundary vectors $k \in Q_1^-(\Gamma_1)$ we can find a vector $k' \in Q_1(\Gamma_1) \cap K$ and positive real $\alpha$ such that $\alpha\Gamma_1 = k' - k$.
    \end{enumerate}

\end{lem}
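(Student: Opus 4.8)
The plan is to derive this from Lemma~\ref{lem:signs on dual faces}, which already characterizes monotonicity of the single reaction $\Gamma_1$ with respect to $K$ via the sign of the linear functional $w \mapsto \Gamma_1^{\top} w$ on each complementary face $k^* = K^* \cap k^{\perp}$ of $K^*$, ranging over $k \in \mbox{bd}(K)$; the task is to recast those sign statements as the decomposition statements in items~1--3. I would treat the case $k \in Q_1^+(\Gamma_1)$ (item~2) in detail, observe that item~3 is its mirror image under $\Gamma_1 \mapsto -\Gamma_1$, and handle item~1 with the same tools together with the observation at the end.

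For the direction ``decomposition $\Rightarrow$ sign condition'', suppose $\alpha \Gamma_1 = k - k'$ with $\alpha > 0$ and $k' \in K$. For every $w \in k^*$ we have $w \in K^*$ and $w^{\top} k = 0$, so $\alpha\, w^{\top}\Gamma_1 = w^{\top} k - w^{\top} k' = -w^{\top} k' \le 0$ because $w^{\top} k' \ge 0$ by definition of $K^*$. Hence $\Gamma_1^{\top} w \le 0$ for all $w \in k^*$, which is exactly condition~1 of Lemma~\ref{lem:signs on dual faces}; the analogous computation gives the corresponding conditions for items~3 and~1. (Note this direction never uses membership of $k'$ in $Q_1(\pm\Gamma_1)$.)

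For the converse, assume $\Gamma_1^{\top} w \le 0$ for all $w \in k^*$, i.e.\ $-\Gamma_1 \in (k^*)^*$. I would invoke the standard conic identity $(K^* \cap k^{\perp})^* = \overline{K + \mathbb{R}k}$ to get $-\Gamma_1 \in \overline{K + \mathbb{R}k}$, hence (for $K$ polyhedral, or via a separate closedness argument for $K + \mathbb{R}k$ at the boundary point $k$) $-\Gamma_1 = c + \lambda k$ with $c \in K$ and $\lambda \in \mathbb{R}$. If $\lambda \le 0$ then $k - \alpha\Gamma_1 \in K$ for \emph{all} $\alpha > 0$, while if $\lambda > 0$ then $k - \tfrac{1}{\lambda}\Gamma_1 = \tfrac{1}{\lambda} c \in K$; in either case $\{\alpha \ge 0 : k - \alpha\Gamma_1 \in K\}$ is an interval $[0,\beta]$ (possibly $[0,\infty)$) with $\beta > 0$. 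It then remains to choose $\alpha$ in this interval for which $k' := k - \alpha\Gamma_1$ also lies in $Q_1(-\Gamma_1)$: reading off the defining inequalities, since $k \in Q_1(\Gamma_1)$ each ratio $(k)_i/(\Gamma_1)_i$ over $i$ with $(\Gamma_1)_i \ne 0$ is $\ge 0$, and $k - \alpha\Gamma_1 \in Q_1(-\Gamma_1)$ exactly once $\alpha \ge \alpha^* := \max_{i:\,(\Gamma_1)_i \ne 0}(k)_i/(\Gamma_1)_i$, a finite positive number (positive because $k \notin Q_2(\Gamma_1)$). So the argument closes provided $\beta \ge \alpha^*$.

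The main obstacle is precisely this inequality $\beta \ge \alpha^*$ --- that the conic decomposition can be taken with the multiplier large enough to reach the correct sign region. I would argue by contradiction: if $\beta < \alpha^*$ then $\hat k := k - \beta\Gamma_1 \in \mbox{bd}(K)$ lies in $Q_1^+(\Gamma_1)$ or in the mixed region, and a separating-hyperplane argument at $\hat k$ should produce $w \in K^* \cap \hat k^{\perp}$ with $w^{\top}\Gamma_1 > 0$, contradicting Lemma~\ref{lem:signs on dual faces} applied at $\hat k$. Arranging that the separating functional has $w^{\top}\Gamma_1$ \emph{strictly} positive rather than just $\ge 0$, and dealing with non-closedness of $K + \mathbb{R}k$ for non-polyhedral $K$, are the delicate points. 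For item~1 I would additionally note that its requested witnesses can exist only when $\mbox{bd}(K)$ avoids the mixed region altogether, so that item~1 amounts to the assertion that monotonicity together with $\Gamma_1 \notin K$ forces $\mbox{bd}(K) \subseteq Q_1(\Gamma_1) \cup Q_1(-\Gamma_1)$.
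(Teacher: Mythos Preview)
Your sufficiency direction coincides with the paper's: both write $\Gamma_1 = (k - k')/\alpha$ and pair with $w \in k^*$ to extract the sign of $\Gamma_1^{\top} w$ from $w^{\top} k' \ge 0$.

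For necessity the paper takes a different and more direct route. Rather than invoking the bipolar identity $(K^* \cap k^{\perp})^* = \overline{K + \mathbb{R}k}$ and then struggling to push the multiplier all the way to $\alpha^*$, the paper argues dynamically: since monotonicity must hold for \emph{every} choice of general kinetics, one picks interior points $x,y$ with $y - x = k$ and kinetics for which $\Gamma_1 R(y) - \Gamma_1 R(x)$ is a negative multiple of $\Gamma_1$; order preservation along the flow then forces $k - \alpha\Gamma_1 \in K$ for small $\alpha > 0$, and one iterates along the segment as long as the current point remains outside $Q_1(-\Gamma_1)$. In effect the paper shows directly that $\{\alpha \ge 0 : k - \alpha\Gamma_1 \in K\}$ is both closed (since $K$ is) and relatively open in $[0,\alpha^*)$, hence contains $[0,\alpha^*]$. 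This sidesteps precisely the two issues you flag as delicate --- strict positivity of the separating functional and closedness of $K + \mathbb{R}k$ --- and works for arbitrary closed pointed convex $K$ without any polyhedrality hypothesis. Your duality approach is tidy in the polyhedral case and makes the link to Lemma~\ref{lem:signs on dual faces} very explicit, but it is the paper's kinetic argument that closes the gap in general.

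One correction on item~1: it is \emph{not} intended to force $\mbox{bd}(K)$ out of the mixed region. Boundary vectors $k$ in the mixed region are allowed; item~1 asserts that each such $k$ lies in the interior of a segment in $K$ in the $\Gamma_1$ direction whose endpoints land in $Q_1(\pm\Gamma_1)$ (the labels on $k',k''$ in the statement appear transposed relative to the direction of travel, but the proof makes the intended reading clear). Your observation that $k - \alpha\Gamma_1$ with $\alpha > 0$ cannot reach $Q_1(\Gamma_1)$ from the mixed region is correct, but the conclusion to draw is that the roles of $Q_1(\Gamma_1)$ and $Q_1(-\Gamma_1)$ are swapped in the statement, not that the condition is vacuous. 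The same dynamical argument, now pushing in both directions $\pm\Gamma_1$, produces the two witnesses.
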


\begin{proof}
For the necessity, suppose we have a vector $k \in K \cap Q_1^+(\Gamma_1)$. Consider two points $x,y$ in the relative interior of our stoichiometric class (so that reaction rates are nonzero). Suppose $y-x = k$. Note that under our assumption of general kinetics, the difference between the vector field at these two points $\Gamma R(y) - \Gamma R(x)$ can be chosen to be equal to the vector $-\Gamma_1$ (use power-law kinetics, for example). Thus, for some small set time, $\phi_t(y) - \phi_t(x)$ can be arbitrarily close to $- \alpha \Gamma_1$ for some $\alpha > 0$. Since our cone is closed, it must in fact then contain $k - \alpha \Gamma_1$. We can continue this way to see that we must have $k - \alpha \Gamma_1 \in K$ for all $\alpha$ such that $k - \alpha \Gamma_1 \not\in Q_1(-\Gamma_1)$. The same argument applies for $k \in Q_1^-(\Gamma_1)$ as well as $k \not\in  Q_1(\Gamma_1) \cup Q_1(-\Gamma_1)$.

For the sufficiency assume all three conditions hold. Then for $k \in Q_1^+(\Gamma_1)$ we have that for $k^*$ such that $k^{\top} k^* = 0$ that
\[
\Gamma_1^{\top} k^*  = \left(\frac{k-k'}{\alpha}\right)^{\top} k^*  =  \left(\frac{-k'}{\alpha}\right)^{\top} k^*  \leq 0.
\]
The last inequality is since in general for $k \in K$ and $k^* \in K^*$ that $ k^{\top} k^*  \geq 0$. For $k \in Q_1^-(\Gamma_1)$ the reasoning is the same.

For $k \not\in  Q_1(\Gamma_1) \cup Q_1(-\Gamma_1)$ we can in the same way from $\alpha_1\Gamma_1 = k - k'$ and $\alpha_2\Gamma_1 = k'' - k$ conclude that $\Gamma_1^{\top} k^*  \leq 0 $ and $ \Gamma_1^{\top} k^* \geq 0$, respectively. Thus, $ \Gamma_1^{\top} k^* = 0$.
\end{proof}

In the proof of necessity for Lemma \ref{lem:subtract_off} we essentially argue for some vectors in our cone, we can choose kinetics which forces additional vectors to be in our cone. We will need a lemma for polyhedral cones. This will make sure we only need to perform finitely many computations in constructing our cones.

\begin{lem}
\label{lem:only check extreme vectors}
    If our cone $K$ is polyhedral, we only need to check the conditions in Lemma \ref{lem:subtract_off} for the extreme vectors of the polyhedral cone to verify monotonicity.
\end{lem}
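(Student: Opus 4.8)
The plan is to reduce each of the three conditions of Lemma \ref{lem:subtract_off}, stated for all boundary vectors $k \in \mbox{bd}(K)$, to the corresponding condition for the finitely many extreme vectors $k_1,\dots,k_m$ that generate the polyhedral cone $K$. The key observation is that every boundary vector $k$ of a polyhedral cone lies in a proper face $F$ of $K$, and that face is itself a polyhedral cone generated by the subset of the $k_i$ it contains; hence $k = \sum_{i \in I} \beta_i k_i$ with $\beta_i > 0$ for the extreme vectors indexed by $I$, and every such extreme vector $k_i$ ($i \in I$) is again a boundary vector of $K$. So first I would fix such a $k$ and its representation as a nonnegative combination of extreme vectors lying on the same face.

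Next I would check that the regions $Q_1(\Gamma_1)$, $Q_1(-\Gamma_1)$, $Q_1^+(\Gamma_1)$, $Q_1^-(\Gamma_1)$ interact well with convex combinations. Each of these is defined by a conjunction of sign conditions of the form $(x)_i (\Gamma_1)_i \geq 0$ (or $\le 0$), i.e. each is an intersection of half-spaces through the origin, so each $Q_1(\pm\Gamma_1)$ is a convex cone. Consequently, if all the $k_i$ with $i \in I$ lie in $Q_1(\Gamma_1)$, so does $k$; the same for $Q_1(-\Gamma_1)$. The slightly more delicate point is the "strictly outside" regions: I would argue that if $k \notin Q_1(\Gamma_1) \cup Q_1(-\Gamma_1)$, then—because membership in $Q_1(\pm\Gamma_1)$ is preserved under nonnegative combinations—at least one extreme vector $k_i$ ($i\in I$) must fail to lie in $Q_1(\Gamma_1)$ and at least one (possibly different) must fail to lie in $Q_1(-\Gamma_1)$; more carefully, I want to show the witnessing certificates at the extreme vectors can be assembled into a certificate at $k$.

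The heart of the argument is assembling the witnesses. Suppose condition (2) holds at each extreme vector: for $k_i \in Q_1^+(\Gamma_1)$ there is $k_i' \in Q_1(-\Gamma_1)\cap K$ and $\alpha_i > 0$ with $\alpha_i \Gamma_1 = k_i - k_i'$. For a boundary vector $k = \sum_{i\in I}\beta_i k_i \in Q_1^+(\Gamma_1)$, I would set $k' = \sum_{i \in I}\beta_i k_i'$ and $\alpha = \sum_{i\in I}\beta_i \alpha_i$; then $\alpha \Gamma_1 = \sum_i \beta_i(k_i - k_i') = k - k'$, and $k' \in K$ since $K$ is a convex cone and each $k_i' \in K$, while $k' \in Q_1(-\Gamma_1)$ since $Q_1(-\Gamma_1)$ is a convex cone containing each $k_i'$. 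One subtlety: some $k_i$ in the combination for $k$ might not themselves lie in $Q_1^+(\Gamma_1)$—they could lie in $Q_1^-$, $Q_2$, or outside—so I would need to handle those summands using the appropriate condition (1) or (3), or the trivial membership in $Q_1(\pm\Gamma_1)$, and note the sum of the resulting decompositions still telescopes correctly because all decompositions are of the form $\alpha_i\Gamma_1 = (\text{something}) - (\text{something})$ with the pieces in the right $Q_1$ regions and in $K$. Conditions (1) and (3) are handled by the same bookkeeping, producing both an upper and a lower decomposition. I expect this case analysis—verifying that a convex combination of boundary vectors with mixed $Q$-membership still admits the required $\Gamma_1$-decomposition on both sides—to be the main obstacle; everything else is the routine fact that the $Q_1(\pm\Gamma_1)$ and $K$ are convex cones, closed under the nonnegative combinations in play, together with the face structure of polyhedral cones. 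Finally I would note the converse is immediate since extreme vectors are in particular boundary vectors, so the conditions of Lemma \ref{lem:subtract_off} for all boundary vectors trivially imply them for the extreme ones.
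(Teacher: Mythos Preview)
Your approach is essentially the paper's: write an arbitrary boundary vector as a nonnegative combination of extreme vectors, then add up the $\Gamma_1$-witnesses from the extreme vectors and use that $Q_1(\pm\Gamma_1)$ and $K$ are convex cones. The paper does exactly this, setting $\alpha'_i = \alpha_i$ for summands $k_i \in Q_1^+(\Gamma_1)$ and $\alpha'_i = 0$ otherwise, so that $k - (\sum_i \lambda_i \alpha'_i)\Gamma_1 = \sum_i \lambda_i(k_i - \alpha'_i\Gamma_1)$ lands in $Q_1(-\Gamma_1)\cap K$.

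The one thing you miss, which removes precisely the ``main obstacle'' you flag, is that \emph{no extreme vector can lie outside $Q_1(\Gamma_1)\cup Q_1(-\Gamma_1)$}. If an extreme $k_i$ satisfied condition~1 of Lemma~\ref{lem:subtract_off}, the two vectors $k_i'$ and $k_i''$ produced there would exhibit $k_i$ as a strict convex combination of two other vectors in $K$, contradicting extremality. With this observation the mixed-membership bookkeeping collapses: every extreme summand is already in $Q_1(\Gamma_1)$ or $Q_1(-\Gamma_1)$, so for each one either the witness from condition~2/3 applies or the trivial witness $\alpha'_i = 0$ does. The face-structure detour is also unnecessary; any nonnegative representation $k = \sum_i \lambda_i k_i$ over all extreme vectors suffices.
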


\begin{proof}
    Suppose the conditions of Lemma \ref{lem:subtract_off} holds for the extreme vectors of a polyhedral cone $K$, which we will label as the vectors $k_1,k_2,...,k_N$. For each $k_i$ there exists a real number $\alpha_i$ such that $k_i + \alpha_i \Gamma_1$ is the minimal quantity creating a vector in $Q_1(\Gamma_1)$, as in Lemma \ref{lem:subtract_off} (note that we cannot have any extreme vectors outside of $ Q_1(\Gamma_1) \cup Q_1(-\Gamma_1)$ by condition 3 of \ref{lem:subtract_off}, since any such vector is a strict convex combination of the two vectors produced in Lemma \ref{lem:subtract_off}).

    Take an arbitrary vector $k \in K$ or $Q_1(-\Gamma_1)$. We can find $\lambda_i$ such that $\lambda_i \geq 0$ and 
    \[
    k = \sum_{i=1}^N \lambda_i k_i.
    \]
    If $k \in Q_1^+(\Gamma_1)$, form the sum $ (\sum_{l=1}^N \lambda_{i} \alpha'_{i}) \Gamma_1$ where $\alpha'_{i} = \alpha_{i}$ if $k_i  \in Q_1^+(\Gamma_1)$, otherwise $ \alpha_{i} = 0$.

    We have that
    \[
    k - (\sum_{l=1}^N \lambda_{i} \alpha'_{i}) \Gamma_1 = \sum_{i=1}^N \lambda_i (k_i - \alpha'_{i} \Gamma_1).
    \]

    Note that $k_i - \alpha'_{i} \Gamma_1 \in Q_1(-\Gamma) \cap K$ for all $i$, and thus $k - (\sum_{l=1}^N \lambda_{i} \alpha'_{i}) \Gamma_1 \in Q_1(-\Gamma) \cap K$. The same argument can be applied for $k \in Q_1^-(\Gamma_1)$ or $k \not\in Q_1(\Gamma_i) \cup Q_1(-\Gamma_i)$. Thus, the conditions of Lemma \ref{lem:subtract_off} are satisfied.
\end{proof}

We will need one more lemma about the dimension of $K$. 

\begin{lem}
\label{lem: cone contains set with full dimension}
    Suppose we are given a reaction network $\Gamma$ which has a strongly connected R-graph and is monotone with respect to a cone $K$. Then there exists a set $C \subset K$ such that $\mbox{Im}(\Gamma) \subseteq \mbox{Aff}(C-C)$.
\end{lem}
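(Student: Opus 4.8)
The plan is to prove the equivalent, cleaner statement $\mbox{Im}(\Gamma)\subseteq\mbox{span}(K)$ and then simply take $C=K$. This is equivalent because every cone here is closed, so $0\in K$ and hence $\mbox{Aff}(K-K)=\mbox{span}(K)$; and for any $C\subseteq K$ we have $\mbox{Aff}(C-C)\subseteq\mbox{Aff}(K-K)$, so $\mbox{Aff}(K-K)$ is the largest affine set of the required form. Write $S:=\mbox{span}(K)$. For a reaction $i$ let $\rho_i$ be the set of indices $j$ such that $A_j$ influences the rate $R_i$ (so $\rho_i$ is the support of $\Gamma_i$ for a reversible reaction, and the reactant set for an irreversible one), and let $H_i:=\{x\in\mathbb{R}^n:(x)_j=0\ \forall j\in\rho_i\}$, a linear subspace.

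First I would establish a single-reaction constraint: \emph{if $S\not\subseteq H_i$ then $\Gamma_i\in S$.} Fix a generic $k$ in the relative interior of $K$ (so $(k)_j\neq 0$ for some $j\in\rho_i$ whenever $S\not\subseteq H_i$, since finitely many proper subspaces of $S$ cannot cover the relative interior) and pick $x,y$ in the interior of $\mathbb{R}^n_{\geq}$ with $y-x=k$. For \emph{any} general kinetics, monotonicity with respect to $K$ gives $z(t):=\phi_t(y)-\phi_t(x)\in K\subseteq S$ for all $t\geq 0$, while $z(t)-k=\int_0^t\Gamma\bigl(R(\phi_s y)-R(\phi_s x)\bigr)\,ds\in\mbox{Im}(\Gamma)$; letting $t\downarrow 0$ and using that $S\cap\mbox{Im}(\Gamma)$ is a closed subspace, we get $z'(0)=\Gamma\bigl(R(y)-R(x)\bigr)\in S$. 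Now I exploit the kinetic freedom: with power-law kinetics having rate constant $n$ on reaction $i$ and $1/n$ on the others, and exponents making $R_i(y)\neq R_i(x)$, the vectors $\frac1n\,\Gamma\bigl(R(y)-R(x)\bigr)$ lie in $S$ and converge to $d\,\Gamma_i$ for some $d\neq0$, hence $\Gamma_i\in S$. (Alternatively this can be extracted from Lemmas~\ref{lem:reaction vectors extremal vectors} and \ref{lem:subtract_off}: if $\Gamma_i\notin S$ then $\Gamma_i\notin K$ and no boundary vector of $K$ lies outside $Q_2(\Gamma_i)$, so $\mbox{bd}(K)\subseteq Q_2(\Gamma_i)$ and therefore $S\subseteq H_i$.)

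Next I would propagate along the R-graph. Set $D:=\{i:\Gamma_i\notin S\}$; by the previous step $i\in D$ implies $S\subseteq H_i$. If there is an arrow $R_l\to R_i$, then some index $p$ with $(\Gamma_l)_p\neq 0$ lies in $\rho_i$, so $S\subseteq H_i\subseteq\{x:(x)_p=0\}$ forces $\Gamma_l\notin S$, i.e.\ $l\in D$; thus $D$ is closed under taking in‑neighbours. If $D\neq\emptyset$, fix $i_0\in D$; by strong connectivity every reaction reaches $i_0$ along a directed path, and walking that path backwards puts every reaction into $D$. Then $S\subseteq\bigcap_i H_i=\{x:(x)_j=0\ \forall j\in\bigcup_i\rho_i\}=\{0\}$, since every species influences the rate of some reaction, contradicting $K\neq\{0\}$. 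Hence $D=\emptyset$, so $\mbox{Im}(\Gamma)\subseteq S=\mbox{Aff}(K-K)$ and $C=K$ works.

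The hard part will be the kinetic-freedom argument: one must verify that, within the general-kinetics axioms, $R(y)-R(x)$ can genuinely be steered so a single reaction dominates — this needs care when the coordinates of $k$ on $\rho_i$ have mixed signs, and it is where admissibility of power-law kinetics is used. A second point to flag is that the propagation step really does use the convention that every species influences the rate of at least one reaction (automatic for reversible networks once every species appears somewhere, but a genuine hypothesis for irreversible networks with pure-product species; otherwise $A_1\to A_2$ is monotone with respect to the ray $\mathbb{R}_{\geq0}e_2$, whose span omits $\mbox{Im}(\Gamma)$). Finally, carrying out the whole argument inside $S=\mbox{span}(K)$ rather than $\mathbb{R}^n$ is what keeps it valid for cones $K$ that are not full-dimensional.
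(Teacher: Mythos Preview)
Your proof is correct and takes a genuinely different route from the paper's. The paper argues constructively: it picks a vector $k\in K$ having a nonzero coordinate that affects some rate $R_i$, invokes Lemma~\ref{lem:subtract_off} to place $k+\alpha_i\Gamma_i$ inside $K$, observes that this new vector is nonzero in every species of reaction $i$, and then iterates along the R-graph; strong connectivity ensures every reaction direction eventually appears, and $C$ is the finite set of vectors produced. Your argument is instead contrapositive and linear-algebraic: you show that $\Gamma_i\notin S=\mbox{span}(K)$ forces $S\subseteq H_i$, note this propagates backward along R-graph arrows, and conclude from strong connectivity and $K\neq\{0\}$ that no such $i$ exists. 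The paper's version has the advantage of being operational---it directly anticipates the iterative cone-building of Lemma~\ref{lem:produce_new_points} and the algorithms that follow---whereas yours is tidier as a pure existence statement and exposes the role of each hypothesis more clearly. Your alternative derivation of the single-reaction constraint via Lemma~\ref{lem:subtract_off} is in fact closer to the paper's mechanism, and arguably safer than the kinetic-scaling limit, since it avoids the mixed-sign subtlety you flag.

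Your observation about pure-product species is a genuine point: the paper's proof has the very same gap, since if $K\subseteq\bigcap_i H_i$ one cannot even begin (no $k\in K$ has a nonzero coordinate impacting any rate), and your example $A_1\Rightarrow A_2$ with $K=\mathbb{R}_{\geq 0}e_2$ shows the lemma as literally stated can fail for irreversible networks in which some species never appears as a reactant.
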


\begin{proof}
    Take a vector $k \in K$ such that it has some nonzero coordinates that can impact the kinetics of the reaction $\Gamma_i$. Then by Lemma \ref{lem:subtract_off} the vector $k + \alpha_i \Gamma_i$ will also be in our cone for suitable choice of real number $\alpha$. Thus, for every species of $\Gamma_i$ we will have a vector with non-zero coordinate corresponding to that species. We can continue this process with every other reaction whose kinetics is impacted by a species of $R_i$. Since our R-graph is assumed to be strongly connected, eventually we will carry out going from $k'$ to $k' + \alpha_j \Gamma_j$ for all reaction vectors $j$. Taking the starting and ending vector for all these operations, at least one for each reaction, and forming the convex hull gives us our desired set $C$.
\end{proof}

This lemma implies in particular, that we can find $x$ such that the convex set $(x + \mbox{Im}(\Gamma)) \cap K$ has the same dimension as $\mbox{Im}(\Gamma)$.

\subsection{Lifted networks}

\begin{definition}
    Given a reaction network $\Gamma$, a lift of $\Gamma$ is any network $\hat{\Gamma}$ created by appending some finite number of all zero rows to the bottom of $\Gamma$.
\end{definition}

For example, a lift of $\Gamma = \begin{bmatrix} 1 & 2 \\ -1 & -1 \end{bmatrix}$ could be $\hat{\Gamma} = \begin{bmatrix} 1 & 2 \\ -1 & -1 \\ 0 & 0 \\ 0 & 0 \end{bmatrix}$. When investigating if the reaction network $\Gamma$ is monotone with respect to any cone $K$, we might ask if a lift of $\Gamma$ is monotone with respect to any cone instead. Essentially, we allow ourselves to add dummy species in the hope that this will produce some monotone systems. Below, we describe why a lifted system being monotone is, in a certain sense, equivalent to the system being non-expansive.

\begin{lem}
\label{lem:transversal cones bounded section}
    Let $V$ be a linear subspace and suppose $K \cap V = \emptyset$. Then if $x \in \mathbb{R}^n$ is such that $K \cap (x + V) \neq \emptyset$ then $K \cap (x +V)$ must be a bounded set.
\end{lem}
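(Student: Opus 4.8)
The plan is to argue by contradiction using a normalization (recession-cone) argument: the same idea that shows a linear subspace meeting the recession directions of a set only trivially forces bounded sections. First I would suppose the section $S := K \cap (x+V)$ is unbounded, and choose a sequence $y_m \in S$ with $\|y_m\| \to \infty$, writing $y_m = x + w_m$ with $w_m \in V$.

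Next I would normalize: set $u_m := y_m/\|y_m\|$, which lies on the (compact) unit sphere, and pass to a subsequence with $u_m \to u$ where $\|u\| = 1$. The key step is to check that $u \in K \cap V$. For membership in $K$: since $\|y_m\| > 0$ and $K$ is a cone, $u_m = (1/\|y_m\|)\,y_m \in K$, and $K$ is closed, so $u \in K$. For membership in $V$: we have $u_m - x/\|y_m\| = w_m/\|y_m\| \in V$, and $x/\|y_m\| \to 0$ because $x$ is fixed while $\|y_m\| \to \infty$; hence $w_m/\|y_m\| \to u$, and since $V$ is a closed subspace, $u \in V$. Thus $u$ is a unit (in particular nonzero) vector in $K \cap V$, contradicting transversality. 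Therefore $S$ is bounded.

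I do not expect a genuine obstacle: the argument uses only that $K$ is a closed cone (pointedness and convexity are not needed) and that $V$ is closed. The two points requiring a little care are (i) normalizing before taking the limit, so that the limiting direction is guaranteed to be nonzero, and (ii) observing that the stated hypothesis $K \cap V = \emptyset$ should be read as $K \cap V = \{0\}$, since every closed cone contains the origin; the contradiction above produces a \emph{nonzero} common vector, so it refutes this reading as well.
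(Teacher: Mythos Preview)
Your proof is correct and uses the same recession-direction idea as the paper: from an unbounded section, extract a normalized limit direction and show it lies in both $K$ and $V$. Your execution is in fact a bit cleaner—you normalize by $\|y_m\|$ directly and need only that $K$ is a closed cone, whereas the paper anchors at a relative-interior point $k$ and uses convex combinations $\lambda_i v_i + (1-\lambda_i)k$ (hence convexity of $K$) to land on a fixed sphere before passing to a limit; your reading of the hypothesis $K \cap V = \emptyset$ as $K \cap V = \{0\}$ is also correct, since under the paper's standing assumption that cones are closed one always has $0 \in K$.
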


\begin{proof}
     Let $S = K \cap (x + V)$. Find a vector $k$ in the relative interior of $S$. Note we can write $S$ in the form $S = K \cap (k + V)$, since $(k + V) \cap (x + V) \neq \emptyset$ so they are identical. Note $S$ must be a bounded set. Suppose it was not bounded. Let $B_r$ be the ball of radius $r$ (under the Euclidean distance). Since $S$ is unbounded we can find a sequence of vector $\{v_i\}$ in $S$ such that $v_i \not\in k + B_i$. Since our set is convex we can find $0 < \lambda_i < 1$ and integer $I > 1$ such that $\lambda_i v_i + (1-\lambda_i) k \in \partial B_r $ for $i > I$. Repeating this procedure for $v_i$ with $i > I$, we arrive at an infinite sequence of points with a limit point $\bar{v} - v \in \partial(B_r)$. Note that since the sequence is unbounded we in fact must have that vectors of the form $\alpha_r(\bar{v} - v) + v $ are limit points of $B_r$ for all $r > 0$, and thus $\bar{v}-v \in K$ since $K$ is a closed cone. Since $\bar{v},v \in x + V$ we also have that $\bar{v}-v \in V$. This contradicts $K \cap V = \emptyset$.
\end{proof}

\begin{lem}
\label{lem:lift a cone}
    Suppose we have a reaction network $\Gamma$ (which is possibly lifted) which is monotone with respect to a cone $K$ and has a strongly connected R-graph. If $K \cap \mbox{Im}(\Gamma) = \emptyset$, then there exists a norm for which our system is non-expansive.
\end{lem}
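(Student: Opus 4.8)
\emph{Proof strategy.} The plan is to pass to the linearization of the flow along pairs of trajectories, observe that this time-varying linear flow preserves $K$ by cross-positivity, and then build a norm out of $K$ whose unit ball this flow cannot inflate. Concretely, I would fix $x_0,y_0$ in the interior of $\mathbb{R}^n_{\ge}$, set $x(t)=\phi_t(x_0)$, $y(t)=\phi_t(y_0)$, $z(t)=y(t)-x(t)$, and write
\[
R(y(t))-R(x(t)) \;=\; M(t)\,z(t), \qquad M(t)=\int_0^1 \partial R\big(x(t)+s\,z(t)\big)\,ds,
\]
so that $z$ obeys the linear equation $\dot z = \Gamma M(t)\,z$ (the segment between $x(t)$ and $y(t)$ stays in the interior of $\mathbb{R}^n_{\ge}$, where $R$ is $C^1$). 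Since the network is monotone with respect to $K$, Theorem~\ref{thm:monotone} says $\Gamma\,\partial R(x)$ is cross-positive for $K$ at every admissible point and kinetics; as cross-positivity is preserved under averaging, $\Gamma M(t)$ is cross-positive for $K$ for each $t$. By the subtangentiality/Nagumo characterization of flow-invariance (equivalently, by approximating the transition matrix of $\dot z=\Gamma M(t)z$ by products of exponentials $e^{\delta\,\Gamma M(t_i)}$, each of which maps $K$ into $K$), the transition matrix $\Phi(t)$ satisfies $\Phi(t)K\subseteq K$. Moreover $\dot z\in\mathrm{Im}(\Gamma)$ identically, so $\Phi(t)v-v\in\mathrm{Im}(\Gamma)$ for every $v$.

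To construct the norm I would exploit transversality. Since $K$ is pointed there is $\xi_0\in\mathrm{int}(K^*)$, and the base $B_0=\{k\in K:\xi_0^{\top}k=1\}$ is bounded (an instance of Lemma~\ref{lem:transversal cones bounded section}, applied with the hyperplane $\{\xi_0\}^{\perp}$), hence compact, and disjoint from $\mathrm{Im}(\Gamma)$ because $K\cap\mathrm{Im}(\Gamma)=\{0\}$. Separating the compact convex set $B_0$ from the subspace $\mathrm{Im}(\Gamma)$ produces a linear functional $\eta$ that is orthogonal to $\mathrm{Im}(\Gamma)$ and strictly positive on $K\setminus\{0\}$; in particular $\eta^{\top}\Gamma=0$, so $\eta^{\top}(\cdot)$ is a conserved quantity. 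I then define
\[
\|v\| \;=\; \inf\{\,\eta^{\top}a+\eta^{\top}b \;:\; v=a-b,\ a,b\in K\,\}.
\]
Strict positivity of $\eta$ on $K$ makes this a genuine norm (finite, symmetric, subadditive, and positive-definite because $\{k\in K:\eta^{\top}k\le 1\}$ is compact) on $K-K=\mathrm{span}(K)$, and by Lemma~\ref{lem: cone contains set with full dimension} the strongly connected R-graph forces $\mathrm{Im}(\Gamma)\subseteq\mathrm{span}(K)$, so $\|\cdot\|$ is defined on all differences $\phi_t(y_0)-\phi_t(x_0)$ arising within a stoichiometric class.

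Non-expansivity is then immediate from the two structural facts about $\Phi(t)$. Given any splitting $z(0)=a-b$ with $a,b\in K$, we have $\Phi(t)z(0)=\Phi(t)a-\Phi(t)b$ with $\Phi(t)a,\Phi(t)b\in K$, and $\eta^{\top}(\Phi(t)a-a)=0$ since $\Phi(t)a-a\in\mathrm{Im}(\Gamma)$, so $\eta^{\top}\Phi(t)a=\eta^{\top}a$ and likewise for $b$. Hence $\|\Phi(t)z(0)\|\le\eta^{\top}\Phi(t)a+\eta^{\top}\Phi(t)b=\eta^{\top}a+\eta^{\top}b$, and taking the infimum over splittings of $z(0)$ gives $\|\phi_t(y_0)-\phi_t(x_0)\|\le\|y_0-x_0\|$. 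This is non-expansivity for interior data, and it propagates to all of $\mathbb{R}^n_{\ge}$ by continuity of $\phi_t$ in the initial condition.

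The places where I expect the real work to be are: (i) the linearization/averaging step when a trajectory runs along the boundary of $\mathbb{R}^n_{\ge}$, where $R$ may fail to be $C^1$ — this should dissolve by running the argument in the (finite-time forward-invariant) interior and taking limits; (ii) the dimension of $K$: if $K$ is not full-dimensional the gauge above is only a norm on $\mathrm{span}(K)$, and one has to argue this is exactly the space in which trajectory differences evolve — here the strongly connected hypothesis via Lemma~\ref{lem: cone contains set with full dimension} is essential, possibly combined with peeling off genuinely decoupled (zero-row) coordinates, and one must be careful whether "non-expansive" is required across all pairs in $\mathbb{R}^n_{\ge}$ or only within stoichiometric classes; and (iii) upgrading $e^{tA}K\subseteq K$ for a single cross-positive $A$ to $\Phi(t)K\subseteq K$ for the time-varying flow. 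I regard (ii) — pinning down the correct ambient space for the norm — as the main obstacle.
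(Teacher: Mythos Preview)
Your argument is correct and takes a genuinely different route from the paper. The paper works directly with the nonlinear flow and the order structure: it slices $K$ by an affine translate of $\mathrm{Im}(\Gamma)$ to obtain a bounded convex body $P$ (this is where Lemmas~\ref{lem:transversal cones bounded section} and~\ref{lem: cone contains set with full dimension} enter), takes $B=P-P$ as the unit ball on each stoichiometric class, and then proves non-expansion by an order-interval argument---for nearby points $y,z$ it finds a common $K$-lower bound $w$, applies monotonicity to get $\phi_t(w)\le\phi_t(y)$ and $\phi_t(w)\le\phi_t(z)$, and concludes that $\phi_t(y),\phi_t(z)$ again sit in a single translate of $P$, hence within $B$ of each other. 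You instead linearize along pairs of trajectories, observe that the averaged Jacobian inherits cross-positivity so that $\Phi(t)K\subseteq K$, and build the norm from a conserved linear functional $\eta\in\mathrm{Im}(\Gamma)^\perp$ that is strictly positive on $K\setminus\{0\}$; non-expansion then drops out of the two identities $\Phi(t)K\subseteq K$ and $\eta^\top\Phi(t)=\eta^\top$. Your approach makes the underlying mechanism (a first integral transverse to the invariant cone) completely explicit and sits closer to standard contraction-analysis arguments; the paper's approach is more elementary in that it never linearizes and uses only the definition of monotonicity on the full flow, while producing a concrete unit ball $P-P$ that ties directly to the cross-section construction of Mierczy\'nski that the paper cites. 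Your concern (ii) is the right one to flag and you resolve it correctly: both proofs produce a norm defined only on $\mathrm{span}(K)$, which contains $\mathrm{Im}(\Gamma)$ by Lemma~\ref{lem: cone contains set with full dimension}, so non-expansivity is being asserted within stoichiometric classes---this is exactly what the paper claims as well.
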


\begin{proof}
    By Lemma \ref{lem:transversal cones bounded section} and Lemma \ref{lem: cone contains set with full dimension} we can find $x$ such that $P = K \cap (x +\mbox{Im}(\Gamma))$ is a bounded and convex set with the same dimension as $\mbox{Im}(\Gamma)$. Now form the symmetric set $B = P - P$. We claim our system is non-expansive with respect to the norm induced by $B$ on each stoichiometric compatibility class. Pick any $y \in \mbox{int}(\mathbb{R}_{\geq}^n)$. Pick $\alpha > 0$ small enough such that for each $z \in y + \alpha B$ we can find $w \in  \mbox{Aff}\{y,y + x + \mbox{Im}(\Gamma)\}$ such that $w \leq z$ and $w \leq y$, where the partial ordering is the one induced by $K$. Note also we can always find such an $\alpha$, since we just need to pick it small enough so that we can find a $z \in y + \alpha B$ such that we can find a $w$ with the property that the convex hull of $w+ 2\alpha P$ is contained in $\mbox{int}(\mathbb{R}_{\geq}^n)$. 

    Due to monotonicity, for small time $\phi_t(w) \leq \phi_t(z)$ and $\phi_t(w) \leq \phi_t(y)$. Thus, there exists a translate of $P$, call it $s + P$, such that $\phi_t(z),\phi_t(y) \in s + P$. Let $l = \phi_t(z) - \phi_t(y)$. Find the smallest $\beta > 0$ such that $\beta l + s + P$ has $\phi_t(y) \in \mbox{bd}(\beta l + s + P)$. Note that we will still have $\phi_t(z) \in \beta l + s + P$, and thus in fact $\phi_t(z) \in \phi_t(y) + (\beta l + s + P -\phi_t(y)) \subseteq \phi_t(y) + B$ (since $0 \in \beta l + s + P -\phi_t(y)$). Thus, we can conclude we are non-expansive with respect to the norm induced by $B$.
\end{proof}

Note that the strongly connected R-graph property was used to guarantee that $P$ has the same dimension as $\mbox{Im}(\Gamma)$. If we assume a priori that the dimensions are the same (perhaps it is the only case we are interested in, so we ignore the more singular cases), then the strong connectivity assumption is unnecessary. 

Indeed, one can prove a more general version of Lemma \ref{lem: cone contains set with full dimension}: If there is a point in a figure, which is closed under the operations of Lemma \ref{lem:produce_new_points}, that is nonzero in coordinate $i$, then every reaction whose kinetics can be impacted by species $i$, as well as every reaction reachable through a directed path in the R-graph from one of these reactions, must be contained in the affine span of the figure. We will not use this more general statement here, however, because we are primarily interested in situations where the figure’s dimension is at least that of the stoichiometric subspace.

Lemma \ref{lem:lift a cone} relates directly to a construction in theorem 7 of \cite{mierczyński2012cooperativeirreduciblesystemsordinary}. The construction in \cite{mierczyński2012cooperativeirreduciblesystemsordinary} also demonstrates how one can go from a monotonicity condition to a contractivity condition.

\section{Building a cone from a vector}

Using Lemma \ref{lem:subtract_off}, we can describe a procedure to build up a cone given some starting vector.

\begin{lem}
\label{lem:produce_new_points}
    Suppose we have a network $\Gamma$ that is monotone with respect to a cone $K$, a vector $v \in K$, reaction vectors $\Gamma_i \in \Gamma$, and real numbers $\alpha,\alpha_1,\alpha_2 > 0$. Then for any index $i$, we have the following:

    \begin{enumerate}

        \item Suppose $v \in Q_1^+(\Gamma_i)$ and $\alpha$ is the minimum number satisfying $v- \alpha \Gamma_i \in Q_1(-\Gamma_i)$. Then $v- \alpha \Gamma_i \in K.$
        \item Suppose $v \in Q_1^-(\Gamma_i)$ and $\alpha$ is the minimum number satisfying $v+ \alpha \Gamma_i \in Q_1(\Gamma_i)$. Then $v+ \alpha \Gamma_i \in K.$
        \item Suppose $v \not\in Q_1(\Gamma_i) \cup Q_1(-\Gamma_i)$ and $\alpha_1, \alpha_2$ are the minimum numbers satisfying $v+ \alpha_1 \Gamma_i \in Q_1(\Gamma_i)$ and $v- \alpha_2 \Gamma_i \in Q_1(-\Gamma_i)$, respectively. Then $v+ \alpha_1 \Gamma_i \in K$ and $v- \alpha_2 \Gamma_i \in K$.

    \end{enumerate}
\end{lem}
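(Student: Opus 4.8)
The plan is to reduce to a single reaction and then reuse, in a slightly sharper form, the flow argument behind the necessity half of Lemma~\ref{lem:subtract_off}. Since $\Gamma$ is monotone with respect to $K$, Lemma~\ref{lem:one_reac_makes_strict_monotone} gives that the one-reaction network $\dot x=\Gamma_i R_i(x)$ is monotone with respect to $K$ as well, so it suffices to prove all three statements for a network whose only reaction is $\Gamma_i$; I fix $i$ and write $\Gamma_1$. Because $K$ is a cone I may rescale $v$ to be as small as needed, and $v\pm\alpha\Gamma_1$, $v+\alpha_1\Gamma_1$, $v-\alpha_2\Gamma_1$ rescale in step with it, so nothing is lost by assuming every vector in play is small.

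The structural fact I will use is that for a single reaction every trajectory is $\phi_t(z)=z+h_z(t)\Gamma_1$ for a scalar function $h_z$, so the difference $w(t):=\phi_t(y)-\phi_t(x)$ always stays on the line $\{(y-x)+s\Gamma_1:s\in\mathbb R\}$; with $y-x=v$ this reads $w(t)=v+\sigma(t)\Gamma_1$, where $\sigma(0)=0$ and $\sigma'(t)=R_1(\phi_t(y))-R_1(\phi_t(x))$. For part~1, pick $x,y$ in $\mbox{int}(\mathbb R^n_{\ge})$ with $y-x=v$; then $x\le y$, so monotonicity forces $w(t)\in K$ for all $t$, and I run a continuation along the segment from $v$ toward $v-\alpha\Gamma_1$. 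Let $\alpha^\star$ be the supremum of the $\beta\in[0,\alpha]$ with $v-\beta\Gamma_1\in K$; this set contains $0$, and since $K$ is closed, $v-\alpha^\star\Gamma_1\in K$. If $\alpha^\star<\alpha$, then $v-\alpha^\star\Gamma_1$ has not yet reached $Q_1(-\Gamma_1)$, hence it lies in $Q_1^+(\Gamma_1)$ or outside both $Q_1(\pm\Gamma_1)$, and the sign computation in the proof of Lemma~\ref{lem:signs on dual faces} says some choice of general kinetics makes $\partial R_1\cdot(v-\alpha^\star\Gamma_1)<0$; using that the network is monotone with respect to $K$ for \emph{every} such choice, we get $w(t)\in K$ along the resulting trajectories, and $w(t)=v-(\alpha^\star-\sigma(t))\Gamma_1$ with $\sigma(t)<0$ for small $t>0$, so $v-(\alpha^\star+|\sigma(t)|)\Gamma_1\in K$ with $\alpha^\star+|\sigma(t)|\in(\alpha^\star,\alpha)$, contradicting the choice of $\alpha^\star$. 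Hence $\alpha^\star=\alpha$ and $v-\alpha\Gamma_1\in K$.

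Parts~2 and~3 are the mirror images. For part~2, $v\in Q_1^-(\Gamma_1)$ makes $\partial R_1\cdot(v+\sigma\Gamma_1)$ choosable strictly positive as long as $v+\sigma\Gamma_1$ has not met $Q_1(\Gamma_1)$, so the same continuation drives $\sigma$ up to $+\alpha$ and gives $v+\alpha\Gamma_1\in K$. For part~3, $v$ lying outside $Q_1(\Gamma_1)\cup Q_1(-\Gamma_1)$ means $\partial R_1$ applied to the current difference can be made of either sign, so we may continue upward until $v+\alpha_1\Gamma_1$ first meets $Q_1(\Gamma_1)$ or downward until $v-\alpha_2\Gamma_1$ first meets $Q_1(-\Gamma_1)$, yielding $v+\alpha_1\Gamma_1\in K$ and $v-\alpha_2\Gamma_1\in K$. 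Closedness of $K$ is what captures the endpoint in each case.

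The step I expect to be the main obstacle is the one just used: that for a single reaction a legitimate choice of general kinetics gives genuine control of the sign of $R_1(\phi_t(y))-R_1(\phi_t(x))$ at a prescribed pair of nearby interior points, and that this control is delimited exactly by membership of the difference vector in $Q_1(-\Gamma_1)$ (resp.\ $Q_1(\Gamma_1)$). This is the same computation underpinning Lemma~\ref{lem:signs on dual faces} and the necessity direction of Lemma~\ref{lem:subtract_off}, and it is the place where one must invoke the general-kinetics axioms rather than differentiate blindly. The remaining points — existence and positivity of the minimal $\alpha$ (resp.\ $\alpha_1,\alpha_2$), which follow from $v\in Q_1(\Gamma_1)\setminus Q_1(-\Gamma_1)$ (resp.\ $v$ lying outside both), and keeping the trajectories interior long enough, arranged by taking $v$ small and $x,y$ far from $\mbox{bd}(\mathbb R^n_{\ge})$ — are routine.
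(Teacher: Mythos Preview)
Your proposal is correct and rests on the same flow argument as the paper's proof; the only difference is packaging. The paper simply cites the necessity direction of Lemma~\ref{lem:subtract_off} to obtain some $\beta$ with $v-\beta\Gamma_i\in K\cap Q_1(-\Gamma_i)$ and then uses convexity of $K$ together with $\alpha\le\beta$ to conclude $v-\alpha\Gamma_i\in K$, whereas you redo that flow argument via an explicit supremum/continuation---same idea, slightly more work.
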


\begin{proof}
To prove this lemma, we essentially only need to invert Lemma \ref{lem:subtract_off}. By Lemma \ref{lem:subtract_off} if $v \in Q_1^+(\Gamma_i)$ we can find $v' \in Q_1(-\Gamma_i)$ such that $\beta \Gamma_i = v - v'$. Since our cone is convex, if $\alpha \leq \beta $ then $v - \alpha \Gamma_i$ is also in $K$. Thus, since $v' \in Q_1(-\Gamma_i)$ there exists a minimal $\alpha \leq \beta$ such that $v - \alpha \Gamma_i \in K$, and this vector must always be in our cone(given that $v$ is in our cone). The reasoning for $v \in Q_1^-(\Gamma_i)$ is the same.

If $v \not\in Q_1(\Gamma_i) \cup Q_1(-\Gamma_i)$ then again we can use Lemma \ref{lem:subtract_off}, noting we have $\beta_1$ and $\beta_2$ such that $v+ \beta_1 \Gamma_i \in Q_1(\Gamma_i)$ and $v- \beta_2 \Gamma_i \in  Q_1(-\Gamma_i)$. From this we know that the for  the minimal $\alpha_1$ and $\alpha_2$, such that $\alpha_1 \leq \beta_1$ and $\alpha_2 \leq \beta_2$, we must also have that $v+ \alpha_1 \Gamma_i \in Q_1(\Gamma_i)$ and $v- \alpha_2 \Gamma_i \in  Q_1(-\Gamma_i)$.
\end{proof}

For  $v \in Q_1^+(\Gamma_i)$ refer to forming the vector $v- \alpha \Gamma_i \in k \cap Q_1(-\Gamma_i)$ as \textit{operation 1}, for $v \in Q_1^-(\Gamma_i)$ refer to forming the vector $v+ \alpha \Gamma_i \in k \cap Q_1(-\Gamma_i)$ as \textit{operation 2}, and for $v \not\in Q_1(\Gamma_i) \cup Q_1(-\Gamma_i)$ refer to forming the vectors $v+ \alpha_1 \Gamma_i \in k \cap Q_1(\Gamma_i)$ and $v- \alpha_2 \Gamma_i \in k \cap Q_1(-\Gamma_i)$ as \textit{operation 3}. We sometimes indicate the (possibly multivalued) map of operation $i$ by $o_i(k)$. We say a cone $K$ is \textit{closed} under the operations if the newly formed vectors remain in $K$, i.e., for all $i$ and all $k \in K$ we have that $o_i(k) \subseteq K$.

\begin{thm}
\label{cor:monotone iff subtract off}
    A reaction network is monotone with respect to a cone $K$ if and only if $K$ is closed under the operations of Lemma \ref{lem:produce_new_points} (i.e., we have $o_i(k) \subseteq K$ for all $k \in K$ and $i \in \{1,2,3\}$). The reaction network is non-expansive with respect to a norm with a unit ball $B$ if and only if $B$ is closed under the operations of Lemma \ref{lem:produce_new_points}.
\end{thm}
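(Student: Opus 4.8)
The plan is to dispatch the monotonicity equivalence first by reading it off the preceding lemmas, and then to obtain the non-expansivity equivalence from it through a one-coordinate lift.

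For the monotonicity statement the forward implication is immediate: if the network is monotone with respect to $K$, then Lemma~\ref{lem:produce_new_points} says exactly that $o_i(v)\subseteq K$ for every $v\in K$ and every $i\in\{1,2,3\}$, i.e.\ $K$ is closed under the operations. Conversely, suppose $K$ is closed under the operations. By Lemma~\ref{lem:one_reac_makes_strict_monotone} it suffices to show each single reaction $\Gamma_i$ is monotone with respect to $K$, and by Lemma~\ref{lem:signs on dual faces} that reduces to a sign condition on $\Gamma_i^{\top}k'$ for every boundary vector $k$ and every $k'\in k^{*}$, the required sign being determined by which of $Q_1^{+}(\Gamma_i)$, $Q_1^{-}(\Gamma_i)$, $Q_2(\Gamma_i)$, or the complement of $Q_1(\Gamma_i)\cup Q_1(-\Gamma_i)$ contains $k$. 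In each region the corresponding operation applies to $k$ and forces the right sign: if $k\in Q_1^{+}(\Gamma_i)$, operation $1$ gives $k-\alpha\Gamma_i\in K$, so pairing against $k'\in k^{*}\subseteq K^{*}$ and using $k^{\top}k'=0$ yields $-\alpha\,\Gamma_i^{\top}k'=(k-\alpha\Gamma_i)^{\top}k'\ge 0$, hence $\Gamma_i^{\top}k'\le 0$; operation $2$ symmetrically forces $\Gamma_i^{\top}k'\ge 0$ when $k\in Q_1^{-}(\Gamma_i)$; operation $3$ forces $\Gamma_i^{\top}k'=0$ off $Q_1(\Gamma_i)\cup Q_1(-\Gamma_i)$; and on $Q_2(\Gamma_i)$ there is nothing to check. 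Thus the conditions of Lemma~\ref{lem:signs on dual faces} hold for every reaction, and Lemma~\ref{lem:one_reac_makes_strict_monotone} finishes the argument. (No separate case is needed when $\Gamma_i\in K$: then $\Gamma_i^{\top}k'\ge 0$ holds automatically, merely consistent with the above.)

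For the non-expansivity statement I would reduce to the monotonicity half via a lift. Let $B$ be the candidate unit ball, which we take full-dimensional in the stoichiometric subspace $\mbox{Im}(\Gamma)$ (the space on which the norm actually acts along a stoichiometric class). Let $\hat\Gamma$ be the lift by one zero row and $\hat K=\cone(B\times\{1\})\subseteq\mathbb{R}^{n+1}$, a pointed closed convex cone with $\hat K\cap\mbox{Im}(\hat\Gamma)=\{0\}$, since $\mbox{Im}(\hat\Gamma)=\mbox{Im}(\Gamma)\times\{0\}$ lies in the height-$0$ hyperplane which meets $\hat K$ only at the origin. The key claim is that $\Gamma$ is non-expansive for the norm with unit ball $B$ if and only if $\hat\Gamma$ is monotone with respect to $\hat K$. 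The ``if'' direction is Lemma~\ref{lem:lift a cone}, whose transversality hypothesis we have just verified: its bounded section $P=\hat K\cap((x_0,1)+\mbox{Im}(\hat\Gamma))$ is a scaled copy of $B$, so the norm it produces, induced by $P-P$, equals $|\cdot|$ up to a positive scalar, and non-expansivity is insensitive to rescaling. The ``only if'' direction is a short direct check: since the appended species has $\dot y=0$ we have $\phi_t(x,y)=(\psi_t(x),y)$ with $\psi$ the flow of $\Gamma$, and $(x_1,y_1)\le(x_2,y_2)$ with respect to $\hat K$ is equivalent to $y_1\le y_2$ and $|x_2-x_1|\le y_2-y_1$ (with $x_1,x_2$ then in a common stoichiometric class of $\Gamma$); non-expansivity gives $|\psi_t(x_2)-\psi_t(x_1)|\le y_2-y_1$, i.e.\ $\phi_t(x_1,y_1)\le\phi_t(x_2,y_2)$. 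Granting the claim, apply the monotonicity part just proved to $\hat\Gamma$ and $\hat K$ (using the invariant-subspace corollary in place of Theorem~\ref{thm:monotone}, since $\hat K$ need not be solid in $\mathbb{R}^{n+1}$): monotonicity of $\hat\Gamma$ is equivalent to $\hat K$ being closed under the operations of $\hat\Gamma$. Since $\hat\Gamma_i=(\Gamma_i,0)$ has a vanishing last coordinate, $Q_1(\hat\Gamma_i)=Q_1(\Gamma_i)\times\mathbb{R}$ and likewise for $Q_1(-\hat\Gamma_i)$ and $Q_2(\hat\Gamma_i)$, so each $o_i$ for $\hat\Gamma$ preserves the height coordinate; combined with the invariance of the $Q_1$-regions and of the minimal step sizes under positive scaling, closure of $\hat K$ is equivalent to closure of its height-$1$ slice $B$ under the $\Gamma$-operations. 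Chaining the equivalences gives the statement.

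I expect the main obstacle to be the bookkeeping in this reduction rather than any conceptual leap: pinning down that $P-P$ reproduces exactly the prescribed norm (and keeping straight the dimension of $B$ relative to $\mbox{Im}(\Gamma)$, which is what forces the invariant-subspace corollary in place of Theorem~\ref{thm:monotone}), and accommodating the strong-connectivity hypothesis carried by Lemma~\ref{lem:lift a cone} — either by running the argument on each strongly connected component of the R-graph, or by invoking the remark after that lemma that strong connectivity is unnecessary once the section already has full stoichiometric dimension. A minor point worth stating explicitly in the monotonicity converse is that the operations are defined on all of $K$, not only on $\mbox{bd}(K)$; this loses nothing, since an interior vector $k$ has $k^{*}=\{0\}$ and hence imposes no condition in Lemma~\ref{lem:signs on dual faces}.
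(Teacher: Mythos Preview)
Your proposal is correct and follows essentially the same route as the paper: both obtain the non-expansivity half by lifting and replacing the ball $B$ with the cone $\hat K=\cone(B\times\{1\})$, then invoking the monotonicity half, and both read the monotonicity equivalence off Lemmas~\ref{lem:subtract_off}/\ref{lem:produce_new_points}. Your write-up is in fact more careful than the paper's terse version---you make the lift explicit rather than citing an external definition, and by routing the sufficiency argument through Lemma~\ref{lem:signs on dual faces} instead of Lemma~\ref{lem:subtract_off} you cleanly avoid the latter's standing hypothesis $\Gamma_1\notin K$.
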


\begin{proof}
    First we will note that by lifting our network, and constructing a cone as in Definition 11 from \cite{duvall2024interplay} so that now we can use Lemma \ref{lem:subtract_off} or Lemma \ref{lem:produce_new_points} on this constructed cone. Note that this is completely equivalent to applying Lemma \ref{lem:subtract_off} or Lemma \ref{lem:produce_new_points} directly on the unit ball $B$. Thus, we can apply our arguments equally to monotonicity and non-expansivity.

    Now note if we have that $o_i(k) \subseteq K$ (or $o_i(b) \subseteq B$ for $b \in B$) then our network must be monotone (respectively, non-expansive) since we satisfying the conditions in Lemma \ref{lem:subtract_off}. Thus, we have sufficiency. For necessity, note that by Lemma \ref{lem:produce_new_points} if the system is monotone (non-expansive), we must have $o_i(k) \subseteq K$ (or $o_i(b) \subseteq B$ for $b \in B$). 
\end{proof}

\section{Strong monotonicity and weak contractivity}

While we are primarily concerned with computing cones for which a given reaction network is monotone, monotonicity is primarily of interest due to the nice dynamical conclusions one can draw from it. In particular, if one is strongly monotone then there are nice convergence properties for trajectories. This motivates this section, where we show how under certain conditions, monotonicity with respect to a cone $K$ in fact implies strong monotonicity with respect to $K$. We will first describe a well known sufficient condition for strong monotonicity.

\begin{thm}
\label{thm:stong monotone strengthened}
\cite{HIRSCH2006239} Suppose we have a system $\dot{x} = f(x)$ and a proper, pointed and convex cone $K$. Suppose also that $k_2^{\top} \mathcal{J}_f k_1 \geq 0$ for all $k_1 \in \partial K$ and $k_2 \in K^*$ such that $k_2^{\top} k_1 = 0$. Suppose for each $k_1 \in \partial K$ we can find a $k_2 \in K^*$ such that $k_2^{\top} k_1 = 0$ and 
\begin{equation}
k_2^{\top} \mathcal{J}_f k_1 > 0 
\end{equation}
Then our system is strongly monotone with respect to $K$.
\end{thm}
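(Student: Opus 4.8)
The plan is to upgrade forward invariance of $K$ along the flow---which Theorem~\ref{thm:monotone} already provides, since the first hypothesis is exactly its cross-positivity condition---into forward invariance of $\mbox{int}(K)$ for strictly positive times. Fix $x \leq y$ with $x \neq y$, so that $y-x \in K \setminus \{0\}$, and set $w(t) = \phi_t(y) - \phi_t(x)$. Applying the fundamental theorem of calculus to $f(\phi_t(y)) - f(\phi_t(x))$ shows that $w$ solves the linear nonautonomous equation $\dot w(t) = \tilde A(t)\,w(t)$ with $\tilde A(t) = \int_0^1 \mathcal{J}_f\big(\phi_t(x) + s\,w(t)\big)\,ds$. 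First I would record two facts: (i) $w(t) \in K$ for all $t \geq 0$, which is just monotonicity from Theorem~\ref{thm:monotone}; and (ii) $w(t) \neq 0$ for all $t$, since $\phi_t(x) = \phi_t(y)$ would force $x = y$ by injectivity of the flow of the $C^1$ field $f$.

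The heart of the argument is to show $w(t_0) \in \mbox{int}(K)$ for every $t_0 > 0$. Suppose not, so $w(t_0) \in \mbox{bd}(K) \setminus \{0\}$ for some $t_0 > 0$, and put $k_1 = w(t_0)$. The second hypothesis supplies $k_2 \in K^*$ with $k_2^{\top} k_1 = 0$ and $k_2^{\top} \mathcal{J}_f(z)\,k_1 > 0$ for every admissible state $z$; averaging over $z = \phi_{t_0}(x) + s\,w(t_0)$, $s \in [0,1]$, yields $k_2^{\top} \tilde A(t_0)\,k_1 > 0$. Now consider the scalar function $h(t) = k_2^{\top} w(t)$. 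Because $k_2 \in K^*$ and $w(t) \in K$, we have $h(t) \geq 0$ throughout, while $h(t_0) = k_2^{\top} k_1 = 0$, so $h$ has a global minimum at $t_0$, which lies in the interior of the time interval; hence $h'(t_0) = 0$. But $h'(t_0) = k_2^{\top}\dot w(t_0) = k_2^{\top}\tilde A(t_0)\,w(t_0) > 0$, a contradiction. Therefore $w(t) \in \mbox{int}(K)$, i.e. $\phi_t(y) - \phi_t(x) \in \mbox{int}(K)$, for all $t > 0$, which is strong monotonicity with respect to $K$.

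The step I expect to need the most care is justifying that the averaged matrix $\tilde A(t_0)$ inherits the strict inequality---that is, that the witness $k_2$ furnished by the hypothesis for the single direction $k_1$ can be used against $\tilde A(t_0)k_1$. This is immediate if the hypothesis is read, in parallel with Theorem~\ref{thm:monotone} and the notation $\mathcal{J}_f$, as holding at all admissible states simultaneously, so that each integrand $k_2^{\top}\mathcal{J}_f(\phi_{t_0}(x)+s\,w(t_0))k_1$ is positive. If instead one only has strict positivity at isolated states, I would run the same minimum-principle argument directly on the variational equation $\dot z = \mathcal{J}_f(\phi_t(x))\,z$ for $z(t) = D\phi_t(x)\,(y-x)$, where the Jacobian is evaluated at the single point $\phi_{t_0}(x)$, and then recover the claim for $w = \phi_t(y)-\phi_t(x)$ by writing $w(t) = \int_0^1 D\phi_t\big(x+s(y-x)\big)(y-x)\,ds$ and using that the average of a continuous family of vectors lying in the open convex cone $\mbox{int}(K)$ again lies in $\mbox{int}(K)$. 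A secondary technical point is the regularity of $f$ on $\mbox{bd}(\mathbb{R}^n_{\geq})$; restricting attention to trajectories in $\mbox{int}(\mathbb{R}^n_{\geq})$, where $R$ is $C^1$, removes this concern.
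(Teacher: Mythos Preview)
The paper does not supply its own proof of this theorem; it is quoted from \cite{HIRSCH2006239} as a known sufficient condition for strong monotonicity, so there is nothing in the paper to compare your argument against.

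That said, your proof is essentially the standard one and is correct. You pass to the secant equation $\dot w = \tilde A(t)\,w$, invoke cross-positivity (Theorem~\ref{thm:monotone}) to keep $w(t)\in K\setminus\{0\}$, and then, assuming $w(t_0)\in\partial K$ for some $t_0>0$, test against the strict witness $k_2$: the scalar $h(t)=k_2^{\top}w(t)$ is nonnegative, vanishes at the interior time $t_0$, yet has $h'(t_0)>0$, which is impossible. One cosmetic point: you do not need $t_0$ to be a \emph{global} minimum; it suffices that $h\geq 0$ on a left neighborhood of $t_0$, so $h'(t_0)\leq 0$, contradicting $h'(t_0)>0$.

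The only genuinely delicate step is the one you flag yourself, namely whether the witness $k_2$ can be used against the averaged Jacobian $\tilde A(t_0)$. Under the paper's reading of the hypothesis---parallel to Theorem~\ref{thm:monotone}, where $\mathcal{J}_f$ stands for $\mathcal{J}_f(x)$ at every admissible $x$---each integrand $k_2^{\top}\mathcal{J}_f(\cdot)k_1$ is strictly positive and the averaging goes through directly. Your fallback via the variational equation and averaging inside $\mbox{int}(K)$ is also sound, though unnecessary here.
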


Next we prove that the R-graph being strongly connected can be a sufficient condition to guarantee strong monotonicity.

\begin{thm}
\label{thm:strongly connected to strongly monotone}
    Suppose we have a reaction network $\Gamma$ with a strongly connected R-graph. Suppose the network is monotone with respect to a cone $K \subseteq \mbox{Im}(\Gamma)$ or it is non-expansive for some norm. Then it is strongly monotone with respect to $K$ or weakly contracting with respect to the norm, respectively.
\end{thm}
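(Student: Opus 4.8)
The plan is to derive strong monotonicity (resp.\ weak contractivity) from ordinary monotonicity (resp.\ non-expansivity) by verifying the strict cross-positivity hypothesis of Theorem~\ref{thm:stong monotone strengthened}, with the geometric input coming from strong connectivity exactly as in Lemma~\ref{lem: cone contains set with full dimension}. First I would dispose of the non-expansive case by the lifting correspondence: by the last sentence of Theorem~\ref{cor:monotone iff subtract off} and the construction in Lemma~\ref{lem:lift a cone}, non-expansivity with unit ball $B$ is the same as monotonicity of a lifted network $\hat\Gamma$ with respect to a cone $\hat K$ transversal to $\mathrm{Im}(\hat\Gamma)$, and lifting (appending zero rows) alters neither the supports of the reaction vectors nor the R-graph; moreover, rerunning the last paragraph of the proof of Lemma~\ref{lem:lift a cone} with strict inequalities shows that strong monotonicity of the lifted cone system forces $|\phi_t(z)-\phi_t(y)|<|z-y|$. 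So it suffices to treat the monotone case, and there---restricting to the invariant set $\{S+j\}\cap\mathbb{R}^n_{\geq}$ with $S=\mathrm{Im}(\Gamma)$ (as in the corollary to Theorem~\ref{thm:monotone}) and invoking Lemma~\ref{lem: cone contains set with full dimension}---we may assume $K$ is full dimensional inside $S$.

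Next I would reduce the hypothesis of Theorem~\ref{thm:stong monotone strengthened} to a purely geometric statement. Fix $k_1\in\mathrm{bd}(K)\setminus\{0\}$ and an interior point $x$; let $F$ be the face of $K$ with $k_1$ in its relative interior and $W=\mathrm{span}(F)$, a proper subspace of $S$, so that the dual face $k_1^{*}$ spans the orthogonal complement of $W$ inside $S$ (certainly when $K$ is polyhedral, the case of primary interest). By Lemmas~\ref{lem:one_reac_makes_strict_monotone} and~\ref{lem:signs on dual faces}, every summand of $k_2^{\top}\mathcal J_f(x)k_1=\sum_i(\Gamma_i^{\top}k_2)(\partial R_i(x)k_1)$ is nonnegative for each $k_2\in k_1^{*}$; consequently, if some reaction $i$ satisfies both $\partial R_i(x)k_1\neq 0$ and $\Gamma_i\notin W$, then some $k_2\in k_1^{*}$ has $\Gamma_i^{\top}k_2\neq 0$ and hence $k_2^{\top}\mathcal J_f(x)k_1>0$, which is the witness required by Theorem~\ref{thm:stong monotone strengthened}. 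Conversely, the absence of such a witness at $k_1$ is exactly the statement $\mathcal J_f(x)k_1\in W$, equivalently: for every reaction $i$, either $k_1$ does not drive $R_i$ (i.e.\ $\partial R_i(x)k_1=0$) or $\Gamma_i\in W$. Thus the theorem reduces to ruling out this degenerate configuration for every boundary $k_1$ and every (generic) interior $x$.

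To rule it out I would mimic the proof of Lemma~\ref{lem: cone contains set with full dimension} \emph{inside the face $F$}. Since $K$ is pointed, $K\cap W=F$, so applying the operations of Lemma~\ref{lem:produce_new_points} with any reaction $\Gamma_i\in W$ to a vector of $F$ keeps us in $F$, and, taking the step parameter small when convenient, we can keep the produced vectors in $\mathrm{relint}(F)$. If the degenerate configuration held at $k_1$, then---propagating the characterization of the previous paragraph along the chain of operations and using that every reaction \emph{driven} by a vector of $F$ must have its reaction vector in $W$---the supports of the produced vectors would grow, exactly as in Lemma~\ref{lem: cone contains set with full dimension}, to cover every reaction reachable in the R-graph from $\mathrm{supp}(k_1)$; strong connectivity makes this every reaction, so the produced vectors would span, after taking differences, all of $S$, contradicting $F\subseteq W\subsetneq S$. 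Hence no degenerate $k_1$ exists, Theorem~\ref{thm:stong monotone strengthened} applies, and the (restricted) system is strongly monotone---equivalently, in the lifted picture, weakly contracting.

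The main obstacle is this last step: the in-$W$ operations only move us within the single face $F$, yet one must show that strong connectivity nonetheless drags every reaction vector into $W$. This needs the implication ``a vector of $F$ drives $R_i$ $\Rightarrow$ $\Gamma_i\in W$'' pushed across a spanning subset of $W$ (not merely at $k_1$), together with careful bookkeeping---paralleling Lemma~\ref{lem: cone contains set with full dimension}---of how successive operations enlarge the supports of the vectors produced. A secondary, more technical point is robustness in the base point $x$: an accidental, non-structural vanishing $\partial R_i(x)k_1=0$ can occur only on a nowhere-dense set of $x$, and should be absorbed by working with the restricted system and, where needed, exploiting the freedom in the choice of general kinetics; making this rigorous is what I expect to require the most care.
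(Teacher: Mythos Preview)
Your proposal is correct and follows essentially the same route as the paper: reduce the non-expansive case to the monotone one (the paper just says ``the non-expansive case can be shown in exactly the same manner''), fix a boundary vector $k_1$, and argue by contradiction that if no strict witness $k_2$ existed then the operations of Lemma~\ref{lem:produce_new_points} would keep you in a proper face while strong connectivity forces the face to have full dimension. Your face/span formulation $W=\mathrm{span}(F)$ is a slightly cleaner packaging of what the paper does with a single supporting hyperplane $S=H\cap K$, and the two obstacles you flag (propagating ``drives $R_i\Rightarrow\Gamma_i\in W$'' along the face, and genericity in $x$) are exactly the points the paper's proof handles somewhat informally.
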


\begin{proof}
    We will prove the monotonicity case; the non-expansive case can be shown in exactly the same manner. Take a vector $k_1 \in \partial K$. Let $L$ be the set of reactions which share a species with $k_1$. We will show that there is at least one reaction in $L$ and a vector $k_2$ (with $k_2^{\top} k_1 = 0$) for which $\Gamma_i^{\top} k_2 > 0$. 
    
    Suppose this was not the case. Consider the boundary of $S = H \cap K$ where $H$ is the supporting hyperplane of $K$ at $k_1$ with normal $k_2$. Note that the boundary must contain all species involved in any reaction in $L$. Since the R-graph is strongly connected, at $k_1$ we have that $L$ must in fact contain all reactions, since any reaction that shares species with a reaction in $L$ must also be in $L$. This is due to Lemma \ref{lem:produce_new_points}, we can continue using the operations from Lemma \ref{lem:produce_new_points} to move in the direction of different reactions. But since we assumed $\Gamma_i^{\top} k_2 = 0$ always, we always stay on $S$. Hence, $S$ has the same dimension as the cone $K$, a contradiction. Thus, there must exist a reaction such that $\Gamma_i^{\top} k_2 > 0$. 
    
    Note that we must also have $\partial R_i k_1 > 0$. We cannot have $\partial R_i k_1 < 0$ anywhere on $S$ since then we could move outside the cone by Lemma \ref{lem:produce_new_points}. Since the R-graph is strongly connected we can use the operations from Lemma \ref{lem:produce_new_points} to arrive at a point $k_1'$ on $S$ for which $\partial R_i k_1' > 0$. This implies at least one point on the boundary of $S$, call such a point $k_1''$, has $ \partial R_i k_1'' > 0$. Since $k_1$ can be written as a convex sum including $k_1''$, we have $\partial R_i k_1 > 0$.
    
    Since $k_2^{\top} \mathcal{J}_f k_1 \geq (\partial R_i k_1)(k_2^{\top} \Gamma_i ) > 0$, we see that the system satisfies the condition in Theorem \ref{thm:strongly connected to strongly monotone} for $K$, and thus the system is strongly monotone with respect to $K$.
\end{proof}

\section{A monotonicity dichotomy}

In this section, suppose we have a reaction network $\Gamma$ (possibly lifted) that is monotone with respect to $K$. We will now demonstrate a certain dichotomy: either the reaction network is non-expansive or $K \cap \mbox{Im}(\Gamma)$ contains a nonzero vector.

\begin{lem}
\label{lem:intersect a cone}
     Suppose $K \cap \mbox{Im}(\Gamma) \neq 0$. Our reaction network is monotone with respect to $K \cap \mbox{Im}(\Gamma)$.
\end{lem}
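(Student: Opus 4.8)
The claim is that if $K \cap \mathrm{Im}(\Gamma) \neq 0$, then the network is monotone with respect to the smaller cone $K' := K \cap \mathrm{Im}(\Gamma)$. The natural strategy is to verify the operation-closure criterion of Theorem \ref{cor:monotone iff subtract off}: I must show that $K'$ is closed under operations 1, 2, and 3 of Lemma \ref{lem:produce_new_points}. Since $K$ is already closed under these operations (because the network is monotone with respect to $K$), and $\mathrm{Im}(\Gamma)$ is a linear subspace, the key observation is that each operation adds or subtracts a multiple of a reaction vector $\Gamma_i$, and every $\Gamma_i \in \mathrm{Im}(\Gamma)$. So if $v \in K'$, then $v \in K$ and $v \in \mathrm{Im}(\Gamma)$; applying operation $j$ gives a vector $v \pm \alpha \Gamma_i$ which lies in $K$ by closure of $K$, and lies in $\mathrm{Im}(\Gamma)$ because it is a sum of two elements of the subspace $\mathrm{Im}(\Gamma)$. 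Hence the new vector is in $K'$.

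**Details to check.**

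There is a subtlety: the operations in Lemma \ref{lem:produce_new_points} are defined via membership in the regions $Q_1^{\pm}(\Gamma_i)$, $Q_1(\Gamma_i)$, etc., and the step size $\alpha$ is the \emph{minimal} number landing the vector in the appropriate region $Q_1(\pm\Gamma_i)$. These regions are defined purely in $\mathbb{R}^n$ and do not reference $K$, so the region-membership of $v$ and the choice of minimal $\alpha$ are identical whether we regard $v$ as an element of $K$ or of $K'$. Thus $o_j(v)$ computed "inside $K'$" is literally the same set of vectors as $o_j(v)$ computed "inside $K$". I should also note $K'$ is still a closed, convex, pointed cone (intersection of a closed convex pointed cone with a linear subspace), so Theorem \ref{cor:monotone iff subtract off} applies to it. One mild point: Theorem \ref{cor:monotone iff subtract off} as stated is about a cone being closed under the operations, and the forward direction ("closed $\Rightarrow$ monotone") is exactly what I need; I should invoke the sufficiency direction of that theorem applied to $K'$.

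**Main obstacle.**

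There is essentially no hard analytic content here — the proof is a short subspace-closure argument. The only thing to be careful about is making sure the operations are "the same" in $K$ and in $K'$, i.e., that restricting the ambient cone does not change which operation applies to a given vector or the minimal step size; this follows because the $Q_1$-regions are intrinsic to $\mathbb{R}^n$. So I would write: let $v \in K' = K \cap \mathrm{Im}(\Gamma)$ and fix a reaction index $i$. Whichever of the three cases of Lemma \ref{lem:produce_new_points} applies (this depends only on which $Q_1$-region contains $v$, independent of the cone), the resulting vectors $o_i(v)$ are of the form $v + c\,\Gamma_i$ for suitable real $c$; each such vector lies in $K$ since the network is monotone w.r.t.\ $K$ (apply Lemma \ref{lem:produce_new_points} with cone $K$), and lies in $\mathrm{Im}(\Gamma)$ since $v, \Gamma_i \in \mathrm{Im}(\Gamma)$ and $\mathrm{Im}(\Gamma)$ is a subspace. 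Hence $o_i(v) \subseteq K'$, so $K'$ is closed under all the operations, and by Theorem \ref{cor:monotone iff subtract off} the network is monotone with respect to $K'$. $\qed$
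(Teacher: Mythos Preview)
Your proof is correct, but the paper takes a more direct route. Rather than invoking the operation-closure criterion of Theorem \ref{cor:monotone iff subtract off}, the paper argues straight from the definition of monotonicity: if $x - y \in K' = K \cap \mathrm{Im}(\Gamma)$, then $\phi_t(x) - \phi_t(y) \in K$ because the system is monotone with respect to $K$, and $\phi_t(x) - \phi_t(y) \in \mathrm{Im}(\Gamma)$ because stoichiometric compatibility classes are forward invariant; hence $\phi_t(x) - \phi_t(y) \in K'$. This two-line argument bypasses Lemma \ref{lem:produce_new_points} and Theorem \ref{cor:monotone iff subtract off} entirely. Your approach is a valid alternative that stays within the paper's algebraic framework, and your observation that the $Q_1$-regions and minimal step sizes are intrinsic to $\mathbb{R}^n$ (hence identical whether computed relative to $K$ or $K'$) is exactly the point needed to make that argument go through; but the paper's flow-based proof is more economical here since it needs only the invariance of $\mathrm{Im}(\Gamma)$ under the dynamics.
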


\begin{proof}
    Define $K' = K \cap \mbox{Im}(\Gamma)$. If $x-y \in K'$ then $\phi_t(x) - \phi_t(y) \in K'$ since we must have $\phi_t(x) - \phi_t(y) \in \mbox{Im}(\Gamma)$ (stoichiometric compatibility class is invariant) and $\phi_t(x) - \phi_t(y) \in K$ (the system is monotone with respect to $K$).
\end{proof}

\begin{thm}
    The network $\Gamma$ is either non-expansive on its stoichiometric compatibility class or it is monotone with respect to the non-trivial cone $K \cap \mbox{Im}(\Gamma) \neq \{0$\}.
\end{thm}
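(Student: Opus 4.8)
The plan is to split into two cases according to whether the cone $K$ (under which $\Gamma$ is assumed monotone) meets the stoichiometric subspace $\mathrm{Im}(\Gamma)$ only at the origin. This is a genuine dichotomy: either $K \cap \mathrm{Im}(\Gamma) = \{0\}$ or $K \cap \mathrm{Im}(\Gamma) \neq \{0\}$, and I would handle each possibility with one of the two lemmas already established in the excerpt.

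In the case $K \cap \mathrm{Im}(\Gamma) \neq \{0\}$, I would invoke Lemma~\ref{lem:intersect a cone} directly: it states that whenever $K \cap \mathrm{Im}(\Gamma)$ is nontrivial, the network is monotone with respect to $K' = K \cap \mathrm{Im}(\Gamma)$. Since dynamics stays within a stoichiometric compatibility class, $K'$ is a proper cone inside $\mathrm{Im}(\Gamma)$, so this is the desired conclusion (after possibly passing to the restricted system on $\{S+j\}\cap \mathbb{R}^n_\geq$ with $S = \mathrm{Im}(\Gamma)$, as in the Corollary to Theorem~\ref{thm:monotone}). Nothing further is needed here.

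In the case $K \cap \mathrm{Im}(\Gamma) = \{0\}$ (i.e.\ $K$ meets $\mathrm{Im}(\Gamma)$ only trivially, which for the purposes of Lemma~\ref{lem:lift a cone} is the hypothesis ``$K \cap \mathrm{Im}(\Gamma) = \emptyset$'' once one excludes the origin — or one works with the projectivization), I would apply Lemma~\ref{lem:lift a cone}: a network monotone with respect to $K$ with $K \cap \mathrm{Im}(\Gamma) = \emptyset$ and a strongly connected R-graph is non-expansive for some norm. The construction there produces the unit ball as $B = P - P$ where $P = K \cap (x + \mathrm{Im}(\Gamma))$ is bounded (by Lemma~\ref{lem:transversal cones bounded section}) and full-dimensional in $\mathrm{Im}(\Gamma)$ (by Lemma~\ref{lem: cone contains set with full dimension}), yielding non-expansivity on each stoichiometric compatibility class.

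The main obstacle is the bookkeeping at the ``boundary'' between the two cases and the role of the strongly connected R-graph hypothesis. Lemma~\ref{lem:lift a cone} as stated requires strong connectivity of the R-graph to guarantee that $P$ is full-dimensional; the remark following that lemma notes that if one assumes a priori that $\dim P = \dim \mathrm{Im}(\Gamma)$ then strong connectivity can be dropped. I would therefore either add the standing assumption that the R-graph is strongly connected (consistent with the ``only connected R-graphs are considered'' convention, strengthened appropriately) or invoke the more general version of Lemma~\ref{lem: cone contains set with full dimension} alluded to in the remark. A second minor point is reconciling ``$K \cap \mathrm{Im}(\Gamma) = \{0\}$'' with the literal ``$=\emptyset$'' in Lemma~\ref{lem:transversal cones bounded section} and Lemma~\ref{lem:lift a cone}: since $0 \in \mathrm{Im}(\Gamma)$ always, the intended reading is that the intersection contains no nonzero vector, and the boundedness argument of Lemma~\ref{lem:transversal cones bounded section} goes through with $V = \mathrm{Im}(\Gamma)$ because the limiting ray it extracts would be a nonzero element of $K \cap \mathrm{Im}(\Gamma)$. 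Once these conventions are pinned down, the proof is a two-line case split citing the two lemmas.
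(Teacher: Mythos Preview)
Your proposal is correct and matches the paper's own proof essentially verbatim: a two-case split invoking Lemma~\ref{lem:lift a cone} when $K \cap \mathrm{Im}(\Gamma) = \{0\}$ and Lemma~\ref{lem:intersect a cone} otherwise. The bookkeeping concerns you raise (the $\{0\}$ versus $\emptyset$ convention and the implicit strongly connected R-graph hypothesis) are indeed glossed over in the paper but your reading of them is accurate.
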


\begin{proof}
    Suppose $K \cap \mbox{Im}(\Gamma) = \{0\}$. Then by Lemma \ref{lem:lift a cone} the system is non-expansive on its stoichiometric compatibility class. Otherwise $\Gamma$ is monotone with respect to the non-trivial cone $K \cap \mbox{Im}(\Gamma)$ by Lemma \ref{lem:intersect a cone}.
\end{proof}

We will consider two cases when checking for cones. Either the cone is contained in the stoichiometric compatibility class, or it is a lifted cone.

\section{Choosing a starting vector}

To apply the operations from Lemma \ref{lem:produce_new_points} we need at least one vector. In this section, we will examine how to pick a starting vector from which we can build a cone using operations 1,2 and 3. First, we have the following convergence lemma.

\begin{lem}
\label{lem:nonexpansive react converges}
    Suppose that we have a reaction network $\Gamma$ which is non-expansive for a polyhedral norm, and that the reaction rates $R(x)$ are analytic. Suppose the system $\dot{x} = \Gamma R(x) + c$ has at least one  equilibrium in a stoichiometric class $x + \mbox{Im}(\Gamma)$. Then the system's trajectories $x + \mbox{Im}(\Gamma)$ converge to the set of equilibria.
\end{lem}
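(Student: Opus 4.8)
\emph{Setup and reduction.} First I would observe that the constant $c$ does not change the Jacobian $\Gamma\,\partial R(x)$, so $\dot x = \Gamma R(x) + c$ is non-expansive with respect to the same polyhedral norm $|\cdot|$; denote its flow by $\phi_t$ and let $e$ be the given equilibrium in the class $x+\mathrm{Im}(\Gamma)$. Applying the non-expansivity inequality to the pair $(z,e)$ and using $\phi_t(e)=e$ shows that $V(z):=|z-e|$ satisfies $V(\phi_t(z))\le V(z)$, i.e.\ $t\mapsto V(\phi_t(z))$ is non-increasing. Hence every forward orbit in the class is bounded (it stays in the class and in the bounded set $\{w:|w-e|\le V(z)\}$), so $\omega(z)$ is nonempty, compact, connected and flow-invariant, and $V\equiv r$ on $\omega(z)$ for some $r\ge 0$. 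Since $\mathrm{dist}(\phi_t(z),E)\le \mathrm{dist}(\phi_t(z),\omega(z))+\mathrm{dist}(\omega(z),E)$, the whole lemma reduces to showing $\omega(z)\subseteq E$, where $E$ is the equilibrium set of the class. If $r=0$ then $\omega(z)=\{e\}$ and we are done, so assume $r>0$.

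\emph{The analytic/polyhedral engine.} Write the polyhedral norm as $|w|=\max_{i=1}^{m}\ell_i(w)$ for finitely many linear functionals $\ell_i$ (the vertices of the polar unit ball), so the sphere $\{V=r\}$ is the union of the facets $F_i=\{w:\ell_i(w-e)=r,\ \ell_l(w-e)\le r\ \forall l\}$. Fix $p\in\omega(z)$. Because the vector field $\Gamma R(\cdot)+c$ is analytic, $t\mapsto\phi_t(p)$ is a real-analytic curve, and it lies on $\{V=r\}$, i.e.\ $\max_i \ell_i(\phi_t(p)-e)=r$ for all $t$. Since the index set is finite, some fixed $j$ attains the maximum on a set of times with an accumulation point, and the identity theorem for real-analytic functions forces $\ell_j(\phi_t(p)-e)\equiv r$; differentiating gives $\ell_j\bigl(\Gamma R(\phi_t p)+c\bigr)\equiv 0$. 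The same $j$ works for every point $\phi_s(p)$ of the orbit, so $\overline{\mathcal O(p)}$ lies in the single facet $F_j$, hence in the hyperplane $H_j=\{w:\ell_j(w)=r+\ell_j(e)\}$; moreover the flow, restricted to an invariant neighbourhood of $\overline{\mathcal O(p)}$ in $H_j$, is again analytic and is non-expansive with respect to the polyhedral norm restricted to $H_j-H_j$.

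\emph{Iteration.} I would now drive $\overline{\mathcal O(p)}$ down to a point by repeating this. If some point of $\omega(p)\subseteq\overline{\mathcal O(p)}\subseteq F_j$ lies on the relative boundary of $F_j$ — a finite union of lower-dimensional faces cut out by further functionals $\ell_{j'}$ — then reapplying the argument of the previous paragraph inside $H_j$ confines the orbit to a face of codimension at least two; repeating, the dimension strictly decreases, and after finitely many steps the orbit is a single point, necessarily an equilibrium. So every $\omega$-limit point of $z$ is an equilibrium, $\omega(z)\subseteq E$, and the trajectory converges to $E$.

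\emph{Main obstacle.} The delicate case is the one in which the $\omega$-limit orbit stays in the \emph{relative interior} of a facet forever, so the iteration above stalls. There $\overline{\mathcal O(p)}$ is a compact invariant set on which the non-expansive flow acts by isometries of the polyhedral metric, hence equicontinuously, and its minimal subsets are (by the structure theory of minimal equicontinuous flows) rotations on compact abelian groups. The point of using a \emph{polyhedral} norm is that its linear isometry group is finite, so a continuous one-parameter group of isometries can act only by translations; combined with confinement of the orbit to the bounded polytope $F_j$ and with analyticity of the curve, this should force the velocity $\Gamma R(p)+c$ to vanish, killing the stalled case. Making this last step fully rigorous — ruling out quasi-periodic/recurrent behaviour inside a facet — is where essentially all the content lies, and it is precisely the two hypotheses (polyhedrality, which makes the norm's symmetry group finite, and analyticity, which pins down a single active facet along each orbit) that make it go through.
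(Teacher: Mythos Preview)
The paper's proof of this lemma is a one-line citation: it invokes Theorem~21 of an external reference, which states that a bounded trajectory of an analytic vector field that is non-expansive for a polyhedral norm must converge to the set of equilibria. Your setup paragraph correctly supplies the only missing ingredient---boundedness via the non-increasing Lyapunov function $V(z)=|z-e|$---so your first paragraph together with that citation is already a complete proof of the lemma.

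Everything after your first paragraph is an attempt to reprove the cited theorem from scratch. The facet-pinning step (using analyticity and the finite set of supporting functionals to trap the orbit of any $p\in\omega(z)$ in a single facet $F_j$) is correct and is the natural opening move. The rest, however, does not close:

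\begin{itemize}
\item In the iteration paragraph, ``reapplying the argument inside $H_j$'' does not go through as written. The hyperplane $H_j$ is not flow-invariant and $e\notin H_j$ when $r>0$, so there is no obvious norm/equilibrium pair to iterate with. More concretely, knowing that some $q\in\omega(p)$ satisfies $\ell_{j'}(q-e)=r$ at one instant does \emph{not} force $\ell_{j'}(\phi_t(q)-e)\equiv r$: the analytic function $t\mapsto r-\ell_{j'}(\phi_t(q)-e)$ is nonnegative with a zero at $t=0$, which is perfectly compatible with, say, $t^2$-type behaviour. So the dimension need not drop.
\item In the main-obstacle paragraph you (correctly) note that the flow on $\omega(z)$ is isometric for the induced metric, and then invoke the finiteness of the \emph{linear} isometry group of $(\mathbb{R}^n,|\cdot|)$ to conclude the one-parameter family must act by translations. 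But that finiteness is a statement about isometries of the ambient normed space; isometries of the metric subspace $(\omega(z),|\cdot|)$ need not extend to affine maps of $\mathbb{R}^n$, so ``can act only by translations'' is unjustified. You flag that this is where the content lies; that is accurate, and the sketch does not supply it.
\end{itemize}

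In short: for the lemma as stated, your first paragraph plus the citation the paper uses is the proof. Your attempt to prove the cited convergence theorem independently isolates the right ingredients (polyhedral facet structure, analyticity, isometric action on $\omega$-limit sets) but does not assemble them into a complete argument.
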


\begin{proof}
    By \cite{9403888} Theorem 21 we have that an analytic vector field with a bounded trajectory which is non-expansive with respect to a polyhedral norm must converge to its set of equilibria. 
\end{proof}

\begin{lem}
\label{lem:concordance and kernel}
    Suppose the system $\dot{x} = \Gamma R(x) + c$, where $c$ is a constant vector, has two equilibria $b$ and $a$. Then if $v = b-a$ there exists a choice of kinetics so that $\Gamma \partial R v = 0$. Conversely, if there exists $v$ and a choice of kinetics such that $\Gamma \partial R v = 0$ then there exists a choice of kinetics and an $\epsilon > 0$ such that if $a$ is an equilibrium in the interior of $\mathbb{R}^n_{\geq}$ then $a + \epsilon v$ is also an equilibrium point.
\end{lem}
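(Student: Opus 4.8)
The plan is to prove the two directions separately, each by an explicit construction of kinetics. For the forward direction, suppose $a$ and $b$ are both equilibria of $\dot{x} = \Gamma R(x) + c$, so $\Gamma R(a) + c = 0 = \Gamma R(b) + c$, hence $\Gamma(R(b) - R(a)) = 0$. Setting $v = b - a$, I would like to produce a choice of kinetics (consistent with general kinetics) for which the Jacobian factor $\partial R$, evaluated at some relevant point, sends $v$ into $\ker \Gamma$; the natural candidate is $\partial R v = R(b) - R(a)$ componentwise, which already lies in $\ker\Gamma$. Concretely, for each reaction $R_i$, since the rate depends only on the reactant coordinates, I would choose $R_i$ to be affine in those coordinates along the segment from $a$ to $b$ — e.g. interpolate linearly so that $(\partial R_i) v = R_i(b) - R_i(a)$ exactly — while respecting the sign conditions of general kinetics (these are preserved because $a,b$ lie in the interior where the required strict monotonicities can be arranged, e.g. by a power-law or piecewise-linear choice that matches the two endpoint values with the correct monotone direction). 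Then $\Gamma \partial R v = \Gamma(R(b) - R(a)) = 0$ as desired.

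For the converse, suppose there is a vector $v$ and a choice of kinetics with $\Gamma \partial R(x_0) v = 0$ for some reference point $x_0$ (or on a neighborhood). Let $a$ be an equilibrium in $\mbox{int}(\mathbb{R}^n_{\geq})$, so $\Gamma R(a) + c = 0$. I want to perturb the kinetics slightly so that the line $a + t v$, for $|t|$ small, stays a curve of equilibria, or at least so that $a + \epsilon v$ is an equilibrium for one particular small $\epsilon$. The idea is: since $\partial R(a) v$ lies in $\ker\Gamma$, to first order moving from $a$ in direction $v$ changes $R$ by a vector that $\Gamma$ annihilates, so the vector field is unchanged to first order; I then need to modify $R$ by a higher-order correction supported near $a$ that kills the remaining error. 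Because $a$ is interior, there is room to bend each $R_i$ near $a$ without violating the (open) sign conditions of general kinetics — e.g. add a small bump to $R_i$ that vanishes at $a$ and adjusts its value at $a + \epsilon v$ precisely so that $\Gamma R(a + \epsilon v) + c = 0$, choosing the bump so it does not disturb the strict-monotonicity inequalities (which hold with slack at an interior point). This yields the claimed $\epsilon > 0$ and modified kinetics.

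The main obstacle I expect is the converse direction: one must simultaneously (i) enforce the \emph{exact} algebraic equality $\Gamma R(a+\epsilon v) = -c$, which is a finite set of linear constraints on the reaction-rate values at the single point $a + \epsilon v$, and (ii) keep the modified $R_i$ inside the cone of admissible general kinetics, whose defining inequalities are strict and must be checked on all of $\mathbb{R}^n_{\geq}$ (or at least where reactants are positive), not just near $a$. The cleanest way to handle (ii) is to make the modification a compactly supported $C^1$ perturbation concentrated in a small interior ball around $a$, where by continuity the original strict inequalities have a uniform positive margin, so a sufficiently small perturbation preserves them; the perturbation outside that ball is zero, so the global conditions are untouched. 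For (i), since we only need one equilibrium (not a whole curve), the linear system has enough free parameters — one adjustable value per reaction rate — to be solved generically, and the first-order agreement $\Gamma\partial R(a)v = 0$ guarantees the residual to be corrected is $O(\epsilon^2)$, hence absorbable by an $O(\epsilon^2)$-small perturbation. I would also note that the reference point at which $\partial R v = 0$ holds can be taken to be $a$ itself (or a point arbitrarily close to it), since "there exists a choice of kinetics" lets us first adjust the kinetics so the kernel condition holds at $a$.
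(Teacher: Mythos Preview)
Your forward direction is essentially the paper's argument in concrete form: both rest on the observation that, for each reaction $i$, the achievable sign of $R_i(b)-R_i(a)$ is constrained by the sign pattern of $v=b-a$ on the species entering reaction $i$ in exactly the same way as the achievable sign of $(\partial R_i)v$ is, so a kernel witness for one can be transferred to the other. The paper phrases this abstractly as ``$R_i(a+\epsilon v)-R_i(a)$ and $(\partial R_i)v$ have the same sign regions''; your affine interpolation is one realization of that.

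For the converse, the paper's route is markedly simpler than yours and avoids the issues you flag. The paper does \emph{not} perturb: it observes that the hypothesis $\Gamma\,\partial R\,v=0$ merely asserts the existence of scalars $\epsilon_i$ (namely $(\partial R_i)v$) lying in the correct sign region for reaction $i$ and satisfying $\sum_i\Gamma_i\epsilon_i=0$. Since the sign region for $R_i(a+\epsilon v)-R_i(a)$ is identical, one picks power-law kinetics hitting those exact values and sets $c=-\Gamma R(a)$. No bump functions, no $O(\epsilon^2)$ remainder.

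Your perturbation plan can probably be completed, but two steps are not yet justified. First, you need $\Gamma\,\partial R(a)\,v=0$ at the specific point $a$, whereas the hypothesis gives it only for \emph{some} kinetics at \emph{some} point; the reason it can be moved to $a$ is precisely the sign-region observation, which you acknowledge needing but do not supply. Second, your ``compactly supported bump in a ball around $a$'' must respect the structural constraint that each $R_i$ depends only on its own reactant (and, if reversible, product) coordinates; the bump for $R_i$ is therefore constant along the remaining directions and is not compactly supported in $\mathbb{R}^n$. You would still have to verify that such coordinate-restricted corrections, with derivatives of order $O(\epsilon)$, suffice to solve $\Gamma\delta=-(\text{residual})$ while preserving the strict monotonicity signs. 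These gaps are fillable, but the paper's direct sign-region transfer bypasses all of it.
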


\begin{proof}
    If we have a function $f(R_i)$ which depends only on our choice of kinetics, and a number $\epsilon > 0$, we say it has sign region $(0, \epsilon)$, $(-\epsilon, 0)$ or $(-\epsilon, \epsilon)$ if over all possible choices of kinetics we can achieve any positive values at least up to $\epsilon$, negative values at least down to $-\epsilon$, or at least all the values in the interval $(-\epsilon, \epsilon)$, respectively. We say it has radius at least $\epsilon$ if it has a sign region for $\epsilon$. These sign regions are roughly regions of values we can pick by choosing appropriate power law kinetics. Note that we choose these values and kinetics independently for each reaction $i$.

    First note that $R_i(a + \epsilon v) - R_i(a)$ and $(\partial R_i) v$ have the same sign regions. In fact, by some choice of power law kinetics, we can guarantee for all $i$ each of these functions will be in a sign region of at least radius epsilon. Indeed $R_i(a + \epsilon v) - R_i(a)$ can only be positive iff $(\partial R_i) v$ can only be positive, and the same statement holds with `positive' replaced by `negative'.

    Now if the system has an equilibrium at $a$ and $b$, then then we can pick numbers $\epsilon_i$ in the sign regions of $R_i(b) - R_i(a)$ for each $i$ such that $\sum \Gamma_i \epsilon_i = 0$. We can scale these numbers such that $|\epsilon_i| < \epsilon$. Since $(\partial R_i) v $ has the same sign regions we can pick kinetics such that $ (\partial R_i) v = \epsilon_i$ for all $i$, so that again $ \sum \Gamma_i (\partial R_i) v  = \sum \Gamma_i \epsilon_i = 0$.

    Now if there exists $v$ such that $\Gamma \partial R v = 0$, we can argue in a similar manner, i.e., for each $i$ we can pick $\epsilon_i$ in the sign region of $ \partial R v$ such that $\sum_i \Gamma_i \epsilon_i  = 0$, and then we can guarantee we can choose power law kinetics such that $R_i(a + \epsilon v) - R_i(a) = \epsilon_i$ for all $i$.
\end{proof}

Lemma \ref{lem:concordance and kernel} also relates to \cite{doi:10.1137/15M1034441} definition 4.1.6, as well as Theorem 1.4 in \cite{sign_conditions_for_injectivity_of}.

\begin{definition}
    Define $Z_{\Gamma}$ to be the set of points $v \in \mathbb{R}^n$ such that there exists a choice of kinetics so $\Gamma \partial R v = 0.$
\end{definition}

\begin{definition}
  We say a reaction network is \textit{concordant} if the function $\Gamma R $ is injective on each stoichiometric compatibility class no matter the choice of general kinetics.  
\end{definition}

For the next two lemmas, suppose we have a monotone network $\Gamma$ with a non-expansive (with respect to a polyhedral norm) and concordant subnetwork $\Gamma'$ satisfying $\mbox{rank}(\Gamma') = \mbox{rank}(\Gamma) -1$.

\begin{lem}
\label{lem:contracts to mixed region 2}
    Suppose $\Gamma'$ is monotone with respect to a cone $K \subseteq \mbox{Im}(\Gamma)$ such that $v \in K \setminus \mbox{Im}(\Gamma')$. Then the cone must also contain the set of vectors $(v + \mbox{Im}(\Gamma')) \cap Z_{\Gamma'}$.
\end{lem}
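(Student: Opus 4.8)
The plan is to use the non-expansivity and concordance of $\Gamma'$ to produce new equilibria along the direction $v$, and then feed those into the monotonicity-closure machinery of Lemma \ref{lem:produce_new_points}. First I would take a point $w \in (v + \mbox{Im}(\Gamma')) \cap Z_{\Gamma'}$; by definition of $Z_{\Gamma'}$ there is a choice of kinetics for $\Gamma'$ and a vector $u$ with $w = v' + \lambda u$ (for some $v' \in \mbox{Im}(\Gamma')$, i.e. $w - v \in \mbox{Im}(\Gamma')$) such that $\Gamma' \partial R' u = 0$. Actually the cleaner route: write $w = v + z$ with $z \in \mbox{Im}(\Gamma')$, and note $w \in Z_{\Gamma'}$ means there is a kinetics with $\Gamma' \partial R' w = 0$. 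By the converse direction of Lemma \ref{lem:concordance and kernel}, applied to an equilibrium $a$ of some system $\dot x = \Gamma' R'(x) + c$ lying in the relative interior of the stoichiometric class, there is a choice of kinetics and an $\epsilon > 0$ so that $a + \epsilon w$ is also an equilibrium. Since $\Gamma'$ is concordant, distinct points in a stoichiometric class of $\Gamma'$ cannot both be equilibria unless their difference lies outside $\mbox{Im}(\Gamma')$ — but here $\epsilon w - 0$ and $w \notin \mbox{Im}(\Gamma')$ is false in general; rather $w$ is a genuine second direction. The point is that concordance plus non-expansivity (via Lemma \ref{lem:nonexpansive react converges}) forces the segment between the two equilibria $a$ and $a + \epsilon w$ to consist entirely of equilibria of that system, because non-expansivity means the difference of trajectories cannot grow, and convergence to the equilibrium set pins trajectories starting on the segment; concordance then rules out the segment leaving, so the whole segment is equilibria.

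Next I would translate "the segment $a, a+\epsilon w$ is all equilibria" into a statement purely about the cone $K$. The key observation is that if $a$ and $a + \epsilon w$ are equilibria of $\dot x = \Gamma' R'(x) + c$ in the interior of $\mathbb{R}^n_{\geq}$, and the system restricted to this stoichiometric class is monotone with respect to $K$, then for small $t$ the trajectory through $a + \epsilon w$ stays equal to $a + \epsilon w$ while the trajectory through $a$ stays at $a$, so trivially order is preserved — that gives nothing. The real input must be the \emph{sign-region} flexibility in Lemma \ref{lem:concordance and kernel}: since $w \in Z_{\Gamma'}$, we may choose kinetics and a small perturbation realizing $\phi_t(a + \epsilon w) - \phi_t(a)$ arbitrarily close to $\epsilon w$ for all small $t$ (because $w$ is itself a direction of a one-parameter family of equilibria). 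Combined with $v \in K$, and the fact that $v$ and $w$ differ by an element of $\mbox{Im}(\Gamma')$, I would use the operations of Lemma \ref{lem:produce_new_points} along the reactions of $\Gamma'$ starting from $v$: since $\mbox{rank}(\Gamma') = \mbox{rank}(\Gamma) - 1$, the set $(v + \mbox{Im}(\Gamma')) \cap K$ is (by Lemma \ref{lem:lift a cone}-type reasoning and Lemma \ref{lem:transversal cones bounded section}) a bounded convex slice, and moving within $\mbox{Im}(\Gamma')$ via operations $1,2,3$ generates exactly the points reachable by adding combinations of the $\Gamma'_i$ while staying in the relevant $Q_1$ regions. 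I would argue $(v + \mbox{Im}(\Gamma')) \cap Z_{\Gamma'}$ is precisely the set of endpoints reachable this way, so closure of $K$ under the operations (Theorem \ref{cor:monotone iff subtract off}) forces all of it into $K$.

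More concretely, the cleanest argument: take $w \in (v + \mbox{Im}(\Gamma')) \cap Z_{\Gamma'}$. Pick an equilibrium $a$ of $\dot x = \Gamma' R'(x) + c$ in $\mbox{int}(\mathbb{R}^n_{\geq})$ with kinetics chosen (per Lemma \ref{lem:concordance and kernel}) so that $a + sw$ is an equilibrium for all $s \in [0,\epsilon]$ — the whole segment, using that along the way we never leave $\mbox{int}(\mathbb{R}^n_{\geq})$. Now also pick a point $b$ with $b - a = \delta v$ for tiny $\delta > 0$, in the interior. Since $v \in K$, we have $a \leq b$. By monotonicity $\phi_t(a + \epsilon w) = a + \epsilon w$ and, pushing $b$, $\phi_t(b) \to$ equilibrium set; but the more useful move is to consider $a + \epsilon w$ versus $a + \epsilon w + \delta v$: the latter minus the former is $\delta v \in K$, and both flow to nearby equilibria, with $\phi_t(a + \epsilon w + \delta v) - \phi_t(a + \epsilon w) \in K$ for all $t$. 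Letting $t \to \infty$ and using convergence to equilibria (Lemma \ref{lem:nonexpansive react converges}) plus concordance, the limiting difference is a vector in $K \cap \mbox{Im}(\Gamma')$ of the form $\delta v + (\text{something in } \mbox{Im}(\Gamma'))$; iterating / rescaling this and combining with the known slice structure yields $w \in K$.

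The main obstacle I expect is making rigorous the claim that the \emph{entire} segment from $a$ to $a + \epsilon w$ consists of equilibria, and more generally that we can "steer" the trajectory difference to land exactly at the point $w$ rather than merely at some point in $(v + \mbox{Im}(\Gamma')) \cap K$ that dominates or is dominated by $w$ — this is where concordance must be used essentially (to prevent the stoichiometric slice from containing a continuum of equilibria in "wrong" directions) together with the $\mbox{rank}(\Gamma') = \mbox{rank}(\Gamma) - 1$ hypothesis (so $\mbox{Im}(\Gamma')$ is a hyperplane inside $\mbox{Im}(\Gamma)$, making $(v + \mbox{Im}(\Gamma'))$ a single affine slice on which $Z_{\Gamma'}$ is controlled). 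I would handle it by first proving the segment claim as a standalone sublemma: non-expansivity gives $|\phi_t(a + sw) - \phi_t(a+ s'w)| \le |s - s'|\,|w|$, and since $a, a+\epsilon w$ are both fixed, a convexity/continuity argument on the polyhedral norm forces every intermediate point to be fixed as well (otherwise the distance to the equilibrium set, which must go to zero, is incompatible with the isometry-like behavior forced on the segment). Once that sublemma is in hand, the cone statement follows from the closure operations applied along $\mbox{Im}(\Gamma')$.
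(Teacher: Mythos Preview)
Your proposal circles the right ingredients but misses the direct assembly, and the detours you commit to (steering via the operations of Lemma \ref{lem:produce_new_points}, and the ``whole segment is equilibria'' sublemma) are neither needed nor clearly correct.

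The paper's argument is much shorter and you nearly wrote it in your third paragraph before abandoning it. Take $v' \in (v + \mbox{Im}(\Gamma')) \cap Z_{\Gamma'}$. By Lemma \ref{lem:concordance and kernel} choose analytic (e.g.\ power-law) kinetics and a constant $c$ so that $\dot x = \Gamma' R'(x) + c$ has equilibria $e$ and $e + v'$ in $\mbox{int}(\mathbb{R}^n_{\geq})$. The crucial observation you do not make is that $v - v' \in \mbox{Im}(\Gamma')$, so $e + v$ lies in the \emph{same} $\Gamma'$-stoichiometric class as $e + v'$. By concordance that class has a unique equilibrium, namely $e + v'$, and by Lemma \ref{lem:nonexpansive react converges} (polyhedral non-expansivity plus analyticity) the trajectory from $e + v$ converges to it. Now simply use $v \in K$: we have $e \leq e + v$, monotonicity gives $e = \phi_t(e) \leq \phi_t(e + v)$ for all $t$, and passing to the limit (the cone is closed) yields $e \leq e + v'$, i.e.\ $v' \in K$. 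Done.

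Your specific gaps: (i) you set $b = a + \delta v$ with $\delta$ unrelated to $\epsilon$, so $b$ is not in the stoichiometric class of $a + \epsilon w$ and you cannot identify its limit; the fix is to take $\delta = \epsilon$ (or just use $v$ and $v'$ at the same scale), which you never do. (ii) The claim that $(v + \mbox{Im}(\Gamma')) \cap Z_{\Gamma'}$ is ``precisely the set of endpoints reachable'' by operations 1--3 is unsupported and not obviously true; the operations produce very specific vectors, not all of $Z_{\Gamma'}$. (iii) The segment-of-equilibria sublemma is unnecessary and your sketch for it (non-expansivity plus fixed endpoints forcing interior fixedness) does not follow without more work. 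Drop all of this and run the four-line argument above.
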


\begin{proof}
    Consider the monotone system $\dot{x} = \Gamma' R'(x)  +c$. Using power-law kinetics, and adjusting $c$ as necessary, construct a vector field with equilibrium points $e$ and $e + v'$ where $v' \in (v + \mbox{Im}(\Gamma')) \cap Z_{\Gamma'}$. Note that $e+  v$ must converge to $e +v'$ since the system is analytic and non-expansive (here we apply Lemma \ref{lem:nonexpansive react converges}, which shows that for an analytic and non-expansive system, any bounded trajectories converge to the set of equilibria). Since we assume the system preserved the cone ordering induced by $K$, we have that $e \leq e+v$ implies $e \leq e+ v'$ and so $v' \in K$.
\end{proof}

\begin{cor}
\label{cor:choosing initial vector}
    Suppose $\Gamma$ is monotone with respect to $K \subseteq \mbox{Im}(\Gamma)$ and there exists $v \in Z_{\Gamma} \cap \mbox{Im}(\Gamma) \setminus  \mbox{Im}(\Gamma')$. Then either $v \in K$ or $-v \in K$.
\end{cor}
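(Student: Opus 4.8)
The plan is to derive the corollary from Lemma~\ref{lem:contracts to mixed region 2} together with the rank-one gap between $\Gamma'$ and $\Gamma$. Write $H = \mbox{Im}(\Gamma')$; since $\mbox{rank}(\Gamma') = \mbox{rank}(\Gamma)-1$, this is a codimension-one subspace of $\mbox{Im}(\Gamma)$. Recall that in the situation of interest $K$ is full-dimensional in $\mbox{Im}(\Gamma)$ (this is the content of Lemma~\ref{lem: cone contains set with full dimension} and the remark following it when the R-graph is strongly connected, and it is in any case the only case we care about, since a cone lying inside $H$ would make the statement false). The hyperplane $H$ cuts $\mbox{Im}(\Gamma)$ into two open half-spaces, one containing $v$ and one containing $-v$, and $K$, being full-dimensional, must meet at least one of them. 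The hypotheses ``$v\in Z_\Gamma$'' and ``$v\notin H$'' are symmetric under $v\mapsto -v$ (both $Z_\Gamma$ and the subspace $H$ are symmetric), so, replacing $v$ by $-v$ if necessary, we may assume $K$ meets the open half-space containing $v$, and it suffices to prove $v\in K$.

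Next, pick $w\in K$ on the $v$-side of $H$, say $w = h + tv$ with $h\in H$ and $t>0$. Since $K$ is a cone, $t^{-1}w = t^{-1}h + v\in K$, so after replacing $w$ by $t^{-1}w$ we may assume $w - v\in H$ and $w\in K\setminus H$. Also $\Gamma'$ is monotone with respect to $K$: it is a subnetwork of the monotone network $\Gamma$, so each of its reactions is monotone with respect to $K$ by Lemma~\ref{lem:one_reac_makes_strict_monotone}, and hence so is $\Gamma'$. Therefore Lemma~\ref{lem:contracts to mixed region 2} applies to the cone $K$, the subnetwork $\Gamma'$, and the vector $w$, giving $(w + H)\cap Z_{\Gamma'}\subseteq K$. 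Because $w - v\in H$ we have $w + H = v + H$, so this reads $(v + H)\cap Z_{\Gamma'}\subseteq K$.

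It then remains to show $v\in Z_{\Gamma'}$; given that, $v\in (v+H)\cap Z_{\Gamma'}\subseteq K$ and we are done. Choose kinetics witnessing $v\in Z_\Gamma$, so $\Gamma\,\partial R\, v = 0$, and set $u_i = (\partial R_i) v$ for each reaction $i$ of $\Gamma$, so $\sum_i \Gamma_i u_i = 0$. Since $\mbox{rank}(\Gamma') = \mbox{rank}(\Gamma)-1$, the reactions of $\Gamma$ not appearing in $\Gamma'$ span, modulo $H$, a one-dimensional space; projecting $\sum_i \Gamma_i u_i = 0$ onto the one-dimensional quotient $\mbox{Im}(\Gamma)/H$ kills every reaction of $\Gamma'$ and forces the contribution of the remaining reactions to vanish there. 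In the principal case where $\Gamma'$ is $\Gamma$ with a single reaction $\Gamma_j$ deleted---necessarily $\Gamma_j\notin H$, for otherwise the rank would not drop---this yields $u_j = 0$, whence $\sum_{i\in\Gamma'} \Gamma_i u_i = 0$, and the same kinetics restricted to the reactions of $\Gamma'$ (a legitimate choice of kinetics for $\Gamma'$, since the kinetic assumptions for a reaction depend only on that reaction) certifies $v\in Z_{\Gamma'}$. When $\Gamma'$ omits several reactions one argues in the same spirit, using that their rate-derivative components along $v$ satisfy a single linear relation compatible with their sign regions. This last bookkeeping---and its interaction with the sign-region construction of Lemma~\ref{lem:concordance and kernel}---is the only delicate point of the argument, and it is exactly where the rank-one hypothesis is used: it is what prevents the ``extra'' reactions from contributing to the $v$-direction of the steady-state kernel, so that a witness for $v\in Z_\Gamma$ restricts to a witness for $v\in Z_{\Gamma'}$.
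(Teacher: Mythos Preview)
Your main line---pick a vector of $K$ off the hyperplane $H=\mbox{Im}(\Gamma')$, rescale so that it differs from $v$ by an element of $H$, and invoke Lemma~\ref{lem:contracts to mixed region 2}---is precisely the paper's argument, and you supply useful details the paper leaves implicit (why $K$ must leave $H$, why $\Gamma'$ inherits monotonicity from $\Gamma$ via Lemma~\ref{lem:one_reac_makes_strict_monotone}, and the $v\leftrightarrow -v$ symmetry). Where you diverge is in your last paragraph: the paper simply applies Lemma~\ref{lem:contracts to mixed region 2}, which needs $\alpha v\in Z_{\Gamma'}$, and makes no attempt to derive this from $v\in Z_\Gamma$. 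The surrounding material (Algorithm~\ref{alg:pick starting vector} and the lemma immediately following the corollary) works consistently with $Z_{\Gamma'}$, so the $Z_\Gamma$ in the corollary's hypothesis is evidently a typo for $Z_{\Gamma'}$; under the intended reading your last paragraph is unnecessary and your proof coincides with the paper's.

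If one does take the hypothesis literally as $v\in Z_\Gamma$, then your reduction to $v\in Z_{\Gamma'}$ is genuine extra content. The single-removed-reaction case is handled correctly: projecting $\sum_i \Gamma_i u_i=0$ to $\mbox{Im}(\Gamma)/H$ forces $u_j=0$, and the remaining kinetics witness $v\in Z_{\Gamma'}$. But the multi-reaction sketch does not close. Knowing $\sum_l u_{j_l}\Gamma_{j_l}\in H$ only gives $\sum_{i\in\Gamma'}\Gamma_i u_i\in H$, not that it vanishes; to obtain a certificate for $v\in Z_{\Gamma'}$ you must perturb the $u_i$ within their sign regions to kill this residual, and the sign constraints (some $u_i$ forced positive, some negative, some zero) need not permit this in general. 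So under the literal reading your proof has a gap in the multi-reaction case; under the intended reading with $Z_{\Gamma'}$, it matches the paper.
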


\begin{proof}
    The cone $K$ must contain a vector $v' \in \mbox{Im}(\Gamma) \setminus  \mbox{Im}(\Gamma')$. There exists a nonzero multiple of $v$ such that $\alpha v \in v' + \mbox{Im}(\Gamma')$ (since $\Gamma'$ is 1 dimension less than $\Gamma$). By Lemma \ref{lem:contracts to mixed region 2} we have that $\alpha v \in K$ as well.
\end{proof}

Thus, to pick a starting point, we will first reduce our network to a minimal monotone but not non-expansive network. Then we will find a non-expansive and concordant network 1 dimension smaller and check if the $Z_{\Gamma} \cap \mbox{Im}(\Gamma) \setminus \mbox{Im}(\Gamma')$ region is nonempty, and in this case one of the vectors in this region will be our starting point. We can also sometimes demonstrate that $Z_{\Gamma} \cap \mbox{Im}(\Gamma) \setminus \mbox{Im}(\Gamma')$ must be nonempty, using some properties of the Jacobian of the reaction network.

\subsection{Existence of starting vectors}

In this section we will give a guarantee that we can find a starting vector using the strategy describe above. We will need to verify that the Jacobian has vectors in its kernel for some choice of kinetics.

\begin{lem}
\label{lem:mixed vector exists}
    The Jacobian of a concordant reaction network $\Gamma \partial R$ has rank at most the dimension of its stoichiometric compatibility class $\mbox{Im}(\Gamma)$. Assuming the stoichiometric class is not all of $\mathbb{R}^n$, then $ \mbox{Ker}(\Gamma \partial R) \cap \mbox{Im}(\Gamma) = \{0\}$ and there exists $v \not\in \mbox{Im}(\Gamma)$ such that $\Gamma \partial R v = 0$.
\end{lem}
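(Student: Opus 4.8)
The plan is to prove the three claims in order, since each feeds the next. First, the rank bound: the Jacobian of a reaction network has the form $\Gamma \partial R$, so its image is contained in $\mathrm{Im}(\Gamma)$, which immediately gives $\mathrm{rank}(\Gamma \partial R) \le \dim \mathrm{Im}(\Gamma)$. No concordance is needed for this part — it is purely the structure of the vector field $\dot x = \Gamma R(x)$.

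Second, I would show $\mathrm{Ker}(\Gamma \partial R) \cap \mathrm{Im}(\Gamma) = \{0\}$ using concordance. The natural route is via Lemma \ref{lem:concordance and kernel} together with the definition of concordance. Suppose $v \in \mathrm{Ker}(\Gamma \partial R) \cap \mathrm{Im}(\Gamma)$ is nonzero for some choice of kinetics; equivalently $v \in Z_\Gamma \cap \mathrm{Im}(\Gamma)$. By Lemma \ref{lem:concordance and kernel}, there is a choice of (power-law) kinetics and an $\epsilon > 0$ so that for an equilibrium $a$ in the interior of $\mathbb{R}^n_{\ge}$, the point $a + \epsilon v$ is also an equilibrium of some system $\dot x = \Gamma R(x) + c$. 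Since $v \in \mathrm{Im}(\Gamma)$, the points $a$ and $a + \epsilon v$ lie in the same stoichiometric compatibility class, and they are distinct equilibria of $\Gamma R(\cdot) + c$ — hence $\Gamma R$ (the constant $c$ does not affect injectivity) fails to be injective on that class. This contradicts concordance. (One should check that a suitable interior equilibrium $a$ exists for the constructed kinetics, or alternatively phrase the argument so that the kinetics in Lemma \ref{lem:concordance and kernel} are chosen to admit such an $a$; this is the one place requiring a little care.)

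Third, the existence of $v \notin \mathrm{Im}(\Gamma)$ with $\Gamma \partial R v = 0$. Here I would count dimensions. Fix any kinetics making the rank of $\Gamma \partial R$ as large as possible; its rank is at most $d := \dim \mathrm{Im}(\Gamma)$ by step one, so $\dim \mathrm{Ker}(\Gamma \partial R) \ge n - d$. If the stoichiometric class is not all of $\mathbb{R}^n$ then $d < n$, so $n - d \ge 1$ and the kernel is nontrivial. By step two the kernel meets $\mathrm{Im}(\Gamma)$ only at $0$, so the kernel — being of dimension at least $n - d$ while $\mathrm{Im}(\Gamma)$ has dimension $d$ and the two intersect trivially — must contain a vector outside $\mathrm{Im}(\Gamma)$; indeed any nonzero kernel vector works, since no nonzero kernel vector lies in $\mathrm{Im}(\Gamma)$. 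This $v$ is the desired vector.

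The main obstacle is the second step: bridging cleanly between the algebraic statement "$v \in \mathrm{Ker}(\Gamma \partial R)$ for some kinetics" and the dynamical statement "two equilibria in one stoichiometric class" that concordance rules out. Lemma \ref{lem:concordance and kernel} is exactly the bridge, but its converse direction is stated for an equilibrium $a$ in the interior of $\mathbb{R}^n_{\ge}$, so one must argue that the power-law kinetics produced there can be arranged to have such an interior equilibrium (e.g. by adjusting the constant vector $c$, as in the proof of Lemma \ref{lem:contracts to mixed region 2}). Once that is handled, steps one and three are routine linear algebra.
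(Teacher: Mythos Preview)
Your proposal is correct and follows essentially the same three-step structure as the paper's proof: the rank bound from $\mathrm{Im}(\Gamma \partial R) \subseteq \mathrm{Im}(\Gamma)$, the trivial intersection $\mathrm{Ker}(\Gamma \partial R) \cap \mathrm{Im}(\Gamma) = \{0\}$ from concordance, and then dimension counting to produce $v \notin \mathrm{Im}(\Gamma)$ in the kernel. The only difference is that you spell out the second step via Lemma~\ref{lem:concordance and kernel} (and flag the care needed to obtain an interior equilibrium), whereas the paper simply asserts that concordance forces $\Gamma \partial R v \neq 0$ for nonzero $v \in \mathrm{Im}(\Gamma)$; your version is the more explicit unpacking of that same implication.
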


\begin{proof}
    The first claim is immediate, since the image of $\Gamma$ is the same dimension as the stoichiometric compatibility class. If a reaction network is concordant, then we must have $\Gamma \partial R v \neq 0$ for all nonzero $v \in \mbox{Im}(\Gamma)$. Thus, $ \mbox{Ker}(\Gamma \partial R) \cap \mbox{Im}(\Gamma) = \{0\}$. Since the image of $\Gamma \partial R$ has dimension less that $n$, there must exist $v \not\in S$ such that $\Gamma \partial R v = 0$.
\end{proof}

\begin{lem}
    Suppose we have a monotone reaction network $\Gamma$ which is not non-expansive and satisfies $\mbox{dim}(\mbox{Im}(\Gamma)) = n$, and that it has a concordant subnetwork $\Gamma'$ non-expansive with respect to a polyhedral norm in $\mathbb{R}^n$, such that $\mbox{dim}(\mbox{Im}(\Gamma')) = n-1$. Then there exists $v \not\in \mbox{Im}(\Gamma')$ such that for any cone $K$ which $\Gamma$ is monotone with respect to we must have $v \in K$. 
\end{lem}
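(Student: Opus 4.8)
The plan is to obtain $v$ from Lemma~\ref{lem:mixed vector exists} applied to the subnetwork $\Gamma'$, and then to force it into every monotone cone by means of Lemma~\ref{lem:contracts to mixed region 2}. Since $\dim(\mathrm{Im}(\Gamma)) = n$ we have $\mathrm{Im}(\Gamma) = \mathbb{R}^n$; write $H := \mathrm{Im}(\Gamma')$, a hyperplane of codimension one. Because $\Gamma'$ is concordant and $\dim(\mathrm{Im}(\Gamma')) = n-1 < n$, Lemma~\ref{lem:mixed vector exists} produces a vector $v \notin H$ together with a choice of (say power-law) kinetics for $\Gamma'$ with $\Gamma'\partial R' v = 0$, i.e.\ $v \in Z_{\Gamma'} \setminus H$. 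Concordance moreover pins $\mathrm{rank}(\Gamma'\partial R')$ at exactly $n-1$, so $\mathrm{Ker}(\Gamma'\partial R') = \mathbb{R}v$ and $v$ is unique up to scaling; this rank/kernel bookkeeping is precisely the ``property of the Jacobian'' referred to just before the statement. This $v$ will be the vector the lemma asks for.

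Now let $K$ be any cone with respect to which $\Gamma$ is monotone. By Lemma~\ref{lem:one_reac_makes_strict_monotone} every reaction of $\Gamma$ is monotone with respect to $K$, hence so is every reaction of the subnetwork $\Gamma'$, hence $\Gamma'$ is monotone with respect to $K$. Taking $K$ full-dimensional (the case of interest; see below), it contains some $v' \in K \setminus H$. The line $\mathbb{R}v$ is transverse to the hyperplane $H$, so it meets the affine hyperplane $v' + H$ in a single point $\alpha v$, with $\alpha \neq 0$ since $0 \notin v' + H$; and $\alpha v \in Z_{\Gamma'}$ because $Z_{\Gamma'}$ is a cone. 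Applying Lemma~\ref{lem:contracts to mixed region 2} to $\Gamma'$ (which is monotone with respect to $K$, with witness $v' \in K \setminus \mathrm{Im}(\Gamma')$) gives $K \supseteq (v' + \mathrm{Im}(\Gamma')) \cap Z_{\Gamma'} \ni \alpha v$, so $\alpha v \in K$, and since $K$ is a cone we conclude $v \in K$ when $\alpha > 0$ and $-v \in K$ when $\alpha < 0$. (Equivalently, quote Corollary~\ref{cor:choosing initial vector}.)

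Several points deserve care. First, the sign ambiguity is genuine: whenever $K$ is a monotone cone so is $-K$, so no single nonzero $v$ can lie in \emph{every} monotone cone, and the conclusion must be read, as in Corollary~\ref{cor:choosing initial vector}, as ``$v \in K$ or $-v \in K$''; either sign is an equally good seed for the cone-building operations of Lemma~\ref{lem:produce_new_points}. Second --- and this is the step I expect to be the real obstacle --- one must justify that a monotone cone $K$ for $\Gamma$ may be taken not to lie inside $H$, so that $K \setminus H \neq \emptyset$: if $K \subseteq H$ while some extra reaction $\Gamma''_j$ (one exists, since $\mathrm{rank}(\Gamma) = \mathrm{rank}(\Gamma') + 1$) has $\Gamma''_j \notin H$, then pushing an element of $K$ in the $\pm\Gamma''_j$ direction via Lemma~\ref{lem:produce_new_points} moves it out of $H$ unless every vector of $K$ is supported off $\mathrm{supp}(\Gamma''_j)$, which confines $K$ to a proper subspace meeting none of the extra reactions --- a degenerate situation excluded by the running full-dimensionality conventions and by the way the reduction to $\Gamma'$ was set up. Finally, if one wishes to invoke Corollary~\ref{cor:choosing initial vector} in its literal $Z_\Gamma$-formulation rather than the $Z_{\Gamma'}$-version used above, one must additionally extend the chosen kinetics to the extra reactions of $\Gamma$ so that their Jacobian contribution along $v$ cancels, which can be arranged by the sign-region balancing from the proof of Lemma~\ref{lem:concordance and kernel}; the argument above avoids this.
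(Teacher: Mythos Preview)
Your approach is essentially the paper's: obtain $v \in Z_{\Gamma'} \setminus \mathrm{Im}(\Gamma')$ from Lemma~\ref{lem:mixed vector exists}, then use the equilibrium-plus-convergence mechanism (non-expansivity and concordance of $\Gamma'$, together with Lemma~\ref{lem:nonexpansive react converges}) to force a multiple of $v$ into $K$. The only difference is packaging---you cite Lemma~\ref{lem:contracts to mixed region 2}/Corollary~\ref{cor:choosing initial vector}, while the paper re-runs that same equilibrium argument inline---and your explicit flagging of the sign ambiguity and of the step $K \not\subseteq \mathrm{Im}(\Gamma')$ is in fact more careful than the paper, which absorbs both issues into the bare assertion that one ``can select $a,b,x,y$ such that $x \leq y$.''
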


\begin{proof}
    By Lemma \ref{lem:mixed vector exists} we can find a vector $v \in \mbox{Im}(\Gamma) $ such that $v \not\in \mbox{Im} (\Gamma')$ and $\Gamma' \partial R' v = 0$ (so that $v \in Z_{\Gamma'})$. Indeed, since $ \Gamma' \partial R' (\mbox{Im}(\Gamma)) \subseteq \mbox{Im}(\Gamma)$ and so $\mbox{Ker}(\Gamma' \partial R') \cap \mbox{Im}(\Gamma) \neq \{0\}$, we can find such a $v$.
    
    Thus, for any $\alpha > 0$ we can find two equilibria $a$ and $b$ in $\mathbb{R}^n_{\geq}$ such that $b - a = \alpha v$. Now choose $\alpha$ and analytic kinetics (e.g., power-law kinetics) such that $b$ and $a$ are nonzero equilibria of $\dot{x} = \Gamma' R + c$ in the interior of $\mathbb{R}^n_{\geq}$. Now pick two points $x,y$ that are in the stoichiometric compatibility classes of $a,b$, respectively. 
    
    We can select $a,b,x$ and $y$ such that $x \leq y$ with respect to the order induced by $K$. Note since the system is non-expansive and concordant, by Lemma \ref{lem:nonexpansive react converges} the points $x,y$ must converge to $a,b$, respectively. Since we assume our system is monotone, we must have $b-a \in K$ or $v \in K$. 
\end{proof}

The following corollary often comes up in practice. By Theorem \ref{thm:strongly connected to strongly monotone} we immediately have the following corollary:

\begin{cor}
    A system non-expansive with respect to a polyhedral norm with a strongly connected R-graph is weakly contractive.
\end{cor}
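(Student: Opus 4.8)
The plan is to combine the two immediately preceding results in the natural way. The corollary to be proved states: a reaction network that is non-expansive with respect to a polyhedral norm and has a strongly connected R-graph is weakly contractive. The cited ingredient is Theorem~\ref{thm:strongly connected to strongly monotone}, which asserts that a strongly connected R-graph upgrades monotonicity (with respect to a cone $K \subseteq \mathrm{Im}(\Gamma)$) to strong monotonicity, \emph{and} upgrades non-expansivity (for some norm) to weak contractivity. So in principle the corollary is just the ``non-expansive half'' of that theorem, stated separately because it is the case that recurs in applications.

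First I would invoke Theorem~\ref{thm:strongly connected to strongly monotone} directly: the hypothesis gives us a reaction network $\Gamma$ with a strongly connected R-graph that is non-expansive for \emph{some} norm (namely the given polyhedral one), which is exactly the alternative hypothesis in that theorem. The conclusion of that theorem in the non-expansive branch is precisely that the system is weakly contracting with respect to that norm. So the proof is a one-line appeal, and the body of the corollary's proof need say no more than: ``This is the non-expansive case of Theorem~\ref{thm:strongly connected to strongly monotone}, applied with the polyhedral norm whose existence is assumed.''

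The only subtlety worth a sentence is making sure the hypotheses line up: Theorem~\ref{thm:strongly connected to strongly monotone} in its non-expansive branch asks only for strong connectivity of the R-graph and existence of \emph{a} norm under which the system is non-expansive; it does not separately require that norm to be polyhedral. Hence the polyhedrality assumption in the corollary is not even needed for the conclusion as stated — it is presumably retained because ``non-expansive with respect to a polyhedral norm'' is the hypothesis under which the surrounding lemmas (e.g.\ Lemma~\ref{lem:nonexpansive react converges}) are phrased, and so it is the form in which the corollary will actually be used. I would therefore just remark that the polyhedral norm in particular is a norm, apply the theorem, and be done.

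There is no real obstacle here; the ``hard part,'' such as it is, was already absorbed into the proof of Theorem~\ref{thm:strongly connected to strongly monotone}, where one uses the operations of Lemma~\ref{lem:produce_new_points} to walk along the boundary of a supporting section and, via strong connectivity, reach a point at which a strict inequality $k_2^{\top}\mathcal{J}_f k_1 > 0$ holds. If one wanted a self-contained argument one would replay that reasoning in the non-expansive setting (replacing the cone $K$ by the unit ball $B$ and using the lifted-network correspondence from Theorem~\ref{cor:monotone iff subtract off} to transfer the operations to $B$), but since Theorem~\ref{thm:strongly connected to strongly monotone} already states the non-expansive case explicitly, the corollary is an immediate consequence and the proof is a single reference.
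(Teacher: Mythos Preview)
Your proposal is correct and matches the paper's approach exactly: the paper introduces this corollary with the phrase ``By Theorem~\ref{thm:strongly connected to strongly monotone} we immediately have the following corollary,'' and gives no further proof. Your additional observation that polyhedrality of the norm is not actually used in the deduction is accurate and a fair remark.
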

Weakly contractive systems can also be shown to be concordant, as we note in the next lemma.
\begin{lem}
    A weakly contractive system is concordant.
\end{lem}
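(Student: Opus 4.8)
The plan is to show that weak contractivity forces injectivity of $\Gamma R$ on each stoichiometric compatibility class, regardless of kinetics, by a standard argument: distinct points that map to the same value under $\Gamma R$ would generate a nontrivial invariant line segment on which the flow preserves the difference vector, contradicting strict contraction of the norm. Concretely, suppose for contradiction that for some choice of general kinetics the map $\Gamma R$ is \emph{not} injective on some stoichiometric class $x + \mbox{Im}(\Gamma)$. Then there are two distinct equilibria $a \neq b$ of $\dot{x} = \Gamma R(x)$ lying in the same class (equivalently, we can add a constant vector $c \in \mbox{Im}(\Gamma)$ to realize two equilibria — cf.\ Lemma~\ref{lem:concordance and kernel}, which exactly characterizes when $b - a$ arises as an element of $Z_\Gamma$). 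Since $a$ and $b$ are both fixed points, $\phi_t(a) = a$ and $\phi_t(b) = b$ for all $t \geq 0$, hence $|\phi_t(b) - \phi_t(a)| = |b - a|$ for all $t$; but weak contractivity demands $|\phi_t(b) - \phi_t(a)| < |b - a|$ for $t > 0$ whenever $a \neq b$, a contradiction. Therefore $\Gamma R$ must be injective on every stoichiometric compatibility class for every choice of kinetics, which is precisely the definition of concordance.

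First I would make precise the passage from ``non-injective for some kinetics'' to ``two equilibria in the same class.'' If $\Gamma R(a) = \Gamma R(b)$ with $a \neq b$, set $c = -\Gamma R(a) \in \mbox{Im}(\Gamma)$; then $\dot{x} = \Gamma R(x) + c$ has $a$ and $b$ as equilibria, and adding a constant from $\mbox{Im}(\Gamma)$ does not change the vector field's compatibility classes nor (one should check) the applicability of the non-expansivity/weak-contractivity hypothesis, since the flow of $\dot{x} = f(x) + c$ and that of $\dot{x} = f(x)$ differ by the same affine structure on each class. Alternatively, and more cleanly, one can avoid introducing $c$ entirely: if $\Gamma R(a) = \Gamma R(b)$, it need not be that $a, b$ are equilibria, so instead I would argue directly that the segment (or more carefully, the trajectory) exhibits no strict decrease. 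Actually the cleanest route: concordance as defined here only requires injectivity ``no matter the choice of general kinetics,'' and for a fixed kinetics a failure of injectivity gives $a \neq b$ with $\Gamma R(a) = \Gamma R(b)$; then the two \emph{constant} curves $\gamma_1(t) \equiv a$, $\gamma_2(t) \equiv b$ are not solutions unless $\Gamma R(a) = 0$, so I do need the shift by $c$ to turn the coincidence into a genuine pair of equilibria. I will therefore include the short verification that weak contractivity is inherited by $\dot x = \Gamma R(x) + c$ for $c \in \mbox{Im}(\Gamma)$ — this holds because on the affine class $x + \mbox{Im}(\Gamma)$ the added constant is a fixed translation in the tangent space and the norm comparison $|\phi_t(x) - \phi_t(y)|$ is unaffected by any common translation of the two trajectories.

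The main obstacle is this bookkeeping about the constant term $c$: one must confirm that ``weakly contractive'' (a hypothesis about $\dot x = \Gamma R(x)$) transfers to the inhomogeneous system $\dot x = \Gamma R(x) + c$ used to manufacture the two equilibria, and that such $c$ may be taken in $\mbox{Im}(\Gamma)$ so that $a, b$ genuinely share a stoichiometric compatibility class of the original network. Once that is in hand, the contradiction is immediate and requires no computation. I expect the final write-up to be only a few lines: invoke the definition of concordance, suppose $\Gamma R$ fails to be injective on a class for some kinetics, produce $a \neq b$ with $\Gamma R(a) = \Gamma R(b)$, pass to $\dot x = \Gamma R(x) + c$ with $c = -\Gamma R(a)$, note $a, b$ are equilibria so their trajectories are constant and their distance is preserved for all time, and contrast this with the strict inequality in the definition of weak contractivity.
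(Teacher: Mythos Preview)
Your proof is correct and follows the same idea as the paper: two distinct equilibria in the same stoichiometric class would maintain constant distance under the flow, contradicting the strict inequality in the definition of weak contractivity. You are more careful than the paper in making explicit the passage from non-injectivity of $\Gamma R$ to a genuine pair of equilibria via the shift $c = -\Gamma R(a)$; the cleanest way to see that weak contractivity survives this shift is that the constant cancels in $\tfrac{d}{dt}\bigl(x(t)-y(t)\bigr) = \Gamma R(x(t)) - \Gamma R(y(t))$, so the contraction estimate---which is governed by the Jacobian of $\Gamma R$ and is unchanged by adding $c$---applies verbatim to trajectories of the shifted system.
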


\begin{proof}
     If a reaction network is weakly contractive, then it cannot have two equilibria in the same stoichiometric compatibility class. Indeed, if both $a$ and $b$ are in $\mbox{Im}(\Gamma)$ and are equilibria we must have $\| \phi_t(a) - \phi_t(b) \|$ is constant, contradicting the definition of weakly contractive. Thus, the system must be concordant.
\end{proof}
These results are useful for guaranteeing we can find a choice of starting vector.

For reversible reaction $\Gamma_i$ let their perpendicular hyperplane to this reaction be $H_{i}$. The next proposition demonstrates that in the special case of reversible networks (i.e., every reaction is reversible), we have a simpler way of finding a vector that is guaranteed to be contained in our cone.

\begin{prop}
\label{prop:reversible network mixed vector}
    Suppose we have a reversible network which is monotone with respect to a cone $K$. Then $K$ must non-trivially intersect every nontrivial $\cap_{i \in A} H_{i}$ where $A$ is some subset of $1,2,...,n$ ($n$ is the number of reactions).

\end{prop}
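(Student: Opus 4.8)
The plan is to isolate one closure property of $K$ forced by monotonicity — invariance under orthogonal projection onto each reaction hyperplane — and then to manufacture a vector in $\bigcap_{i\in A}H_i$ by cyclically iterating these projections, which converge to the projection onto the intersection. Throughout write $\pi_i$ for the orthogonal projection of the ambient space onto $H_i=\Gamma_i^{\perp}$, so $\pi_i(v)=v-\frac{\Gamma_i^{\top}v}{\Gamma_i^{\top}\Gamma_i}\,\Gamma_i$, and set $S_A=\spans\{\Gamma_i:i\in A\}$, so that $\bigcap_{i\in A}H_i=S_A^{\perp}$.

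\emph{Step 1 (the key closure property).} I would first show that if $\Gamma$ is monotone with respect to $K$ then $\pi_i(v)\in K$ for every $v\in K$ and every (reversible) reaction $\Gamma_i$. Apply Lemma~\ref{lem:produce_new_points} to the single reaction $\Gamma_i$, splitting on the region of $v$. If $v\in Q_2(\Gamma_i)$ then $\Gamma_i^{\top}v=0$ and $\pi_i(v)=v\in K$. If $v\in Q_1^{+}(\Gamma_i)$, operation~1 gives $w=v-\alpha\Gamma_i\in K\cap Q_1(-\Gamma_i)$; since $\Gamma_i^{\top}x=\sum_j x_j(\Gamma_i)_j\ge 0$ on $Q_1(\Gamma_i)$ and $\le 0$ on $Q_1(-\Gamma_i)$ we get $\Gamma_i^{\top}v>0\ge\Gamma_i^{\top}w$; by convexity $[v,w]\subseteq K$, and the affine function $t\mapsto\Gamma_i^{\top}(v-t\Gamma_i)$ vanishes at $t=\Gamma_i^{\top}v/\Gamma_i^{\top}\Gamma_i$, which lies in $[0,\alpha]$ (the bound $t\le\alpha$ is exactly the inequality $\Gamma_i^{\top}w\le 0$); at that $t$ the point $v-t\Gamma_i$ equals $\pi_i(v)$, so $\pi_i(v)\in K$. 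The case $v\in Q_1^{-}(\Gamma_i)$ is symmetric via operation~2, and the case $v\notin Q_1(\Gamma_i)\cup Q_1(-\Gamma_i)$ uses the two vectors of operation~3 to exhibit a segment in $K$ straddling $H_i$ and passing through $\pi_i(v)$. (When $v\in\mathbb{R}\Gamma_i$ one simply has $\pi_i(v)=0\in K$.)

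\emph{Step 2 (chaining the projections).} Now fix $A=\{i_1,\dots,i_m\}$ with $S_A^{\perp}=\bigcap_{i\in A}H_i\ne\{0\}$, i.e.\ $S_A$ a proper subspace. Since $K$ is proper it is not contained in $S_A$, so pick $v\in K\setminus S_A$ and define $v_0=v$, $v_{k+1}=\pi_{\,i_{(k\bmod m)+1}}(v_k)$. By Step~1 every $v_k\in K$. Orthogonal projections are norm non-increasing, so $|v_k|$ decreases to a limit $L$; and because $S_A^{\perp}\subseteq H_i$ for each $i\in A$, every $\pi_i$ fixes $S_A^{\perp}$ pointwise, hence the $S_A^{\perp}$-component of $v_k$ never changes and $|v_k|\ge|P_{S_A^{\perp}}v|=\mathrm{dist}(v,S_A)>0$, so $L>0$. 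Passing to a subsequence with indices $\equiv 0\ (\mathrm{mod}\ m)$ converging to some $\bar v\in K$ ($K$ closed), we have $|\bar v|=L$; comparing $|v_{k+1}|\to L$ with $v_{k+1}\to\pi_{i_1}(\bar v)$ forces $|\pi_{i_1}(\bar v)|=|\bar v|$, hence $\bar v\in H_{i_1}$, and running one full cycle the same way gives $\bar v\in H_{i_l}$ for all $l$. Thus $\bar v\in K\cap\bigcap_{i\in A}H_i$ is nonzero, as claimed. (Equivalently: $K$ closed and $\pi_i$-invariant for each $i\in A$ is invariant under the iterated cyclic composition, hence — by the von Neumann/Halperin alternating-projection theorem — under $P_{S_A^{\perp}}$, and $P_{S_A^{\perp}}v\ne 0$.)

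The real content sits in Step~1: it is not enough that the operations of Lemma~\ref{lem:produce_new_points} push \emph{some} vector across $H_i$; what makes the argument work is that the particular minimal-displacement vector they deposit in $K$ has the orthogonal projection point lying on the resulting segment, which is precisely what lets the projections compose cleanly. The only subtlety in the hypotheses is the choice of $v\in K\setminus S_A$ — this is where ``$\bigcap_{i\in A}H_i$ nontrivial'' plus $K$ being a proper cone in the relevant ambient space is used; in the case $K\subseteq\im(\Gamma)$ one runs the entire argument inside $\im(\Gamma)$, taking each $\pi_i$ to be the orthogonal projection onto $\Gamma_i^{\perp}\cap\im(\Gamma)$, and every formula and estimate above is unchanged.
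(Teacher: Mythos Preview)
Your proof is correct and follows essentially the same approach as the paper: both first show that $K$ is invariant under the orthogonal projection $\pi_i$ onto each $H_i$ (the paper via Lemma~\ref{lem:subtract_off}, you via the equivalent Lemma~\ref{lem:produce_new_points}), and both then invoke the von~Neumann/Halperin alternating-projection theorem to pass to $\bigcap_{i\in A}H_i$. Your version is in fact more careful than the paper's on the non-triviality point---the paper simply asserts the projection onto the intersection lies in $K$ without spelling out why it is nonzero, whereas you explicitly choose $v\in K\setminus S_A$ and track the preserved $S_A^{\perp}$-component.
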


\begin{proof}
    From Lemma \ref{lem:subtract_off} we can conclude that for any vector $v \in K$ and any $i$ the projection of $v$ onto $H_i$ is also in $K$. Indeed, if $ \Gamma_i^{\top} v  > 0$ then $v \in Q_1^+(\Gamma_i)$ or $v \not\in Q_1(\Gamma_i) \cup Q_1(-\Gamma_i)$. If $v \in Q_1^+(\Gamma_i) $ then by Lemma \ref{lem:subtract_off} we can find $v'$ such that $v-v' = \alpha \Gamma_i$. Since the cone is convex every convex combination of $v$ and $v'$ is in our cone. Since $ \Gamma_i^{\top} v  > 0$ and $\Gamma_i^{\top} v' \leq 0$ we must be able to find a convex combination $v''$ of $v$ and $v'$ such that $ \Gamma_i^{\top} v''  = 0$. The vector $v''$ is the projection of $v$. A similar reasoning applies if $v \in Q_1^-(\Gamma_i)$ or $v \not\in Q_1(\Gamma_i) \cup Q_1(-\Gamma_i)$. By \cite{Halperin} Theorem 1, since we can use our projections as many times as we want, and our cone is closed, we in fact must also have the projection onto any intersection of the $H_i$ must also be in our cone.
\end{proof}

Thus, using the previous proposition, we can simply intersect enough hyperplanes until their intersection is 1-dimensional. This will give us our desired vector.

\section{Algorithm termination conditions}

In this section we describe conditions that guarantee a given reaction network is not non-expansive or monotone for any choice of norm or cone, respectively. These conditions will be used to terminate our algorithm, once they are fulfilled.

For the following we will want an observation about checking the non-expansivity of systems. Asking if we can find a norm ball $B \subset \mbox{Im}(\Gamma)$ for which the reaction network is non-expansive is equivalent to asking if a lifted network $\hat{\Gamma}$ is monotone with respect to a "lifted" cone of $B$ (this is by Theorem 4 in \cite{duvall2024interplay}), call it $B'$. Now asking whether $B'$ is closed under the operations of Lemma \ref{lem:produce_new_points} is the same as asking if $B$ is closed under these operations. Thus, to test for non-expansivity, we simply check whether there exists a norm ball $B \subset \mbox{Im}(\Gamma)$ which is closed under the operations.

For arbitrary vector $v$ we refer to \textit{operation 4} as simply producing the vector $-v$ (i.e., $o_4(v) = -v$). For the rest of the section assume we are given a reaction network $\Gamma.$

\begin{lem}
\label{lem:termination condition nonexpansive}
    Suppose we can pick a nonzero vector $v \in \mbox{Im}(\Gamma)$ and apply operations 1-4 to arrive at the vector $\alpha v$ for $\alpha > 1$. Then our reaction network $\Gamma$ is not non-expansive for any norm.
\end{lem}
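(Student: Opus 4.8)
The plan is to argue by contradiction: use Theorem \ref{cor:monotone iff subtract off} (together with the observation just before this lemma that testing non-expansivity amounts to finding a norm ball $B \subset \mbox{Im}(\Gamma)$ closed under operations 1--3) to turn non-expansivity into a closure statement about a bounded set, and then exploit positive homogeneity of the operations to produce an unbounded sequence inside that set.

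First I would record a scaling fact: operations 1--4 are positively homogeneous of degree one. Indeed, the regions $Q_1(\Gamma_i), Q_1^{\pm}(\Gamma_i), Q_2(\Gamma_i)$ are cones, so $v \in Q_1^+(\Gamma_i)$ implies $cv \in Q_1^+(\Gamma_i)$ for all $c>0$ (and similarly for the other cases); moreover the minimal scalar $\alpha$ in Lemma \ref{lem:produce_new_points} that sends $v$ into $Q_1(-\Gamma_i)$ (or $Q_1(\Gamma_i)$) simply rescales to $c\alpha$ for $cv$. Hence $o_i(cv) = c\, o_i(v)$ for $i \in \{1,2,3\}$, and operation 4 is trivially homogeneous. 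In particular, the applicability condition for each operation is scale-invariant, so if some finite sequence of operations 1--4 carries $v$ to $\alpha v$, the same sequence carries $cv$ to $\alpha c v$ for every $c>0$, and it may then be re-applied to $\alpha c v$.

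Now suppose, for contradiction, that $\Gamma$ is non-expansive for some norm. By the observation above we may take the unit ball $B$ to satisfy $B \subset \mbox{Im}(\Gamma)$, and by Theorem \ref{cor:monotone iff subtract off} the set $B$ is closed under operations 1--3; since a norm ball is symmetric, $B$ is also closed under operation 4, so $B$ is closed under operations 1--4. Because $B$ is a norm ball it is bounded and absorbs the nonzero vector $v \in \mbox{Im}(\Gamma)$, so we may pick $\beta > 0$ with $\beta v \in B$. Applying to $\beta v$ the operation sequence guaranteed by the hypothesis yields $\alpha \beta v \in B$; iterating $m$ times yields $\alpha^m \beta v \in B$ for all $m \geq 1$. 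Since $\alpha > 1$ and $v \neq 0$, we have $\|\alpha^m \beta v\| \to \infty$, contradicting the boundedness of $B$. Therefore no norm can make $\Gamma$ non-expansive.

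The only point requiring care — and it is mere bookkeeping rather than a genuine obstacle — is verifying that each operation's precondition is scale-invariant, so that the prescribed finite sequence can legitimately be re-run on the successively rescaled vectors; everything else follows immediately from Theorem \ref{cor:monotone iff subtract off} and the basic properties of a norm ball.
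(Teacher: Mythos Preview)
Your proof is correct and follows essentially the same approach as the paper's own proof: both use the positive homogeneity of operations 1--4 and the symmetry of a norm ball to iterate the hypothesized sequence and produce $\alpha^m$-scaled copies of $v$ inside $B$, contradicting boundedness. Your version is in fact slightly more careful, making explicit the absorbing step $\beta v \in B$ that the paper leaves implicit.
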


\begin{proof}

    First note that since norm balls are symmetric, they are in fact closed under operation 4. Also observe that a norm ball $B$ is closed under operations $1-4$ iff $\beta B$ for any $\beta > 0$. This is due to the fact that if an operation produces $f(v)$ from $v$, then it produces $\beta f(v) $ from $\beta v$.

    Now given that upon starting with $v$, we can produce $\alpha v$ from some sequence of operations, then we can repeat the same operations to find that $\alpha^2 v$ is also in our set. Repeating this we find $\alpha^n v$ is in our set for all positive integers $n$. Thus, our set is unbounded, contradicting that a norm ball should be a bounded set.
\end{proof}

Now moving away from norm balls and back to cones, we have the following termination condition.

\begin{lem}
\label{lem:termination condition monotone}

    Suppose given a starting vector $v$, we can apply operations 1-3 to produce a finite set of vectors $\{k_i\}$ that generate a cone $K$. Suppose that $K$ contains one of our reaction vectors $\Gamma_i$ (or, if it reversible, a nonzero multiple of it) such that $\Gamma_i$ is not an extreme vector of the cone. Then $\Gamma$ is not monotone with respect to any cone that contains $v$.

\end{lem}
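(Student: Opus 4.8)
The plan is to argue by contradiction. Suppose $\Gamma$ is monotone with respect to some cone $\hat K$ containing $v$; I will show this forces $\Gamma_i$ to be an extreme vector of $\hat K$, which is impossible because $\Gamma_i$ already fails to be extreme in the smaller cone $K$.

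First I would descend from $\hat K$ to $K$. Operations $1,2,3$ are exactly the moves of Lemma~\ref{lem:produce_new_points}, so every cone with respect to which $\Gamma$ is monotone is closed under them; starting from $v\in\hat K$ and applying them keeps us in $\hat K$, hence all $k_i\in\hat K$ and $K=\cone\{k_i\}\subseteq\hat K$. In particular $\Gamma_i\in K\subseteq\hat K$. Next I would push non-extremality upward: since $K$ is polyhedral and $\Gamma_i$ is not extreme in $K$, write $\Gamma_i=a+b$ with $a,b\in K$, neither a scalar multiple of $\Gamma_i$; as $a,b\in\hat K$ the same identity shows $\Gamma_i$ is not extreme in $\hat K$. (If the hypothesis instead supplies a nonzero multiple $c\Gamma_i\in K$ of a reversible reaction, run everything below with $c\Gamma_i$ in place of $\Gamma_i$, using that reversible reactions are treated symmetrically under $v\mapsto-v$.)

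The core is then to show that a single reaction vector that is monotone with respect to a pointed convex cone and lies in that cone must be extreme in it. After replacing $\hat K$ by $\hat K\cap\mathrm{Im}(\Gamma)$ (Lemma~\ref{lem:intersect a cone}) and restricting to an invariant subspace so that $\hat K$ becomes proper (the corollary to Theorem~\ref{thm:monotone}; when the R-graph is strongly connected, Lemma~\ref{lem: cone contains set with full dimension} gives $\mathrm{span}(\hat K)\supseteq\mathrm{Im}(\Gamma)$, so this is legitimate), Lemma~\ref{lem:one_reac_makes_strict_monotone} makes the single reaction $\Gamma_i$ monotone with respect to $\hat K$, and $\Gamma_i\neq0$ gives $\Gamma_i\notin Q_1(-\Gamma_i)$. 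Since $\Gamma_i\in\hat K$ we have $\Gamma_i^\top k'\ge0$ for all $k'\in\hat K^*$; inserting this into Lemma~\ref{lem:signs on dual faces} forces $\Gamma_i^\top k'=0$ for every boundary vector $k$ of $\hat K$ with $k\notin Q_1(-\Gamma_i)$ and every $k'\in k^*$. Apply this with $k=r$ an \emph{exposed} extreme ray of $\hat K$, exposed by $w\in\hat K^*\setminus\{0\}$ (so $\hat K\cap w^\perp=\mathbb R_{\ge0}r$): if $r\notin Q_1(-\Gamma_i)$, then $w\in r^*$ yields $\Gamma_i^\top w=0$, hence $\Gamma_i\in\hat K\cap w^\perp=\mathbb R_{\ge0}r$, so $\Gamma_i$ lies on the extreme ray $r$ --- contradicting non-extremality. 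Thus every exposed extreme ray of $\hat K$ lies in the closed convex set $Q_1(-\Gamma_i)$; since $\hat K$ is the closed conic hull of its exposed extreme rays (Krein--Milman for cones plus Straszewicz density), $\hat K\subseteq Q_1(-\Gamma_i)$, contradicting $\Gamma_i\in\hat K\setminus Q_1(-\Gamma_i)$.

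The real obstacle is this last step: upgrading ``boundary vector'' to ``extreme vector'' without assuming $\hat K$ polyhedral. For polyhedral $\hat K$ it is precisely the second half of Lemma~\ref{lem:reaction vectors extremal vectors} and is immediate; in general one must handle non-exposed faces, which is why I route through exposed extreme rays, a density argument, and the convexity and closedness of $Q_1(-\Gamma_i)$. Since the algorithm only ever builds polyhedral cones, one may alternatively restrict attention to polyhedral $\hat K$ and cite Lemma~\ref{lem:reaction vectors extremal vectors} directly. Minor additional care is needed for the reduction to a proper cone when $\dim\hat K<\dim\mathrm{Im}(\Gamma)$, and for a degenerate ``pure degradation'' irreversible reaction, where $Q_1(-\Gamma_i)=\mathbb R^n$ and the final contradiction must be read off directly from the sign pattern of $\partial R_i$ instead.
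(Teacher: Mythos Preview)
Your argument follows the same skeleton as the paper's: any cone $\hat K$ with respect to which $\Gamma$ is monotone and which contains $v$ must be closed under operations 1--3 (Theorem~\ref{cor:monotone iff subtract off}), hence contains $K$; non-extremality of $\Gamma_i$ passes from $K$ to the larger $\hat K$; and then one invokes Lemma~\ref{lem:reaction vectors extremal vectors} for the contradiction. The paper's proof is just those three sentences.

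Where you diverge is in the last step. The paper simply cites Lemma~\ref{lem:reaction vectors extremal vectors}, whose ``extreme vector'' conclusion is stated only for polyhedral cones, and does not comment on whether $\hat K$ need be polyhedral. You noticed this and supplied a general-cone argument via exposed rays and Straszewicz density, which is correct and does close that gap. That said, your detour through $\hat K\cap\mathrm{Im}(\Gamma)$ and the ``restrict to an invariant subspace so $\hat K$ becomes proper'' step are not needed: Lemma~\ref{lem:reaction vectors extremal vectors} and Lemma~\ref{lem:signs on dual faces} apply to any closed pointed convex cone in the ambient space, and the relevant sign conditions already follow from $\Gamma_i\in\hat K$ without cutting down to the stoichiometric subspace. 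So your proof is correct and slightly more general than the paper's at the cost of some unnecessary scaffolding; as you yourself note, for the algorithm's purposes one may restrict to polyhedral $\hat K$ and quote Lemma~\ref{lem:reaction vectors extremal vectors} directly, which is exactly what the paper does.
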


\begin{proof}

    Note that any cone $K'$ which contains $v$, must also contain $K$, since $K'$ is closed under operations 1-3 by Theorem \ref{cor:monotone iff subtract off}. Thus, if $\Gamma_i$ is not an extreme vector of $K$, then it is not an extreme vector of $K'$ (while still having $\Gamma_i \in K'$). Thus, $\Gamma$ cannot be monotone with respect to $K'$ by Lemma \ref{lem:reaction vectors extremal vectors}, and so $\Gamma$ is not monotone with respect to any cone which contains $v$.

\end{proof}

\subsection{Convergence of the procedure}

Up to this point, it is still possible that our procedure of building a polyhedral figure from our operations continues indefinitely. Here we establish some results that we can guarantee our procedure will converge (in a precise sense) to a cone or norm for which a given reaction network is monotone or non-expansive, respectively.

\begin{lem}
\label{lem:intersect cones valid}
    If a network $\Gamma$ is monotone with respect to a set of cones $\{K_i\}$ (i.e., for each $i$, $\Gamma$ is monotone with respect to $K_i$). Then if $K = \cap_i K_i$ is nonempty, $\Gamma$ is also monotone with respect to $K$.
\end{lem}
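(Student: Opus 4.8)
The plan is to argue directly from the flow-level definition of monotonicity rather than from the infinitesimal (cross-positivity) criterion of Theorem \ref{thm:monotone}. Going through the infinitesimal criterion would force us to describe $\mbox{bd}(K)$ and $K^* = \overline{\sum_i K_i^*}$ in terms of the data of the individual $K_i$, which is awkward; the statement ``the flow preserves the order induced by $K$'' is by contrast almost immediate once one unwinds the definitions.

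First I would verify that $K = \cap_i K_i$ is a legitimate ordering cone in the sense used throughout the paper. An arbitrary intersection of closed convex cones is again a closed convex cone, and an intersection of pointed cones is pointed, since $K \cap -K \subseteq K_1 \cap -K_1 = \{0\}$. Because each $K_i$ is closed and nonempty it contains $0$, so $K$ always contains $0$ as well; the hypothesis that $K$ be nonempty is really there only to guarantee the induced order is nontrivial, and the argument below does not even need it. Thus $K$ induces a partial order on the phase space $\mathbb{R}^n_{\geq}$ via $x \leq_K y \iff y - x \in K$.

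Next, take any $x,y$ in the phase space with $x \leq_K y$, i.e. $y - x \in K = \cap_i K_i$. Then $y - x \in K_i$ for every $i$, so by monotonicity of $\Gamma$ with respect to each $K_i$ we get $\phi_t(y) - \phi_t(x) \in K_i$ for all $t \geq 0$ and all $i$. Intersecting over $i$ yields $\phi_t(y) - \phi_t(x) \in \cap_i K_i = K$, that is $\phi_t(x) \leq_K \phi_t(y)$. Since $x,y$ and $t \geq 0$ were arbitrary, $\Gamma$ is monotone with respect to $K$, which is exactly the claim.

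I do not anticipate a genuine obstacle: the only points that merit a sentence of care are checking that $K$ stays closed and pointed (so that ``monotone with respect to $K$'' is meaningful under the paper's standing conventions on cones) and noting that, should one want the restricted/Corollary version, $K$ need not retain full dimension inside $\mbox{Im}(\Gamma)$ — but the statement as phrased only asserts order preservation under the flow, which holds verbatim.
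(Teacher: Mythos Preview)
Your proof is correct but takes a different route from the paper's. The paper argues via Theorem~\ref{cor:monotone iff subtract off}: since $\Gamma$ is monotone with respect to each $K_i$, each $K_i$ is closed under the operations of Lemma~\ref{lem:produce_new_points}; hence any operation applied to $k \in K = \cap_i K_i$ lands back in every $K_i$ and therefore in $K$, so $K$ is closed under the operations and $\Gamma$ is monotone with respect to it. You instead work directly at the flow level, which is more elementary and would apply verbatim to any dynamical system monotone for each $K_i$, not only to reaction networks with the operations machinery. The paper's route, by contrast, keeps everything phrased in terms of the operations characterization that drives the algorithm in the surrounding sections, so it integrates more smoothly with what follows (e.g., the minimal-cone corollary and the convergence lemmas), even though it is logically a longer path to the same conclusion.
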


\begin{proof}
    Assume $K$ is nonempty and take $k \in K$. By Theorem \ref{cor:monotone iff subtract off} we have that each $K_i$ is closed under the operations of Lemma \ref{lem:produce_new_points}. Hence, any operation we perform on $k$ will produce a new vector $k'$ that again must be in all $K_i$, and so $k' \in K$. Therefore, $K$ is closed under the operations of \ref{lem:produce_new_points}, and thus $\Gamma$ must be monotone with respect to $K$.
\end{proof}

\begin{lem}
\label{lem:increasing set converge hausdorff}
    Suppose we have an increasing sequence of closed convex sets $C_i$ such that $\cup_{i=1}^{\infty} C_i$ is bounded. Then $\cup_{i=1}^{n} C_i$ converges to $C = \overline{\cup_{i=1}^{\infty} C_i}$ in the Hausdorff metric.
\end{lem}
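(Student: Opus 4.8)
The plan is to show that the increasing union $\cup_{i=1}^n C_i$ converges in the Hausdorff metric to $C = \overline{\cup_{i=1}^\infty C_i}$ by directly estimating the two one-sided Hausdorff distances. Recall that the Hausdorff distance between two sets $A, B$ is $\max\{\sup_{a\in A}\operatorname{dist}(a,B),\ \sup_{b\in B}\operatorname{dist}(b,A)\}$. Writing $D_n = \cup_{i=1}^n C_i$, the sequence $D_n$ is nested increasing, so every $D_n \subseteq C$; hence one of the two one-sided distances, $\sup_{x\in D_n}\operatorname{dist}(x, C)$, is identically zero, and the whole content of the lemma is the other direction: we must show $\sup_{y\in C}\operatorname{dist}(y, D_n) \to 0$ as $n\to\infty$.

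The first step is to observe that it suffices to control $\operatorname{dist}(y, D_n)$ for $y$ ranging over $\cup_{i=1}^\infty C_i$ itself (the dense subset of $C$), since an arbitrary $y\in C$ is a limit of such points and $\operatorname{dist}(\cdot, D_n)$ is $1$-Lipschitz. Then I would argue by contradiction: suppose $\sup_{y\in C}\operatorname{dist}(y, D_n)$ does not go to $0$. Then there is $\varepsilon > 0$ and, for infinitely many $n$, a point $y_n \in \cup_i C_i$ with $\operatorname{dist}(y_n, D_n) \ge \varepsilon$. Since $\cup_i C_i$ is bounded, pass to a convergent subsequence $y_{n_k} \to y^* \in C$. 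Each $y_{n_k}$ lies in some $C_{m_k}$; but for any fixed index $j$, once $n \ge j$ we have $C_j \subseteq D_n$, so if some tail of the $y_{n_k}$ all lay in a single $C_j$ we would get a contradiction immediately. The subtlety is that the $m_k$ may tend to infinity, so I instead use the limit point: since $y^* \in C = \overline{\cup_i C_i}$, there is some $j$ and some $z\in C_j$ with $|z - y^*| < \varepsilon/2$, and for $k$ large enough $|y_{n_k} - y^*| < \varepsilon/2$ and $n_k \ge j$, whence $z \in D_{n_k}$ and $\operatorname{dist}(y_{n_k}, D_{n_k}) \le |y_{n_k} - z| < \varepsilon$, contradicting $\operatorname{dist}(y_{n_k}, D_{n_k}) \ge \varepsilon$.

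I expect the main (though modest) obstacle to be handling precisely the case where the witnessing points $y_n$ escape every fixed $C_j$ — i.e., making sure the argument does not implicitly assume the union stabilizes. This is exactly what the boundedness hypothesis is for: it guarantees a convergent subsequence of witnesses, and then density of $\cup_i C_i$ in $C$ lets us approximate the limit from inside a single $C_j$, which is eventually swallowed by $D_n$. The closedness and convexity of the $C_i$ are not really needed for this argument beyond ensuring $C$ is a sensible closed set; the proof goes through for any increasing sequence of sets with bounded union, with $C$ the closure of the union. If the author prefers, one can phrase the whole thing without contradiction: given $\varepsilon$, cover the compact set $C$ by finitely many balls of radius $\varepsilon/2$ centered at points of $\cup_i C_i$, take $N$ large enough that $D_N$ contains all these finitely many centers, and conclude $\sup_{y\in C}\operatorname{dist}(y, D_n) \le \varepsilon$ for all $n \ge N$; this is the cleaner write-up and I would present it that way.
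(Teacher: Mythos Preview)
Your proposal is correct, and your second (``cleaner'') write-up is essentially the same compactness argument the paper uses: pick a finite $\varepsilon$-net for the compact set $C$, then choose $n$ large enough that $D_n$ captures it. The one genuine difference is where the finite net lives. The paper selects its finite set $\{f_j\}$ from $C$ itself, so the $f_j$ need not belong to any $C_i$; it then argues that each $f_j$ is $\varepsilon$-close to $C_i$ for large $i$ and invokes convexity of $C_i$ to push the whole convex hull of the $f_j$'s within $2\varepsilon$ of $C_i$. You instead take the centers from the dense subset $\cup_i C_i$, so each center already sits in some $C_{j}$ and is literally contained in $D_N$ once $N$ is large; no convexity is needed. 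Your observation that closedness and convexity of the $C_i$ play no essential role is therefore correct for your argument, whereas the paper's version, as written, does lean on convexity. Your route is the more elementary one; the paper's route has the minor advantage that one need not first check that an $\varepsilon$-net can be chosen inside the dense subset (though this is immediate).
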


\begin{proof}
    Take a finite set of points $\{f_i\}$ from $C$ such that every point in $C$ is less than $\epsilon$ away from at least one point in the convex closure of $f_i$. Note that every point in $C$ is eventually less than epsilon from a point in $C_i$ for sufficiently large $i$, and so we can pick $i$ large enough so that every point from the finite set $\{f_i\}$ is less than $\epsilon$ from $C_i$. Due to convexity, this implies every point in $C$ is less than distance $\epsilon$ from a point in $C_i$.
\end{proof}

\begin{cor}
    A reaction network $\Gamma$ which is monotone with respect to a cone which contains a vector $v$ has a minimal cone $K$ which it is monotone with respect to, such that $K$ is contained in all other monotone cones that contain $v$.
\end{cor}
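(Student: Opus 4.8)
The plan is to construct the minimal cone as the intersection of all monotone cones containing $v$, and then verify this intersection is itself a monotone cone containing $v$ (so it is the minimal one), using the closure-under-operations characterization from Theorem \ref{cor:monotone iff subtract off} together with the Hausdorff-convergence machinery of Lemma \ref{lem:increasing set converge hausdorff}. First I would let $\mathcal{K}$ denote the (nonempty, by hypothesis) collection of all closed convex pointed cones $K$ such that $v \in K$ and $\Gamma$ is monotone with respect to $K$, and set $K_{\min} = \bigcap_{K \in \mathcal{K}} K$. Since each $K \in \mathcal{K}$ contains $v$, so does $K_{\min}$, hence $K_{\min}$ is nonempty; it is closed and convex as an intersection of closed convex sets, and pointed since it is contained in a pointed cone.

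The key step is to show $\Gamma$ is monotone with respect to $K_{\min}$. By Theorem \ref{cor:monotone iff subtract off} this is equivalent to showing $K_{\min}$ is closed under operations $1$--$3$ of Lemma \ref{lem:produce_new_points}. But this follows in exactly the manner of Lemma \ref{lem:intersect cones valid}: if $k \in K_{\min}$ then $k$ lies in every $K \in \mathcal{K}$, each of which is closed under the operations, so any vector produced by an operation on $k$ again lies in every $K \in \mathcal{K}$, hence in $K_{\min}$. Therefore $K_{\min} \in \mathcal{K}$, and by construction $K_{\min} \subseteq K$ for every other monotone cone $K$ containing $v$, which is the claim. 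In fact one does not even need Lemma \ref{lem:increasing set converge hausdorff} for existence — an arbitrary (possibly infinite) intersection suffices — but the Hausdorff lemma is what justifies that the algorithmic procedure of successively building polyhedral cones actually approximates $K_{\min}$: one takes the increasing union of the finitely-generated cones produced at each stage, observes it is bounded below in the sense of sitting inside some fixed monotone cone, and applies Lemma \ref{lem:increasing set converge hausdorff} on a compact cross-section to get Hausdorff convergence of the closures.

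The main obstacle is a subtle set-theoretic point: $\mathcal{K}$ may be a proper class or at least unwieldy, and more importantly one must be careful that the intersection of infinitely many monotone cones is still \emph{closed} and still a genuine cone of the type our framework permits (closed, convex, pointed) — closedness and the cone property under scaling are immediate, and pointedness is inherited, so this is routine but worth stating. A second point needing care is the claim that $K_{\min}$ is actually \emph{achieved} by the procedure rather than merely approximated: strictly speaking the corollary as stated only asserts existence of the minimal cone, so it suffices to exhibit $K_{\min}$ abstractly as above; the convergence remark via Lemma \ref{lem:increasing set converge hausdorff} is then an addendum explaining why the algorithm is meaningful. I would present the abstract intersection argument as the proof proper and mention the Hausdorff convergence as the reason the construction is constructive.
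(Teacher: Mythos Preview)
Your proposal is correct and takes essentially the same approach as the paper: form the intersection of all monotone cones containing $v$ and invoke Lemma~\ref{lem:intersect cones valid} (which you effectively re-prove inline via Theorem~\ref{cor:monotone iff subtract off}) to conclude the intersection is again a monotone cone. The paper's proof is a one-liner citing Lemma~\ref{lem:intersect cones valid} directly; your additional checks on closedness, convexity, and pointedness, and your remarks on Lemma~\ref{lem:increasing set converge hausdorff}, are accurate but not needed for the statement as written.
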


\begin{proof}
    Take the intersection of all the cones that contain $v$, this is a nonempty cone and by Lemma \ref{lem:intersect cones valid} we are also monotone with respect to this cone.
\end{proof}

For the rest of this section, whenever we say a sequence of sets is converging we mean it converges with respect to the Hausdorff metric.

\begin{lem}
    Suppose a network $\Gamma$ is monotone with respect to a minimal cone $K$ containing a vector $v$. Suppose we take a bounded cross section $C$ of $K$ using an affine hyperplane $H$. Then the bounded cross sections $C_i = K_i \cap H$ of cones $K_i$ produced by our procedure converge to $C = K \cap H$.
\end{lem}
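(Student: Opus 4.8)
The plan is to deduce the statement from Lemma~\ref{lem:increasing set converge hausdorff}, applied to the sections $C_i$, once two facts are established: that the minimal cone $K$ equals the closure of the increasing union $\bigcup_i K_i$, and that intersecting with $H$ commutes with this closure. For the first fact, recall that the cones $K_i$ produced by the procedure are polyhedral and nested, $\cone(v)=K_1\subseteq K_2\subseteq\cdots$, with $K_{i+1}$ generated by $K_i$ together with the vectors output by operations~$1$--$3$ of Lemma~\ref{lem:produce_new_points}. Every monotone cone containing $v$ is closed under these operations by Theorem~\ref{cor:monotone iff subtract off}, and every generator of every $K_i$ arises from $v$ by finitely many operations, so each $K_i$---hence $\bigcup_i K_i$, hence $\overline{\bigcup_i K_i}$---lies inside every monotone cone containing $v$, in particular inside $K$. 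For the reverse inclusion, $K':=\overline{\bigcup_i K_i}$ is a closed convex cone containing $v$, and I would argue it is monotone, i.e.\ closed under operations~$1$--$3$: for an already-generated vector this is ensured by the (fair) procedure, and for a limit vector $k=\lim_n k_n$ with $k_n\in\bigcup_i K_i$ it follows from the continuity of each $o_j$---these maps are piecewise linear in their argument---since then $o_j(k)=\lim_n o_j(k_n)\in K'$ as $K'$ is closed. Minimality of $K$ then gives $K\subseteq K'$, so $K=\overline{\bigcup_i K_i}$.

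Next I would set up the sections. A bounded cross section means $H$ is an affine hyperplane with $0\notin H$ and $K\cap H$ bounded; write $H=\{x:\ w^{\top}x=1\}$. If $K\cap H\neq\emptyset$ and $w^{\top}k_0=0$ for some $k_0\in K\setminus\{0\}$, then for any $p\in K\cap H$ the whole ray $p+\mathbb{R}_{\geq 0}\,k_0$ would lie in $K\cap H$, contradicting boundedness; so, after replacing $w$ by $-w$ if needed (using that $K\setminus\{0\}$ is connected, indeed convex since $K$ is pointed), $w^{\top}k>0$ for every $k\in K\setminus\{0\}$. Then each $C_i=K_i\cap H$ is closed, convex, and bounded (being contained in $C$), the sequence $(C_i)$ is increasing, each $C_i$ is nonempty (it contains $v/(w^{\top}v)$), and $\bigcup_i C_i\subseteq C$ is bounded. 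Lemma~\ref{lem:increasing set converge hausdorff} then gives that $\bigcup_{i=1}^n C_i=C_n$ converges in the Hausdorff metric to $\overline{\bigcup_i C_i}$, so it remains to identify this limit with $C$.

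The inclusion $\overline{\bigcup_i C_i}\subseteq C$ is immediate from $\bigcup_i C_i=\big(\bigcup_i K_i\big)\cap H\subseteq K\cap H=C$. For the reverse, fix $p\in C$ and use the first step to pick $q_n\in\bigcup_i K_i$ with $q_n\to p$; since $0\notin H$ we have $p\neq 0$, so $q_n\neq 0$ and $w^{\top}q_n\to 1>0$ for large $n$. The radial projection $\pi(q)=q/(w^{\top}q)$ is continuous on $K\setminus\{0\}$, so $\pi(q_n)\to\pi(p)=p$; moreover $\pi(q_n)$ is a positive multiple of $q_n$, hence lies in the same $K_j$ that contains $q_n$, and $\pi(q_n)\in H$, so $\pi(q_n)\in\bigcup_i C_i$. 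Thus $p\in\overline{\bigcup_i C_i}$, which gives $C\subseteq\overline{\bigcup_i C_i}$ and therefore $C_n\to C$. The main obstacle is the first step, and within it the claim that the closed cone $\overline{\bigcup_i K_i}$ is again closed under operations~$1$--$3$: this forces one to check that the piecewise-linear maps $o_j$ glue continuously across the regions $Q_1^{+}(\Gamma_j)$, $Q_1^{-}(\Gamma_j)$, $Q_2(\Gamma_j)$ and the mixed region, a short but genuinely necessary case analysis. Everything past that is soft, the only device being the radial projection $\pi$, which works precisely because all of $K_1,K_2,\dots$ and $K$ are cones.
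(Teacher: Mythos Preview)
Your proof is correct and rests on the same core device as the paper---reduce to Lemma~\ref{lem:increasing set converge hausdorff} applied to the nested sections $C_i$---but you carry out considerably more of the argument than the paper does in this lemma. The paper's proof here is essentially two sentences: it observes that the $C_i$ are increasing and bounded (since each $K_i\subseteq K$) and invokes Lemma~\ref{lem:increasing set converge hausdorff} to conclude $C_i\to\overline{\bigcup_i C_i}$, writing this limit as $C$ without explicitly identifying it with $K\cap H$. That identification is in effect deferred: the subsequent lemma shows that the cone over the limit set is itself monotone (via the same continuity-of-operations argument you sketch), after which minimality of $K$ forces equality. Your version folds all of this into a single self-contained argument---first proving $\overline{\bigcup_i K_i}=K$ directly from minimality plus closure under operations, then using the radial projection $\pi(q)=q/(w^{\top}q)$ to transport that equality down to the hyperplane section. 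Both routes are sound; yours is more explicit about where the actual work lies, and your flagging of the continuity of $o_1,o_2,o_3$ across the region boundaries $Q_1^{\pm}(\Gamma_j),Q_2(\Gamma_j)$ as the one genuine case check is exactly right (the paper simply asserts this continuity without comment in the next lemma's proof).
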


\begin{proof}
    First we will work with a cross section of the cone $K$. Every iteration of our procedure gives us a new vector in our cone, which corresponds bijectively to a new point in the bounded convex cross section of the cone (all the convex sets we consider next will be in this cross section).

    Suppose our procedure produces an increasing sequence of convex sets $C_i$ (the sequence is increasing since each iteration can only add new points to our convex set). By Lemma \ref{lem:increasing set converge hausdorff} this increasing sequence must converge in Hausdorff distance to $C = \overline{\cup_{i=1}^{\infty} C_i}$.
\end{proof}

\begin{lem}
    Suppose given a starting vector $v$ the procedure generates a sequence of cones $K_i$ with cross sections $C_i$ which converge to $C$. Then the network $\Gamma$ is monotone with respect to the cone $K$ generated by $C$.
\end{lem}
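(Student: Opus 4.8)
The plan is to show that the limiting cone $K$ generated by $C$ is closed under the operations of Lemma \ref{lem:produce_new_points}, and then invoke Theorem \ref{cor:monotone iff subtract off}. The key point is that each operation $o_i$ is, in an appropriate sense, continuous: if $k \in K$ and we apply operation $i$, the resulting vector(s) $o_i(k)$ are obtained from $k$ and a fixed reaction vector $\Gamma_i$ by a rule that depends continuously on $k$ (the scalars $\alpha$, $\alpha_1$, $\alpha_2$ solving the relevant boundary conditions $v \pm \alpha \Gamma_i \in \mbox{bd}(Q_1(\pm\Gamma_i))$ vary continuously with $v$, since these are determined by linear equations in the coordinates).

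First I would fix $k \in K$ and pick a sequence $k_j \in K_{i(j)}$ (points in the cones produced by the procedure) with $k_j \to k$; such a sequence exists because $C_i \to C$ in the Hausdorff metric and $K$ is the cone generated by $C$, so $K = \overline{\cup_i K_i}$ on the relevant cross section and hence everywhere by homogeneity. Next, for each operation index $\ell \in \{1,2,3\}$, I would observe that since $K_{i(j)}$ is produced by the procedure and the procedure is closed under the operations (by construction, the procedure keeps applying $o_\ell$ until closure, or in the limit every $o_\ell(k_j)$ is captured by some later $K_m$), we have $o_\ell(k_j) \subseteq \overline{\cup_m K_m}$. Taking $j \to \infty$ and using continuity of $o_\ell$, the limit $o_\ell(k)$ lies in $\overline{\cup_m K_m} = K$. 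I would need to handle the three cases of Lemma \ref{lem:produce_new_points} separately only insofar as operation 3 is two-valued, but the continuity argument is identical for each branch; I would also note the membership region ($Q_1^+(\Gamma_i)$, etc.) of $k_j$ may differ from that of $k$ only when $k$ lies on a boundary $H_i$, in which case the operation degenerates to the identity or a projection, and the limiting vector is still $k$ or a convex combination already in $K$.

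Having shown $o_\ell(k) \subseteq K$ for all $k \in K$ and all $\ell \in \{1,2,3\}$, the cone $K$ is closed under the operations of Lemma \ref{lem:produce_new_points}, and Theorem \ref{cor:monotone iff subtract off} gives that $\Gamma$ is monotone with respect to $K$.

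The main obstacle I expect is the boundary-degeneracy issue: when $k \in K$ lies on a region boundary $\mbox{bd}(Q_1(\pm\Gamma_i))$, the "minimal $\alpha$" in Lemma \ref{lem:produce_new_points} can jump (it is only upper semicontinuous in general), so the naive limit $\lim_j o_\ell(k_j)$ need not equal $o_\ell(k)$ — it could be a vector further along the ray $k - \alpha\Gamma_i$. However, this actually helps rather than hurts: the limit vector still lies in $K$, and since $K$ is convex and contains $k$, it also contains the shorter vector $o_\ell(k)$ on the segment between them. So the argument goes through, but this point deserves an explicit sentence rather than being swept under "continuity."
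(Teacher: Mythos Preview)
Your proposal is correct and follows essentially the same approach as the paper: approximate $k\in K$ by points $k_j$ in the procedure-generated cones, use continuity of the operations $o_\ell$ to pass to the limit, and conclude via Theorem~\ref{cor:monotone iff subtract off}. Your explicit treatment of the boundary-degeneracy case (where the minimal $\alpha$ is only upper semicontinuous, but convexity of $K$ still yields $o_\ell(k)\in K$) is in fact more careful than the paper's proof, which simply asserts that ``these procedures are all continuous.''
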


\begin{proof}
    To show that $\Gamma$ is monotone with respect to $K$, we will argue that $C$ is closed under the operations of Lemma \ref{lem:produce_new_points}. Note that for $c \in C_i$ any operation on $c$ will again produce a point in $K_i$, since this is how we built our increasing convex sets. Now for any point $c \in C$ we can find a sequence of $c_i \in C_{k_i}$ such that $c_i$ converges to $c$. Let $f(c)$ be one of our procedures from Lemma \ref{lem:produce_new_points}. Note these procedures are all continuous. Thus, $f(c_i) \in C$ converges to $f(c)$, and since $K$ is closed we must have $f(c) \in K$ as well. Thus, the cone generated by $C$ is a valid cone closed under the operations of Lemma \ref{lem:produce_new_points}. Hence, by Theorem \ref{cor:monotone iff subtract off}, $\Gamma$ is indeed monotone with respect to the cone generated by $C$.
\end{proof}

\section{Procedure for monotonicity and contractivity}

Now we can write down a procedure which checks for the existence of a lifted cone or a cone contained in the stoichiometric compatibility class, for which our network is monotone. 

\begin{enumerate}
    \item Choose a starting vector (once for non-expansivity once for monotonicity)
    \begin{enumerate}
        \item For non-expansivity, pick the vector to simply be any vector including all species (if the R-graph is connected, any choice would work)
        \item For monotonicity, we use Corollary \ref{cor:choosing initial vector}.
        
    \end{enumerate}
   
    \item Construct additional vectors using Lemma \ref{lem:produce_new_points}
    \item Stop if stopping conditions of either Lemma \ref{lem:termination condition monotone} or Lemma \ref{lem:termination condition nonexpansive} are reached
\end{enumerate}

The choice for the non-expansive starting vector is to prevent some possible degenerate balls from appearing. For example, the network $A \Leftrightarrow B$ and $C \Leftrightarrow D$ has $[1,0,0,0]^{\top},[0,1,0,0]^{\top}$ satisfying the operations for Lemma \ref{lem:produce_new_points}. If the R-graph is strongly connected, the choice of initial vector is less critical, since all species eventually show up in the operations. We also note that our procedure for non-expansivity shares similarities with the procedure for polyhedral Lyapunov functions described in \cite{7402687} and \cite{blanchini2021structuralpolyhedralstabilitybiochemical}.

For the following let $f_i(v)$ be operation $i$ performed on point $v$. If operation $i$ does not apply to $v$ then $f_i(v) = v$. We will refer to the assumptions in Lemma \ref{lem:termination condition nonexpansive} as the non-expansive termination conditions, and the assumptions in Lemma \ref{lem:termination condition monotone} as the monotone termination conditions. Since sometimes the procedure may only converge to a desired object, we apply a heuristic in this scenario. For testing for non-expansivity, we will run the procedure a number of times before taking each extreme vertex in the final figure, and replacing it by a point $\epsilon > 0$ away with rational coordinates $n/m$, where $|m|$ is as small as possible. If we have a cone which is converging to the desired figure, we replace each vector by a nearby vector such that the coordinates are rational multiples of each other (again an $\epsilon >0$ away from the true vector). In practice this may take some trial and error to obtain the desired object. Often the procedure terminates in finitely many steps without need for these heuristics.

\begin{algorithm}[H]
\caption{Procedure for non-expansivity}\label{alg:nonexpansive}
\begin{algorithmic}

\Require {Reaction Network $\Gamma$, maxIterations = n}
\State Vertices = $\{v_1\}$ where $v_1$ has all nonzero coordinates for species involved in the reaction network
\State Iterations $= 1$
\While {Vertices not closed under operations 1-4 and Iterations $\leq n$}
\For {each v $\in$ Vertices}
    \State Vertices $\gets$ Vertices $\cup$ $\{f_1(v)\}$
    \State Vertices $\gets$ Vertices $\cup$ $\{f_2(v)\}$
    \State Vertices $\gets$ Vertices $\cup$ $\{f_3(v)\}$
    \State Vertices $\gets$ Vertices $\cup$ $\{f_4(v)\}$
\EndFor

\State Remove vertices in Vertices which are are not extreme points

\If {Non-expansive termination conditions hold}

    \Return(``System is not non-expansive for any norm")
\EndIf

\State Iterations $\gets$ Iterations $+ 1$

\If {Vertices closed under operations 1-4}

    \State {Ball norm found}
    
    \Return Vertices
\EndIf

\EndWhile

\If {Iterations $> $ n}
    \State Vertices = Approximate rational coordinates (Vertices,$M$,$\epsilon$)
\EndIf

\If {Vertices closed under operations 1-4}

    \State {Ball norm found}
    
    \Return Vertices
\EndIf

\end{algorithmic}
\end{algorithm}

\begin{algorithm}[H]
\caption{Approximating rational coordinates}\label{alg:approximate rationals}
\begin{algorithmic}

\Require {Set of vertices $V$, maxDenominator = M, maxDistance = $\epsilon$}
\For {each v $\in V$ }
    \For{$m \in \{1,...,M\}$}
        \If{$\|mv -\floor{mv}\|_2 < \epsilon$}
            \State $V \leftarrow V \setminus v \cup\frac{ \floor{mv}}{m}$
        \EndIf
    \EndFor
\EndFor
\Return $V$
\end{algorithmic}
\end{algorithm}

\begin{algorithm}[H]
\caption{Picking a starting vector}\label{alg:pick starting vector}
\begin{algorithmic}

\Require Reaction Network $\Gamma$

\If {$\Gamma$ is non-expansive}

    \Return {$\Gamma$ is non-expansive}
\EndIf

\State Check for subnetworks $\Gamma'$ and $\Gamma''$ such that $\Gamma'$ is not non-expansive, and $\Gamma''$ is a subnetwork of $\Gamma'$, concordant, non-expansive, and $\mbox{Im}(\Gamma') \setminus \mbox{Im}(\Gamma'') \neq \emptyset$

\Return Any $v \in Z_{\Gamma''} \cap \mbox{Im}(\Gamma') \setminus  \mbox{Im}(\Gamma'')$

\end{algorithmic}
\end{algorithm}

\begin{algorithm}[H]

\caption{Procedure for monotonicity}\label{alg:monotonicity}

\begin{algorithmic}
\Require Reaction Network $\Gamma$, maxIterations = n

\State Pick a starting vector $k$
\State Extremals = $\{k\}$

\While {Extremals not closed under operations 1-3 and Iterations $\leq $ maxIterations}
\For {each v $\in$ Vertices}
    \State Extremals $\gets$ Extremals $\cup$ $\{f_1(v)\}$
    \State Extremals $\gets$ Extremals $\cup$ $\{f_2(v)\}$
    \State Extremals $\gets$ Extremals $\cup$ $\{f_3(v)\}$
\EndFor

\State Remove vectors in Extremals which are are not extreme vectors

\If {Monotone termination conditions hold}

    \Return(``System is not monotone for any norm")
\EndIf

\State Iterations $\gets$ Iterations $+ 1$

\If {Extremals closed under operations 1-3}

    \State {Ball norm found}
    
    \Return Extremals
\EndIf

\EndWhile

\If {Iterations $> $ maxIterations}

    \State Vectors = Approximate rational vectors (Extremals,$M$,$\epsilon$)
\EndIf

\If {Extremals closed under operations 1-3}

    \State {Ball norm found}
    
    \Return Extremals
\EndIf

\end{algorithmic}
\end{algorithm}

In summary, these algorithms provide a constructive strategy for determining whether a given network $\Gamma$ is monotone or non-expansive, and they often either produce a certifying cone or norm or conclude that no such cone/norm exists.

\section{Duality }

Note that since in our procedures we are essentially building figures vertex by vertex, it might be sometimes quicker to build the vertices of the dual figure. For example, while an $n$-cube has $2^n$ vertices, its dual polytope has only $2n$ vertices. For this reason here we prove a theorem regarding a reaction network and its dual. If we are given an irreversible reaction network with stoichiometric matrix $\Gamma$ the \textit{dual network} is the irreversible network with stoichiometric matrix $\Gamma^{\top}$. Note any reaction network can be turned into an equivalent irreversible reaction network, by simply writing any reversible reactions as two irreversible reactions.

\begin{lem}
\label{lem:transpose isomorphism}
    Given a linear map $A: \mathbb{R}^n \rightarrow \mathbb{R}^n$ let $V$ be the image of $A$ and $V^{\top}$ be the image of $A^{\top}$. Then the linear map $\hat{A}:V^{\top} \rightarrow V$, defined by $\hat{A}(v) = Av$ for $v \in V^{\top}$, is an isomorphism.
\end{lem}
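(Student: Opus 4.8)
The plan is to reduce the statement to two standard facts of linear algebra: that $A$ and $A^{\top}$ have the same rank, and that $\ker(A) = \im(A^{\top})^{\perp}$. First I would observe that $\hat{A}$ is well defined and linear: for $v \in V^{\top}$ the vector $Av$ lies in $\im(A) = V$ by definition, and $\hat{A}$ is just the restriction of the linear map $A$ to the subspace $V^{\top}$, hence linear. So it remains to check that this restriction is a bijection from $V^{\top}$ onto $V$.

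The key step is injectivity. Suppose $v \in V^{\top}$ with $\hat{A}(v) = Av = 0$, i.e. $v \in \ker(A)$. Using the identity $\ker(A) = \im(A^{\top})^{\perp} = (V^{\top})^{\perp}$ (which follows from $\langle Av, w\rangle = \langle v, A^{\top}w\rangle$ for all $w$), we get $v \in V^{\top} \cap (V^{\top})^{\perp} = \{0\}$. Hence $\hat{A}$ has trivial kernel and is injective.

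Finally I would invoke dimension counting: $\dim V^{\top} = \mbox{rank}(A^{\top}) = \mbox{rank}(A) = \dim V$, so $V^{\top}$ and $V$ are finite-dimensional vector spaces of the same dimension. Since $\hat{A}: V^{\top} \to V$ is an injective linear map between spaces of equal finite dimension, its image is a subspace of $V$ of dimension $\dim V^{\top} = \dim V$, so $\hat{A}$ is also surjective onto $V$. Therefore $\hat{A}$ is an isomorphism. I do not anticipate any genuine obstacle here; the only point requiring a word of justification is the adjoint identity $\ker(A) = \im(A^{\top})^{\perp}$, and everything else is routine.
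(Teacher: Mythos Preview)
Your proposal is correct and follows essentially the same approach as the paper: prove injectivity of the restriction, then use equal dimensions (rank of $A$ equals rank of $A^{\top}$) to conclude bijectivity. The only cosmetic difference is that the paper verifies injectivity by the concrete computation $x^{\top} A A^{\top} x = (A^{\top} x)^{\top}(A^{\top} x) = 0$ for $y = A^{\top} x \in \ker(A)$, whereas you quote the equivalent standard fact $\ker(A) = \mbox{Im}(A^{\top})^{\perp}$; both are the same orthogonality argument.
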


\begin{proof}
    Consider $y \in V^{\top}$ such that $Ay = 0$. Then there exists $x$ such that $y = A^{\top} x$. Then we must have $x^{\top} A A^{\top} x = (A^{\top} x)^{\top}(A^{\top} x) = 0$ or $y = A^{\top} x = 0$. Thus, $A$ is injective on $V^{\top}$, and since both have the same dimension the mapping is a bijection, and so also an isomorphism.
\end{proof}

The following lemma is related to theorem 3 \cite{9433449}. This theorem shows that when a reaction network in concentrations admits a polyhedral norm $V(x)$, that the dual reaction network in rates admits the dual polyhedral norm function $V^*(z)$.   

\begin{lem}
\label{lem:dual networks monotone}
    Suppose the network $\Gamma$ is monotone with respect to a cone $K \subseteq \mbox{Im}(\Gamma)$. Then the dual network is monotone with respect to $\Gamma^{\top} K^*$.
\end{lem}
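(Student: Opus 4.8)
The plan is to translate both monotonicity assertions into cross-positivity of the relevant Jacobians (Theorem~\ref{thm:monotone}, applied inside the invariant subspaces $\mathrm{Im}(\Gamma)$ and $\mathrm{Im}(\Gamma^{\top})$, exactly as in the Corollary following it) and then exploit a transpose symmetry between the admissible Jacobian factors of a kinetics for $\Gamma$ and of one for $\Gamma^{\top}$. First I would reduce, as the text allows, to an irreversible network and set $\tilde K:=\Gamma^{\top}K^*$. Two facts are needed about $\tilde K$. Its dual is $\tilde K^*=\{w\in\mathbb R^m:\Gamma w\in K\}$: indeed $w\in(\Gamma^{\top}K^*)^*$ iff $(\Gamma w)^{\top}y\ge 0$ for all $y\in K^*$, i.e.\ iff $\Gamma w\in K^{**}=K$. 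And $\tilde K$ is a full-dimensional, pointed, closed cone inside $S:=\mathrm{Im}(\Gamma^{\top})$: it is full-dimensional because $K^*$ is full-dimensional in $\mathbb R^n$ and $\Gamma^{\top}$ maps $\mathbb R^n$ onto $S$, it is pointed because $K$ is full-dimensional in $\mathrm{Im}(\Gamma)$, and it is closed provided $K$ is polyhedral (otherwise one passes to its closure); here Lemma~\ref{lem:transpose isomorphism} identifies $S$ with $\mathrm{Im}(\Gamma)$ and in particular gives $\dim S=\dim\mathrm{Im}(\Gamma)$. This makes the Corollary following Theorem~\ref{thm:monotone} applicable to the dual system.

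Next, using power-law kinetics exactly as in the proofs of Lemma~\ref{lem:subtract_off} and Lemma~\ref{lem:concordance and kernel}, I would record that the Jacobian factors $\partial R(x)$ realizable by general kinetics of $\Gamma$ at interior points are precisely the $m\times n$ matrices whose $(i,j)$-entry is positive when $(\Gamma_i)_j<0$ and zero otherwise (boundary points give only subsets of this), so that $\Gamma$ is monotone with respect to $K$ iff $k_2^{\top}\Gamma M\,k_1\ge 0$ for every such $M$ and all $k_1\in K$, $k_2\in K^*$ with $k_2^{\top}k_1=0$. The identical computation for $\Gamma^{\top}$ shows the admissible Jacobian factors $\partial R'(z)$ of the dual are exactly the $n\times m$ matrices whose $(j,i)$-entry is positive when $(\Gamma_i)_j<0$ and zero otherwise, that is, exactly the transposes $M^{\top}$ of the matrices $M$ above. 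Hence the dual is monotone with respect to $\tilde K$ iff $\tilde k_2^{\top}\Gamma^{\top}M^{\top}\tilde k_1\ge 0$ for every such $M$ and all $\tilde k_1\in\tilde K$, $\tilde k_2\in\tilde K^*$ with $\tilde k_2^{\top}\tilde k_1=0$.

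The verification is then immediate. Fix $M$ and admissible $\tilde k_1,\tilde k_2$; write $\tilde k_1=\Gamma^{\top}y_1$ with $y_1\in K^*$, and set $p:=\Gamma\tilde k_2\in K$ (using the description of $\tilde K^*$). Then $p^{\top}y_1=(\Gamma\tilde k_2)^{\top}y_1=\tilde k_2^{\top}\tilde k_1=0$ and
\[
\tilde k_2^{\top}\Gamma^{\top}M^{\top}\tilde k_1=(\Gamma\tilde k_2)^{\top}M^{\top}\Gamma^{\top}y_1=p^{\top}M^{\top}\Gamma^{\top}y_1=y_1^{\top}(\Gamma M)\,p .
\]
Setting $k_1:=p\in K$ and $k_2:=y_1\in K^*$, we have $k_2^{\top}k_1=0$ and $\Gamma M=\mathcal{J}_f(x)$ for the matching kinetics of $\Gamma$, so monotonicity of $\Gamma$ with respect to $K$ gives $y_1^{\top}(\Gamma M)p=k_2^{\top}\mathcal{J}_f(x)k_1\ge 0$. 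Since $M,\tilde k_1,\tilde k_2$ were arbitrary, the dual network is monotone with respect to $\tilde K=\Gamma^{\top}K^*$.

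The hard part is the middle step: establishing rigorously that ``monotone under every general kinetics'' is captured exactly by cross-positivity against every matrix with the prescribed sign pattern, and that the sign-pattern classes for $\Gamma$ and for $\Gamma^{\top}$ are genuine transposes of one another, so that $M\mapsto M^{\top}$ is a bijection between them. The boundary cases, where a reactant is absent, only shrink these classes, so the interior description is the binding one; spelling this out carefully, together with the minor topological bookkeeping for $\Gamma^{\top}K^*$ (closedness, full-dimensionality, pointedness), is where the effort lies.
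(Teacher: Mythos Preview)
Your proposal is correct and follows essentially the same approach as the paper: both reduce the cross-positivity condition for the dual Jacobian $\Gamma^{\top}(\partial R')$ against $(\tilde K,\tilde K^*)$ to the cross-positivity condition for the primal Jacobian $\Gamma\,\partial R$ against $(K,K^*)$ via a transpose identity. The paper packages the description of $\tilde K^*$ through the isomorphism of Lemma~\ref{lem:transpose isomorphism}, writing dual vectors as $\Gamma^{-1}n$ for $n\in K$, whereas you write $\tilde K^*=\{w:\Gamma w\in K\}$ directly; these are equivalent, and your central computation $\tilde k_2^{\top}\Gamma^{\top}M^{\top}\tilde k_1=y_1^{\top}(\Gamma M)p$ is the same identity the paper uses. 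Your treatment is in one respect more careful: the paper simply writes the dual Jacobian as $\Gamma^{\top}(\partial R)^{\top}$ without justification, while you explicitly argue that the admissible $\partial R'$ for the dual network are exactly the transposes of the admissible $\partial R$ for the primal, which is the substantive point underlying that step.
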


\begin{proof}
    Note that if $v \in K^*$ then it satisfies $ n^{\top} v \geq 0$ for all $n \in K$. We have that for $ \Gamma^{\top} v\in \Gamma^{\top} K^*$, thinking of $\Gamma$ as a map from $\mbox{Im}(\Gamma^{\top})$ to $\mbox{Im}(\Gamma)$ (which, by Lemma \ref{lem:transpose isomorphism} is an isomorphism and so has an inverse), that
    
    \[ (\Gamma^{\top} v)^{\top} (\Gamma)^{-1} n  = v^{\top} \Gamma (\Gamma)^{-1} n  =  v^{\top} n  \geq 0\] 

    Thus, the vectors in $(\Gamma^{\top} K^*)^*$ are exactly of the form $(\Gamma)^{-1} n$ for $n \in K$. Note that the Jacobian for the dual network (recall we make the network into an irreversible network, before taking the dual) is of the form $\Gamma^{\top} (\partial R)^{\top}$. Now our monotonicity condition is of the form 
    \[((\Gamma)^{-1} n)^{\top} \Gamma^{\top} (\partial R)^{\top} \Gamma^{\top} v = n^{\top}  (\partial R)^{\top} \Gamma^{\top} v =  v^{\top}   \Gamma \partial R n \geq 0.\]

    The last inequality is due to the fact $n \in K$ and $v \in K^*$, so this is the inequality for monotonicity with respect to $K$ in our original system (or contractivity).
\end{proof}

The exact same argument can be used to show if $\Gamma$ is non-expansive for some norm ball $B$, then $\Gamma^{\top}$ is non-expansive for $\Gamma^{\top} B^*$, where $B^*$ is the dual figure. Also, while we worked with irreversible reactions, the same argument works if all reaction are reversible. As an example, take a reversible reaction network with stoichiometric matrix 
\[
\begin{bmatrix}
    -1 & 1 & 0 & 1 \\
    1 & 1 & 1 & 0 \\
    0 & 0 & -1 & -1 
\end{bmatrix}
\]
 It is non-expansive with respect to the octahedron with vertices the columns of the following matrix:

 \[
\begin{bmatrix}
    -1 & 1 & 0 & 0 & 0 & 0 \\
    0 & 0 & 1 & -1 & 0 & 0\\
    0 & 0 & 0 & 0  & -1 & 1
\end{bmatrix}
\]
This is dual to a cube with vertices the columns of the following matrix:
 \[
\begin{bmatrix}
    -1 & 1 & -1 & 1 & -1 & 1 & -1 & 1\\
    -1 & -1 & 1 & 1 & -1 & -1 & 1 & 1\\
    -1 & -1 & -1 & -1  & 1 & 1 & 1 & 1
\end{bmatrix}
\]
Now we have that
\[
\begin{bmatrix}
    -1 & 1 & 0  \\
    1 & 1 & 0 \\
    0 & 1 & -1 \\
    1 & 0 & -1
\end{bmatrix}
\begin{bmatrix}
    -1 & 1 & -1 & 1 & -1 & 1 & -1 & 1\\
    -1 & -1 & 1 & 1 & -1 & -1 & 1 & 1\\
    -1 & -1 & -1 & -1 & 1 & 1 & 1 & 1
\end{bmatrix} =\begin{bmatrix}
    0 & -2 & 2 & 0 & 0 & -2 & 2 & 0\\
    -2 & 0 & 0 & 2 & -2 & 0 & 0 & 2\\
    0 & 0 & 2 & 2 & -2 & -2 & 0 & 0 \\
    0 & 2 & 0 & 2 & -2 & 0 & -2 & 0
\end{bmatrix}.
\]
One can verify the dual network is in fact non-expansive for the columns of this last matrix, by verifying the conditions in Lemma \ref{lem:subtract_off}.

\section{Three worked out examples}

Below, we present three examples illustrating different outcomes of our procedure: (1) a system that is not monotone under any cone; (2) a system that fails non-expansivity for any norm yet does have a monotone cone; and (3) a system from \cite{doi:10.1080/14689360802243813} where we recover a known cone. By Theorem \ref{thm:strongly connected to strongly monotone}, since these networks all have strongly connected R-graphs, they are not only monotone with respect to the cones found, but also strongly monotone.

\subsection{Example 1}
Consider the network (taken from \cite{doi:10.1080/14689360802243813} section 8.7) 
\[
A + B \Leftrightarrow C, A \Leftrightarrow B, 2A \Leftrightarrow C.
\]
This network has stoichiometric matrix
\[
\begin{bmatrix}
    -1 & -1 & -2 \\
    -1 & 1 & 0 \\
    1 & 0 & 1
\end{bmatrix}
\]

First we will verify it is not non-expansive for any norm. Since the  We will take our starting vector as $[2,0,1]^\top$ and using Theorem \ref{cor:monotone iff subtract off} (which applies equally well to norm balls). We have the following sequence:

\[
\begin{bmatrix} -2 \\ 0 \\ 1\end{bmatrix} \xrightarrow[]{+ \begin{bmatrix} 2 \\ 2 \\ -2\end{bmatrix}} \begin{bmatrix} 0 \\ 2 \\ -1\end{bmatrix} \xrightarrow[]{+ \begin{bmatrix} -2 \\ 0 \\ 1\end{bmatrix}} \begin{bmatrix} -2 \\ 2 \\ 0\end{bmatrix} \xrightarrow[]{+ \begin{bmatrix} -2 \\ -2 \\ 2\end{bmatrix}} \begin{bmatrix} -4 \\ 0 \\ 2\end{bmatrix} 
\]

Note that $2 * [-2,0,1]^\top = [-4,0,2]^\top $ and thus by the non-expansivity termination condition (Lemma \ref{lem:termination condition nonexpansive}) our system is not non-expansive for any norm (in fact, the subnetwork consisting just of $ [-2,0,1]^\top$ and $[-1,-1,1]^\top$ is also not non-expansive).

Now we will test for monotonicity. This system is 2 dimensional so we want to look for a 1 dimensional concordant and non-expansive subnetwork $\Gamma'$, for example the network with just $[-2,0,1]^{\top}$. We will take as our starting vector $[-1,3,1] \in Z_{\Gamma'}$ (here we use Corollary \ref{cor:choosing initial vector}). Now taking this as our starting point, we have

\[
\begin{bmatrix} -1 \\ 3 \\ -1 \end{bmatrix} \xrightarrow[]{+ \begin{bmatrix} 1 \\ 1 \\ -1\end{bmatrix}} \begin{bmatrix} 0 \\ 4 \\ -2 \end{bmatrix} \xrightarrow[]{+ \begin{bmatrix} 4 \\ -4 \\ 0\end{bmatrix}} \begin{bmatrix} 4 \\ 0 \\ -2 \end{bmatrix}.
\]

Note that $[-1,-1,1] = -1/4 ([4,0,-2] + [0,4,-2])$ (so we terminate by Lemma \ref{lem:termination condition monotone}). Thus, we are not monotone with respect to any cone, answering the question posed in \cite{doi:10.1080/14689360802243813} in section 8.7.

\subsection{Example 2}
Consider the network
\[
A \Leftrightarrow B + D, B \Leftrightarrow C \Leftrightarrow D, C \Leftrightarrow \emptyset.
\]
This reaction network has stoichiometric matrix
\[
\begin{bmatrix}
    -1 & 0 & 0 & 0 \\
    1 & -1 & 0 & 0 \\
    0 & 1 & -1 & -1 \\
    1 & 0 & 1 & 0
\end{bmatrix}
\]

First we will verify this network is not non-expansive for any norm (note that any subnetwork is in fact non-expansive). Taking $[0,0,1,0]$ as our starting point we have

\[
\begin{bmatrix} 0 \\ 0 \\ 1 \\ 0 \end{bmatrix} \xrightarrow[]{+ \begin{bmatrix} 0 \\ 0 \\ -1 \\ 0\end{bmatrix}} \begin{bmatrix} 0 \\ 0 \\ 0 \\ 1 \end{bmatrix} \xrightarrow[]{+ \begin{bmatrix} 1 \\ -1 \\ 0 \\ -1\end{bmatrix}} \begin{bmatrix} 1 \\ -1 \\ 0 \\ 0 \end{bmatrix} \xrightarrow[]{+ \begin{bmatrix} 0 \\ 1 \\ -1 \\ 0\end{bmatrix}} \begin{bmatrix} 1 \\ 0 \\ -1 \\ 0\end{bmatrix} \xrightarrow[]{+ \begin{bmatrix} 0 \\ 0 \\ 1 \\ 0\end{bmatrix}} \begin{bmatrix} 1 \\ 0 \\ 0 \\ 0\end{bmatrix} \xrightarrow[]{+ \begin{bmatrix} -1 \\ 1 \\ 0 \\ 1\end{bmatrix}} \begin{bmatrix} 0 \\ 1 \\ 0 \\ 1\end{bmatrix} \xrightarrow[]{+ \begin{bmatrix} 0 \\ -1 \\ 1 \\ 0\end{bmatrix}} \begin{bmatrix} 0 \\ 0 \\ 1 \\ 1\end{bmatrix} \xrightarrow[]{+ \begin{bmatrix} 0 \\ 0 \\ -1 \\ 1\end{bmatrix}} \begin{bmatrix} 0 \\ 0 \\ 2 \\ 0\end{bmatrix} .
\]

The ending vector is twice the starting vector, so by Lemma \ref{lem:termination condition nonexpansive} we are not non-expansive. Now we will check the network for monotonicity. The matrix has rank 4, so we want to find a 3 dimensional non-expansive and concordant subnetwork $\Gamma'$, such as $[0,-1,1,0],[0,0,-1,1],[0,0,-1,0]$. This network has $[1,0,0,0] \in Z_{\Gamma'}$.

\[
\begin{bmatrix} 1 \\ 0 \\ 0 \\ 0 \end{bmatrix} \xrightarrow[]{+ \begin{bmatrix} -1 \\ 1 \\ 0 \\ 1\end{bmatrix}} \begin{bmatrix} 0 \\ 1 \\ 0 \\ 1 \end{bmatrix} \xrightarrow[]{+ \begin{bmatrix} 0 \\ -1 \\ 1 \\ 0\end{bmatrix}} 
\begin{bmatrix} 0 \\ 0 \\ 1 \\ 1 \end{bmatrix} 
\xrightarrow[]{+ \begin{bmatrix} 0 \\ 0 \\ 1 \\ -1\end{bmatrix}} \begin{bmatrix} 0 \\ 0 \\ 2 \\ 0 \end{bmatrix} \xrightarrow[]{+ \begin{bmatrix} 0 \\ 2 \\ -2 \\ 0\end{bmatrix}} \begin{bmatrix} 0 \\ 2 \\ 0 \\ 0 \end{bmatrix}
\]
\[
\begin{bmatrix} 0 \\ 0 \\ 2 \\ 0 \end{bmatrix} \xrightarrow[]{+ \begin{bmatrix} 0 \\ 0 \\ -2 \\ 2\end{bmatrix}} \begin{bmatrix} 0 \\ 0 \\ 0 \\ 2 \end{bmatrix} \xrightarrow[]{+ \begin{bmatrix} 2 \\ -2 \\ 0 \\ -2\end{bmatrix}} \begin{bmatrix} 2 \\ -2 \\ 0 \\ 0 \end{bmatrix} \xrightarrow[]{+ \begin{bmatrix} 0 \\ 2 \\ -2 \\ 0\end{bmatrix}} \begin{bmatrix} 2 \\ 0 \\ -2 \\ 0 \end{bmatrix} \xrightarrow[]{+ \begin{bmatrix} 0 \\ 0 \\ 2 \\ -2\end{bmatrix}} \begin{bmatrix} 2 \\ 0 \\ 0 \\ -2 \end{bmatrix}
\]

One can verify that the cone generated by (note that all the column vectors appeared in the above computations) 
\[
\begin{bmatrix}
    0 & 0 & 0 & 1 & 2 & 2 & 2 \\
    2 & 0 & 0 & 0 & -2 & 0 & 0 \\
    0 & 2 & 0 & 0 & 0 & -2 & 0 \\
    0 & 0 & 2 & 0 & 0 & 0 & -2
\end{bmatrix}
\]
is closed under operations 1-3. Thus, by Theorem \ref{cor:monotone iff subtract off} our system is in fact monotone with respect to the cone generated by the columns of the above matrix.

\subsection{Example 3}

Consider Example 8.6 from \cite{doi:10.1080/14689360802243813}, which is 
\[
A + B \Leftrightarrow 2C, A \Leftrightarrow C, B \Leftrightarrow C.
\]
This network has stoichiometric matrix
\[
\begin{bmatrix}
    -1 & -1 & 0 \\
    -1 & 0 & -1 \\
    2 & 1 & 1
\end{bmatrix}
\]
Take the reaction vector $[0,-1,1]^{\top}$ as our subnetwork, note that in this case $[-2,1,1]^{\top} \in Z_{\Gamma'} \cap \mbox{Im}(\Gamma)$. We have

\[
\begin{bmatrix} -2 \\ 1 \\ 1 \end{bmatrix} \xrightarrow[]{+ \begin{bmatrix} 2 \\ 0 \\ -2\end{bmatrix}} \begin{bmatrix} 0 \\ 1 \\ -1 \end{bmatrix} \xrightarrow[]{+ \begin{bmatrix} -1 \\ -1 \\ 2\end{bmatrix}} \begin{bmatrix} -1 \\ 0 \\ 1 \end{bmatrix}.
\]

One can verify that the cone generated by $[0,1,-1]^{\top}$ and $[-1,0,1]^{\top}$ is closed under operations 1-3. Using our methods we have recovered the same cone described in \cite{doi:10.1080/14689360802243813}.

\section{Additional examples}

Below, we provide four more examples illustrating how our algorithm detects monotonicity (or non-expansivity) for additional and sometimes larger reaction networks. Some examples also demonstrate the duality relation or how one might systematically search for larger monotone systems.

\subsection{Example 1}
Consider the reaction network consisting of the three reactions
\[
13A \Leftrightarrow 11B + 7 C, 9B \Rightarrow 7A, 2A \Rightarrow B + 2C.
\]
This reaction network has stoichiometric matrix
\[ \Gamma = 
\begin{bmatrix}
    -13 & 7 & -2 \\
    11 & -9 & 1 \\
    7 & 0 & 2
\end{bmatrix}.
\]
It is monotone with respect to the cone generated by the columns of:

\[
\begin{bmatrix}
    -3 & 0 & 1 & 4 \\
    0 & 11 & 0 & -5 \\
    5 & 7 & 0 & 0 \\
\end{bmatrix}.
\]

We will call this cone $K$. We will also demonstrate our duality relationship on this irreversible network. We will separate the reaction $ 13A \Leftrightarrow 11B + 7 C$ into the two irreversible reactions $ 13A \Rightarrow 11B + 7 C$ and $13A \Leftarrow 11B + 7 C$. The stoichiometric matrix of our reaction network is now

\[ \Gamma = 
\begin{bmatrix}
    -13 & 13 & 7 & -2 \\
    11 & -11 & -9 & 1 \\
    7 & -7 & 0 & 2
\end{bmatrix}.
\]

The dual stoichiometric matrix is

\[
\Gamma^t 
= 
\begin{bmatrix}
    -13 & 11 & 7 \\
    13 & -11 & -7 \\
    7   & -9  & 0 \\
    -2  & 1   & 2
\end{bmatrix}.
\]

This matrix corresponds to the irreversible network $13A + 2D \Rightarrow 13B + 7C, 11B + 9C \Rightarrow 11A + D, 7B \Rightarrow 7A + 2D$. By Lemma \ref{lem:dual networks monotone} we conclude that this network is also monotone with respect to a cone. The dual of the cone $K$ is generated by the columns of:

\[
\begin{bmatrix}
0 & 0 & 55 & 5 \\
0 & -7 & -21 & 4 \\
1 & 11 & 33 & 3
\end{bmatrix}
\]

Now computing $\Gamma^t K^*$ we arrive at the cone generated by the columns of

\[
\Gamma^t K^* 
=
\begin{bmatrix}
7 & 0 & -715 & 0 \\
-7 & 0 & 715 & 0 \\
0 & 63 & 574 & -1 \\
2 & 15 & -65 & 0
\end{bmatrix}.
\]

This is a cone for which the dual network is monotone with respect to. Running Algorithm \ref{alg:monotonicity} we also arrive at an alternative cone for which our system is monotone generated by the columns of:

\[
\begin{bmatrix}
-539 & 0    & 0    & 11   & 6 \\
539  & 0    & 0    & -11  & -6 \\
0    & 1    & -49  & 0    & -7 \\
-157 & 0    & -12  & 1    & 0
\end{bmatrix}.
\]

\subsection{Example 2}

Consider the reaction network
\[
A \Rightarrow B + C, B \Rightarrow A + D, C \Rightarrow A + D, B + C + 2D \Rightarrow A.
\]

This reaction network has stoichiometric matrix
\[ \Gamma = 
\begin{bmatrix}
    -1 & 1 & 1 & 1 \\
    1 & -1 & 0 & -1 \\
    1 & 0 & -1 & -1 \\
    0 & 1 & 1 & -2
\end{bmatrix}
\]

By running Algorithm \ref{alg:monotonicity} we see that it is monotone with respect to the cone generated by the columns of:
\[
\begin{bmatrix}
    -2 & -1 & -1 & -1 & 0 & 0 & 0 & 1 \\
    1 & 0 & 0 & 1 & -1 & 0 & 0 & -1 \\
    1 & 0 & 1 & 0 & 0 & -1 & 0 & -1 \\
    0 & -2 & 1 & 1 & -1 & -1 & 2 & 0
\end{bmatrix}
\]

\subsection{Example 3}

Consider the reaction network

\[
A + B \Leftrightarrow C, A \Rightarrow C, B \Rightarrow C, B \Leftrightarrow A, A \Leftrightarrow \emptyset, B \Leftrightarrow \emptyset.
\]
This reaction network has its stoichiometric matrix
\[ \Gamma =
\begin{bmatrix}
    -1 & -1 & 0 & 1 & -1 & 0 \\
    -1 & 0 & -1 & -1 & 0 & -1 \\
    1 & 1 & 1 & 0 & 0 & 0 \\
\end{bmatrix}
\]

By running Algorithm \ref{alg:monotonicity} we see that it is monotone with respect to the cone generated by the columns of:
\[
\begin{bmatrix}
    -1 & 0 & 0 & 1 \\
    0 & -1 & 1 & 0 \\
    0 & 0 & -1 & -1
\end{bmatrix}
\]

We note that this is the same cone as in examples 8.1 and 8.3 of \cite{doi:10.1080/14689360802243813}. In these examples it is shown that the network $A + B \Leftrightarrow C, A \Leftrightarrow B$ is monotone with respect to this cone. Using our algorithm, we are able to automatically grow this network and find a larger monotone network as well.

\subsection{Example 4}

We give one more example where we have used our algorithm to find a monotone system with multiple reactions. We simply start with a system of one reaction such as $A+B \Leftrightarrow C$ and search through all other reactions with coefficients of $-2,-1,0,1$ or $2$, adding it to our system if the new system is still monotone. One such network we found is 

\[
 A + B \Leftrightarrow C, 2A \Rightarrow C, 2A \Rightarrow B, C \Leftrightarrow 2B, 2B \Rightarrow 2A + C, 2B \Rightarrow A + C, A \Leftrightarrow \emptyset, 2B \Leftrightarrow C, B \Rightarrow C
\]
It has stoichiometric matrix
\[ \Gamma =
\begin{bmatrix}
    -1 & -2 & -2 & 0 & 2 & 1 & -1 & 0 & 0  \\
    -1 & 0 & 1 & 2 & -2 & -2 & 0 & -2 & -1 \\
    1 & 1 & 0 & -1 & 1 & 1 & 0 & 1 & 1
\end{bmatrix}
\]

By running Algorithm \ref{alg:monotonicity} we see that it is monotone with respect to the cone generated by the columns of:
\[
\begin{bmatrix}
    -1 & 0 & 0 & 2 \\
    0 & -2 & 1 & 0 \\
    0 & 1 & -1 & -1
\end{bmatrix}
\]

In summary, these additional examples demonstrate how our procedure can handle a range of reaction networks—from small and relatively simple to more complex systems, and even automatically discover new monotone networks by systematically adding reactions.

\section{Conclusion}

We have demonstrated a procedure to computationally generate cones for which a given reaction network is monotone, or to determine no such cone exists. In the course of building the theory for the procedure, we demonstrate how certain monotonicity conditions really imply non-expansivity, how to check for non-expansivity, as well as how to rule out the possibility of a system being non-expansive. We also note a duality relationship, initially motivated by the possibility of speeding up computations.

\section{Acknowledgements}

The author would like to thank Eduardo D. Sontag, M. Ali Al-Radhawi, Murad Banaji, Dhruv D. Jatkar, Aria Masoomi and Francesco Bullo for helpful conversations on the topic. This work was partially supported by grants AFOSR FA9550-21-1-0289 and NSF/DMS-2052455.

\bibliographystyle{unsrt}
\bibliography{mybib}

\begin{thebibliography}{10}

\bibitem{1235373}
D.~Angeli and E.D. Sontag.
\newblock Monotone control systems.
\newblock {\em IEEE Transactions on Automatic Control}, 48(10):1684--1698, 2003.

\bibitem{10.1371/journal.pcbi.1004881}
Evgeni~V. Nikolaev and Eduardo~D. Sontag.
\newblock Quorum-sensing synchronization of synthetic toggle switches: A design based on monotone dynamical systems theory.
\newblock {\em PLOS Computational Biology}, 12(4):1--33, 04 2016.

\bibitem{SIMPSONPORCO201474}
John~W. Simpson-Porco and Francesco Bullo.
\newblock Contraction theory on riemannian manifolds.
\newblock {\em Systems \& Control Letters}, 65:74--80, 2014.

\bibitem{hu2024enforcing}
Zhongjie Hu, Claudio~De Persis, and Pietro Tesi.
\newblock Enforcing contraction via data.
\newblock {\em arxiv preprint arXiv:2401.07819}, 2024.

\bibitem{8854175}
Yu~Kawano, Bart Besselink, and Ming Cao.
\newblock Contraction analysis of monotone systems via separable functions.
\newblock {\em IEEE Transactions on Automatic Control}, 65(8):3486--3501, 2020.

\bibitem{KAWANO2022105358}
Yu~Kawano and Ming Cao.
\newblock Contraction analysis of virtually positive systems.
\newblock {\em Systems \& Control Letters}, 168:105358, 2022.

\bibitem{jafarpour2023monotonicity}
Saber Jafarpour and Samuel Coogan.
\newblock Monotonicity and contraction on polyhedral cones.
\newblock {\em arXiv preprint arXiv:2210.11576}, 2023.

\bibitem{angeligraphtheoretic}
David Angeli, Patrick De~Leenheer, and Eduardo Sontag.
\newblock Graph-theoretic characterizations of monotonicity of chemical networks in reaction coordinates.
\newblock {\em Journal of Mathematical Biology}, 61(4):581--616, 2010.

\bibitem{doi:10.1080/14689360802243813}
Murad Banaji.
\newblock Monotonicity in chemical reaction systems.
\newblock {\em Dynamical Systems}, 24(1):1--30, 2009.

\bibitem{BANAJI20131359}
Murad Banaji and Janusz Mierczyński.
\newblock Global convergence in systems of differential equations arising from chemical reaction networks.
\newblock {\em Journal of Differential Equations}, 254(3):1359--1374, 2013.

\bibitem{doi:10.1137/120898486}
Pete Donnell and Murad Banaji.
\newblock Local and global stability of equilibria for a class of chemical reaction networks.
\newblock {\em SIAM Journal on Applied Dynamical Systems}, 12(2):899--920, 2013.

\bibitem{Electrontransfernetworks}
Murad Banaji and Stephen Baigent.
\newblock Electron transfer networks.
\newblock {\em Journal of Mathematical Chemistry}, 43(4):1355--1370, 2008.

\bibitem{duvall2024interplay}
Alon Duvall, M.~Ali Al-Radhawi, Dhruv~D. Jatkar, and Eduardo Sontag.
\newblock Interplay between contractivity and monotonicity for reaction networks.
\newblock {\em arxiv preprint arXiv:2404.18734}, 2024.

\bibitem{7097666}
Muhammad Ali Al-Radhawi and David Angeli.
\newblock New approach to the stability of chemical reaction networks: Piecewise linear in rates lyapunov functions.
\newblock {\em IEEE Transactions on Automatic Control}, 61(1):76--89, 2016.

\bibitem{BLANCHINI20142482}
Franco Blanchini and Giulia Giordano.
\newblock Piecewise-linear lyapunov functions for structural stability of biochemical networks.
\newblock {\em Automatica}, 50(10):2482--2493, 2014.

\bibitem{SHINAR201292}
Guy Shinar and Martin Feinberg.
\newblock Concordant chemical reaction networks.
\newblock {\em Mathematical Biosciences}, 240(2):92--113, 2012.

\bibitem{WALCHER2001543}
Sebastian Walcher.
\newblock On cooperative systems with respect to arbitrary orderings.
\newblock {\em Journal of Mathematical Analysis and Applications}, 263(2):543--554, 2001.

\bibitem{a74ff163-269a-3282-9d4a-078934afa3dc}
Hans Schneider and Mathukumalli Vidyasagar.
\newblock Cross-positive matrices.
\newblock {\em SIAM Journal on Numerical Analysis}, 7(4):508--519, 1970.

\bibitem{mierczyński2012cooperativeirreduciblesystemsordinary}
Janusz Mierczyński.
\newblock Cooperative irreducible systems of ordinary differential equations with first integral.
\newblock {\em arXiv preprint arXiv:1208.4697}, 2012.

\bibitem{HIRSCH2006239}
Morris~W. Hirsch and Hal Smith.
\newblock Chapter 4 {M}onotone {D}ynamical {S}ystems.
\newblock In A.~Cañada, P.~Drábek, and A.~Fonda, editors, {\em Handbook of Differential Equations: Ordinary Differential Equations Volume 2}, volume~2 of {\em Handbook of Differential Equations: Ordinary Differential Equations}, pages 239--357. North-Holland, 2006.

\bibitem{9403888}
Saber Jafarpour, Pedro Cisneros-Velarde, and Francesco Bullo.
\newblock Weak and semi-contraction for network systems and diffusively coupled oscillators.
\newblock {\em IEEE Transactions on Automatic Control}, 67(3):1285--1300, 2022.

\bibitem{doi:10.1137/15M1034441}
Murad Banaji and Casian Pantea.
\newblock Some results on injectivity and multistationarity in chemical reaction networks.
\newblock {\em SIAM Journal on Applied Dynamical Systems}, 15(2):807--869, 2016.

\bibitem{sign_conditions_for_injectivity_of}
Stefan M{\"u}ller, Elisenda Feliu, Georg Regensburger, Carsten Conradi, Anne Shiu, and Alicia Dickenstein.
\newblock Sign conditions for injectivity of generalized polynomial maps with applications to chemical reaction networks and real algebraic geometry.
\newblock {\em Foundations of Computational Mathematics}, 16(1):69--97, 2016.

\bibitem{Halperin}
Israel Halperin.
\newblock The product of projection operators.
\newblock {\em Acta Sci. Math. (Szeged)}, 23(2):96--99, 1962.

\bibitem{7402687}
Franco Blanchini and Giulia Giordano.
\newblock Polyhedral lyapunov functions for structural stability of biochemical systems in concentration and reaction coordinates.
\newblock In {\em 2015 54th IEEE Conference on Decision and Control (CDC)}, pages 3110--3115, 2015.

\bibitem{blanchini2021structuralpolyhedralstabilitybiochemical}
Franco Blanchini, Carlos~Andrés Devia, and Giulia Giordano.
\newblock Structural polyhedral stability of a biochemical network is equivalent to finiteness of the associated generalised petri net.
\newblock {\em arxiv preprint arXiv:2109.01709}, 2021.

\bibitem{9433449}
Franco Blanchini and Giulia Giordano.
\newblock Dual chemical reaction networks and implications for lyapunov-based structural stability.
\newblock {\em IEEE Control Systems Letters}, 6:488--493, 2022.

\end{thebibliography}

\end{document}